\newtheorem{theorem}{Theorem}[section]
\newtheorem*{theorem1}{Theorem \ref{roughclassi}}
\newtheorem*{theorem2}{Theorem \ref{thm mukai times invol}}
\newtheorem{lemma}[theorem]{Lemma}
\newtheorem{proposition}[theorem]{Proposition}
\theoremstyle{definition}
\newtheorem{remark}[theorem]{Remark}
\newtheorem{example}[theorem]{Example}
\newtheorem{definition}[theorem]{Definition}
\title[K3-surfaces with involution and maximal symplectic symmetry]{Classification of K3-surfaces with involution\\ 
and maximal symplectic symmetry}
\author[Kristina Frantzen]{Kristina Frantzen}
\address{Kristina Frantzen\\Mathematisches Institut\\Universit\"at Bayreuth\\Germany\\
kristina.frantzen@uni-bayreuth.de}
\date {June 10, 2009}
\thanks{Research supported by Studienstiftung des
    deutschen Volkes and Deutsche Forschungsgemeinschaft}
\begin{document}
\begin{abstract}
K3-surfaces with antisymplectic involution and compatible symplectic actions of finite groups are considered. 
In this situation actions of large finite groups of symplectic transformations are shown to arise via double covers of Del Pezzo surfaces. A complete classification of K3-surfaces with maximal symplectic symmetry is obtained.
\end{abstract}
\maketitle
%
%
%
%%%%%%%%%%%%%%%%%%%%
%%% INTRODUCTION %%%
%%%%%%%%%%%%%%%%%%%%
%
\vspace*{-10mm}
\section{Introduction}
An
\emph{antisymplectic involution} $\sigma$ on a complex K3-surface $X$ is a holomorphic involution acting nontrivially on the space $H^0(X,\Omega^2) = \mathbb C \cdot \omega $ of holomorphic 2-forms on $X$, i.e., $\sigma^* \omega = -\omega$.
Here we study K3-surfaces with antisymplectic involution from the point of view of symmetry and
consider actions of finite groups $G$ of symplectic transformations which are compatible with $\sigma$ in the sense that every $g \in G$ is a holomorphic automorphism of $X$ with $g^* \omega = \omega$ and $g \sigma = \sigma g$.
\subsection*{Main results}
Given a finite group $G$ we investigate if it can act in a compatible fashion on a K3-surface $X$ with antisymplectic involution
$\sigma$. If this is the case, then already the order of $G$ yields strong constraints on the geometry of $X$ and we obtain the following description.
\begin{theorem1}
Let $X$ be a K3-surface with a symplectic action of a finite group $G$ centralized by an
antisymplectic involution $\sigma$ and assume that
$\mathrm{Fix}_X(\sigma)\neq \emptyset$. If $|G|>96$, then $X/\sigma$ is a ($G$-minimal)
Del Pezzo surface and $\mathrm{Fix}_X(\sigma)$ is a smooth connected curve $B$ with $ g(B)\geq
3$.
\end{theorem1}
By a theorem due to Mukai \cite{mukai} finite groups of symplectic
transformations on K3-surfaces are characterized by the existence of a
certain embedding into a particular Mathieu group and are 
subgroups of the following eleven finite groups of maximal symplectic symmetry (cf.\ also \cite{kondo}).
\begin{table}[h]
\renewcommand{\arraystretch}{1.3}
\centering
\begin{tabular}{c|c|c|c|c|c|c|c|c|c|c|c} 
   & 1 & 2 & 3 & 4 & 5 & 6 & 7 & 8 & 9 & 10 & 11  \\ \hline 
 group & $L_2(7)$& $A_6$& $S_5$& $M_{20}$& $F_{384}$& $A_{4,4}$& $T_{192}$& $H_{192}$ &$N_{72}$& $M_9$& $ T_{48}$ \\ \hline 
order & 168 & 360 & 120 & 960 & 384 & 288 & 192 & 192 & 72 &72& 48
\end{tabular}
\vspace{3mm}
\caption{Maximal finite groups of symplectic automorphisms on K3-surfaces}\label{TableMukai}
\end{table}
\vspace{-5mm}\\
\renewcommand{\arraystretch}{1}
This result naturally limits 
the possible choices of $G$ above
and has
led us to consider the classification problem for groups $G$ from this list of eleven groups:
as a refinement of Theorem \ref{roughclassi} we obtain the following complete classification.
\begin{theorem2}
Let $G$ be a group from Table \ref{TableMukai} acting on a K3-surface $X$ by symplectic transformations and let $\sigma $ be an antisymplectic involution on $X$ centralizing $G$ with $\mathrm{Fix}_X(\sigma) \neq \emptyset$. Then the pair $(X,G)$ is equivariantly isomorphic to a surface in Table \ref{Mukai times invol} below. In particular, for the groups $G$ numbered 4-8 on Mukai's list, there does not exist a K3-surface with an action of $G \times C_2$ satisfying the properties above.
\begin{table}[h]
\renewcommand{\arraystretch}{1.3}
\centering
\begin{tabular}{l|l|l}
  & $G$  & \textbf{K3-surface} $X$ \\ \hline 
1a & $L_2(7)$  & $\{x_1^3x_2+x_2^3x_3+x_3^3x_1+x_4^4 =0\} \subset \mathbb P_3$\\ \hline
1b & $L_2(7)$  & Double cover of $\mathbb P_2$ branched along 
		$\{x_1^5x_2+x_3^5x_1+x_2^5x_3-5x_1^2 x_2^2 x_3^2 =0\}$\\ \hline
2 & $A_6$  & Double cover of $\mathbb P_2$ branched along \\
& &  		$\{10 x_1^3x_2^3+ 9 x_1^5x_3 + 9 x_2^3x_3^3-45 x_1^2 x_2^2 x_3^2-135 x_1 x_2 x_3^4 + 27 			x_3^6  =0\}$\\ \hline
3a & $S_5$  & $\{\sum_{i=1}^5 x_i = \sum_{i=1}^6 x_1^2 = \sum_{i=1}^5 x_i^3=0\} \subset \mathbb P_5$\\ \hline
3b & $S_5$  & Minimal desingularization of the double cover of $\mathbb P_2$ branched along \\
 & &  $\{2x_1^4x_2x_3+2x_1x_2^4x_3+2x_1x_2x_3^4-2x_1^4x_2^2-2x_1^4x_3^2-2x_1^2x_2^4-2x_1^2x_3^4$\\
 & & $-2x_2^4x_3^2-2x_2^2x_3^4+2x_1^3x_2^3+2x_1^3x_3^3+2x_2^3x_3^3+x_1^3x_2^2x_3+x_1^3x_2x_3^2$\\
 & &  $+x_1^2x_2^3x_3+x_1^2x_2x_3^3+x_1x_2^3x_3^2 + x_1x_2^2x_3^3-6x_1^2x_2^2x_3^2=0\}$ \\ \hline
9 & $N_{72}$ & $\{ x_1^3+ x_2 ^3 + x_3^3 +x_4^3= x_1x_2 + x_3x_4+ x_5^2 = 0 \} \subset \mathbb P_4$ \\ \hline
10 & $M_9$ & Double cover of $\mathbb P_2$ branched along $\{x_1^6+x_2^6 +x_3^6 -10(x_1^3x_2^3 + x_2^3x_3^3 +x_3^3x_1^3) =0\}$\\ \hline
11a & $ T_{48}$  & Double cover of $\mathbb P_2$ branched along $\{x_1x_2(x_1^4-x_2^4)+ x_3^6 =0\}$\\ \hline
11b & $ T_{48}$  & Double cover of $\{  x_1x_2(x_1^4-x_2^4)+ x_3^3+x_4^2=0 \} \subset \mathbb P(1,1,2,3)$\\
&  		& branched along $\{x_3=0\} $
\end{tabular}
\vspace{2mm}
\caption{K3-surfaces with $G \times C_2$-symmetry}
\label{Mukai times invol}
\end{table}
\renewcommand{\arraystretch}{1}
\end{theorem2}
In addition to Examples 1a, 3a, 9, 10, and 11a, which have already been described by Mukai (Example 0.4 in \cite{mukai}) the table contains additional examples (1b, 2, 3b, and 11b) of K3-surfaces with maximal
symplectic symmetry. These arise as equivariant double covers of Del Pezzo surfaces.
\begin{remark}
Besides the above mentioned classification of finite groups of symplectic transformations on K3-surfaces by Mukai (cf.\ also Nikulin's classification in the Abelian case \cite{NikulinFinite} and Kond\=o's alternative approach using lattice theory \cite{kondo}) one of our starting points has been the study of K3-surfaces with $L_2(7)$-symmetry in \cite{OZ168}. Although our approach here is independent of the results by Keum, Oguiso, and Zhang (cf.\ e.g.\ \cite{OZ168}, \cite{KOZ1}, \cite{KOZ2}, and the summary \cite{ZhangSummary}), our understanding of finite group actions on K3-surfaces strongly gained from the numerous contributions to the subject by the authors above.
\end{remark}
We conclude this introduction with an outline of our general classification strategy.
\subsection*{Equivariant minimal model program}
The quotient of a K3-surface by an antisymplectic involution $\sigma$ with fixed points centralized by a finite group $G$ is a rational $G$-surface $Y$ to which we apply an equivariant version of the minimal model program respecting finite symmetry groups (cf.\ Example 2.18 in \cite{kollarmori} and Section 2.3  in \cite{Mori}). A detailed exposition using the language of Mori theory and including in particular equivariant analogues of the cone and contraction
theorems is given e.g.\ in \cite{kf}, Chapter 2 and \cite{FKH2}, Section 4. Here we briefly summarize the program in the following classification
result. 
\begin{proposition}\label{Gmmp}
Let $Y$ be a smooth projective surface with an action of a finite
group $G$.
Then there exists a sequence of $G$-equivariant
extremal contractions $Y \to Y_{(1)} \to \dots \to Y_{\mathrm{min}}$ such that
$Y_{\mathrm{min}}$ satisfies one of the following conditions:
%\begin{samepage}
\begin{enumerate}
\item
The canonical line bundle $\mathcal{K}_{Y_\mathrm{min}}$ is nef.
\item
$Y_\mathrm{min}$ is an $G$-equivariant conic bundle over a smooth curve,
i.e., there exists an $G$-equivariant morphism
$f: Y_{\mathrm{min}} \to C$ onto a smooth curve $C$ such that the general
fiber is a rational curve. If $f$ has singular fibers, these
consist of two (-1)-curves intersecting transversally.
\item
$\mathcal{K}^{-1}_{Y_{\mathrm{min}}}$ is ample, i.e., $Y_\mathrm{min}$ is a Del Pezzo surface.
\end{enumerate}
%\end{samepage}
Each extremal contraction is the contraction of finitely many disjoint (-1)-curves forming a $G$-orbit.
\end{proposition}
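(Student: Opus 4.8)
The plan is to run the minimal model program one contraction at a time, using as basic tools the equivariant versions of the cone and contraction theorems developed in \cite{kf}, Chapter~2, and \cite{FKH2}, Section~4, applied to the cone $\overline{\mathrm{NE}}(Y)^G$ of classes of $G$-invariant effective $1$-cycles inside $N_1(Y)^G$. If $\mathcal{K}_Y$ is nef we are already in case (1) with $Y_{\mathrm{min}}=Y$. Otherwise the cone theorem produces a $\mathcal{K}_Y$-negative extremal ray $R\subset\overline{\mathrm{NE}}(Y)^G$, and the contraction theorem gives a $G$-equivariant morphism $\varphi_R\colon Y\to Y'$ onto a normal projective variety of dimension $0$, $1$ or $2$, contracting precisely the curves with class in $R$, such that $-\mathcal{K}_Y$ is $\varphi_R$-ample and $\rho(Y')^G=\rho(Y)^G-1$. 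I would then analyse the three cases and, in the divisorial one, replace $Y$ by $Y'$ and iterate.

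If $\dim Y'=0$, then $R$ is the whole cone, so $\rho(Y)^G=1$; averaging an ample class over $G$ produces a $G$-invariant ample class, which because $\mathcal{K}_Y\cdot R<0$ is a positive multiple of $-\mathcal{K}_Y$, so $-\mathcal{K}_Y$ is ample and $Y$ is a Del Pezzo surface --- case (3). If $\dim Y'=1$, I would write $C=Y'$, $f=\varphi_R$ and read the conic bundle structure off adjunction: a general fibre $F$ has $F^2=0$ and $\mathcal{K}_Y\cdot F<0$, forcing $g(F)=0$ and $\mathcal{K}_Y\cdot F=-2$, so $f$ has rational general fibre; and a short adjunction argument using $f$-ampleness of $-\mathcal{K}_Y$ together with the constancy of the arithmetic genus in the family shows that every singular fibre consists of exactly two $(-1)$-curves meeting transversally in one point --- case (2).

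The divisorial case $\dim Y'=2$ carries the one substantive point. Any irreducible curve $E$ contracted by $\varphi_R$ has class on the one-dimensional ray $R$, hence $E^2<0$, and $\mathcal{K}_Y\cdot E<0$; adjunction then forces $\mathcal{K}_Y\cdot E=E^2=-1$, so $E$ is a $(-1)$-curve. Next I would observe that two distinct contracted curves $E_1\neq E_2$ have proportional classes on $R$ and both self-intersection $-1$, hence are numerically equivalent, so $E_1\cdot E_2=E_1^2=-1<0$, which is impossible for distinct irreducible curves; therefore the contracted locus is a disjoint union of $(-1)$-curves permuted transitively by $G$. Castelnuovo's criterion then gives that $Y'$ is again a smooth projective surface with induced $G$-action and $\rho(Y')^G=\rho(Y)^G-1$, so the iteration is legitimate and, since the Picard number strictly decreases and is bounded below by $1$, terminates at some $Y_{\mathrm{min}}$ satisfying (1), (2) or (3).

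The main obstacle is not in this case analysis, which is elementary surface geometry, but in the two inputs it rests on: one must know that a $\mathcal{K}_Y$-negative extremal ray of $\overline{\mathrm{NE}}(Y)^G$ actually exists and that its contraction is realised by a $G$-equivariant morphism of projective varieties with the stated effect on $G$-invariant Picard numbers. I would handle this by citing \cite{kf} and \cite{FKH2} and presenting only the case analysis; alternatively, for a self-contained treatment, one replaces the appeal to the contraction theorem by the classical constructions of contractions of $(-1)$-curves and of $\mathbb{P}^1$-fibrations on smooth surfaces, carried out $G$-equivariantly by working throughout with $G$-orbits of extremal curves.
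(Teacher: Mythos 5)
Your overall strategy --- running the minimal model program on the invariant cone $\overline{\mathrm{NE}}(Y)^G$ via equivariant cone and contraction theorems and sorting the resulting contractions by the dimension of the target --- is exactly the route taken in the sources the paper itself defers to for this proposition (the paper offers no proof, only the references \cite{kollarmori}, \cite{Mori}, \cite{kf}, \cite{FKH2}), and your treatment of the cases $\dim Y'=0$ and $\dim Y'=1$ is sound.

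The divisorial case, however, which carries the very assertion to be proved (``finitely many disjoint $(-1)$-curves forming a $G$-orbit''), is argued incorrectly. A curve contracted by $\varphi_R$ does not in general have its class on the ray $R$: the ray lies in $N_1(Y)^G$ and is spanned by the class of the whole orbit $\sum_{g\in G}gE$, whereas the individual curves $gE$ are typically pairwise numerically inequivalent --- necessarily so, since disjoint $(-1)$-curves satisfy $E_1\cdot E_2=0\neq E_1^2$. So the deduction ``distinct contracted curves are proportional on $R$, hence numerically equivalent, contradiction'' rests on a false premise; and if it were valid it would show that each equivariant extremal contraction collapses a single irreducible curve, contradicting the statement you are proving (take $G=C_2$ acting on the blow-up of $\mathbb P_2$ at two points interchanged by the involution: the equivariant contraction collapses both exceptional curves, whose classes span different rays of $\overline{\mathrm{NE}}(Y)$). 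Relatedly, the contraction theorem cannot be quoted equivariantly in the form ``contracts precisely the curves with class in $R$''; what one contracts is the minimal $G$-invariant $\mathcal K_Y$-negative extremal face of $\overline{\mathrm{NE}}(Y)$ containing $R$, which contains the classes of all curves in the orbit. The missing argument --- which is precisely the content of \cite{kf}, Chapter 2 and \cite{FKH2}, Section 4 --- is to show, by averaging the cone-theorem decomposition of a generator of $R$ and using extremality, that $R$ is spanned by the orbit class $D=\sum_g gE$ of a single irreducible curve $E$ with $\mathcal K_Y\cdot E<0$; when $E^2<0$ adjunction makes every $gE$ a $(-1)$-curve, and if two curves of the orbit met, transitivity of the $G$-action would force every member to meet another one, giving $D^2\ge 0$ and placing $D$ either in the relative interior of the cone or in the fibration situation, which is incompatible with $\mathbb R_{\ge 0}D$ being a $\mathcal K_Y$-negative extremal ray defining a birational contraction when $\rho(Y)^G\ge 2$. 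Either this argument or an explicit appeal to the cited references for the structure of the contracted locus must replace your numerical-equivalence step; your closing remark about working with $G$-orbits of extremal curves points in the right direction but is not carried out.
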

The surface $Y_\mathrm{min}$ is referred to as a \emph{$G$-minimal model of $Y$},
and the map $Y \to Y_\mathrm{min}$ is called a \emph{Mori reduction}. 
 If $Y$ is a rational surface, then $Y_\mathrm{min}$ is either a Del Pezzo surface or an equivariant conic bundle over $\mathbb P_1$, i.e., the reduction leads to the well-known classification of $G$-minimal rational surfaces (\cite{maninminimal}, \cite{isk}).
\begin{remark}
Equivariant Mori reduction and the theory of $G$-minimal models have applications in various different contexts and can also be generalized to higher dimensions.
Initiated by Bayle and Beauville in \cite{Bayle}, the methods have been employed in the classification of subgroups of the Cremona group $\mathrm{Bir}(\mathbb P_2)$ for example by Beauville and Blanc (\cite{Beauville}, \cite{BeauBlancPrime}, \cite{PhDBlanc} \cite{Blanc1}), de Fernex \cite{fernex}, Dolgachev and Iskovskikh \cite{DolgIsk}, \cite{DolgIsk2}, and Zhang \cite{ZhangRational}. These references also provide details regarding the equivariant minimal model program summarized above.
\end{remark}
\subsection*{Del Pezzo surfaces}
In all cases under consideration in this article the surface $Y_\mathrm{min}$ is a Del Pezzo surface. 
Del Pezzo surfaces are classified according to their degree $1 \leq d \leq 9$, which is by definition the self-intersection number of the canonical divisor class. 
A Del Pezzo surface is either obtained from $\mathbb P_2$ by blowing up $9-d$ points in general position or is isomorphic to $\mathbb P_1 \times \mathbb P_1$.
The anticanonical map realizes a Del Pezzo surface of degree  $d \geq 3$ as a degree $d$ subvariety in $\mathbb P_d$, defines a double cover for $d=2$, and an elliptic fibration for $d=1$.

Our understanding of Del Pezzo surfaces as surfaces obtained by
blowing up points in $\mathbb P_ 2$ or as degree
$d$ subvarieties of $\mathbb P_d$ enables us to decide whether
a given finite group $G$ can occur as a subgroup of the automorphisms
group $\mathrm{Aut}(Z)$ of a certain Del Pezzo surface $Z$. Furthermore, we repeatedly consider the configuration of (-1)-curves on a Del Pezzo surfaces $Z$ and the induced action of $\mathrm{Aut}(Z)$ on it. 
In certain cases our analysis relies on Dolgachev's discussion of automorphism groups of Del Pezzo surfaces in \cite{dolgachev}, Chapter 10.

Using detailed knowledge of the equivariant reduction map $Y \to Y_\mathrm{min}$, the structure of the invariant set $\mathrm{Fix}_X(\sigma)$, 
and the equivariant geometry of Del Pezzo surfaces, we classify $Y_\mathrm{min}$, $Y$, and $\mathrm{Fix}_X(\sigma)$ and can describe $X$ as an equivariant double cover of a possibly blown-up Del Pezzo surface. 
\subsection*{Acknowledgements} 
The results presented in this note originate from the author's Ph.D.\ thesis at Ruhr-Universit\"at Bochum. The author would like to thank Alan Huckleberry for his advice and support and for numerous intensive, helpful discussions.

%
%
%%%%%%%%%%%%%%%%%%%%
%%% CENTRALIZERS %%%
%%%%%%%%%%%%%%%%%%%%
%
\section{Centralizers of antisymplectic involutions}\label{centralizer}
This section is dedicated to a rough classification (Theorem \ref{roughclassi}) of K3-surfaces with anti\-sym\-plec\-tic involutions centralized by large groups of symplectic transformations. For a group $H < \mathrm{Aut}(X)$ we denote by $H_\mathrm{symp} \lhd H$ the normal subgroup of symplectic automorphisms. The group $H / H_\mathrm{symp}$ acts effectively on $\Omega^2(X) = \mathbb C \cdot \omega_X$. In particular, if $H$ is finite, then $ H / H_\mathrm{symp}$ is cyclic.

We consider a K3-surface $X$ with an action of a finite group $G
\times C_2 < \mathrm{Aut}(X)$ and assume that $G < \mathrm{Aut}_\mathrm{symp}(X)$ whereas $C_2$ is generated by an
antisymplectic involution $\sigma$ centralizing $G$. Furthermore, we assume that $\mathrm{Fix}_X(\sigma) \neq \emptyset$. Let $\pi: X \to X/ \sigma = Y$ denote the quotient map, $R = \mathrm{Fix}_X(\sigma)$ and $B = \pi(R)$ the ramification and branch set of $\pi$. The quotient surface $Y$ is a smooth rational $G$-surface to which we apply an equivariant minimal model program. By Proposition \ref{Gmmp} a $G$-minimal model $Y_\mathrm{min}$ of $Y$ is either a Del Pezzo surface or an equivariant conic bundle over $\mathbb P_1$. In the later case, the following lemma shows that the possibilities for $G$ are limited by the classification of finite groups with an effective action on $\mathbb P_1$, i.e., cyclic and dihedral groups, $C_n$ and $D_{2n}$, and the exceptional tetrahedral, octahedral, and icosahedral group, $T_{12}$, $O_{24}$, $I_{60}$.
\begin{remark}\label{order of symp aut}
Several times we will use the fact that the order of a symplectic transformation on a K3-surfaces is at most eight. This follows implicitly from Table \ref{TableMukai}, but is originally due to an argument of Nikulin classifying finite
Abelian groups of symplectic transformation on K3-surfaces in \cite[\S5]{NikulinFinite}. Note that Nikulin' approach also shows that a symplectic automorphism of order 2,3,4,5,6,7,8 has 8,6,4,4,2,3,2 fixed points, respectively.
\end{remark}
\begin{lemma}\label{conicbundle}
If a $G$-minimal model $Y_\mathrm{min}$ of $Y$ is an equivariant conic bundle, then $|G| \leq 96$.
\end{lemma}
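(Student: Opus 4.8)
The plan is to bound $|G|$ by analyzing the conic bundle structure $f: Y_{\mathrm{min}} \to \mathbb P_1$ together with the extra geometric data coming from the fact that $Y_{\mathrm{min}}$ is the (reduced) quotient of a K3-surface by an antisymplectic involution with nonempty fixed locus. First I would record the two sources of constraints on $G$. On one hand, $G$ acts on the base $\mathbb P_1$ through a finite quotient $\bar G = G/G_0$, where $G_0$ is the kernel; by the classification of finite subgroups of $\mathrm{PGL}_2(\mathbb C)$, $\bar G$ is cyclic, dihedral, or one of $T_{12}, O_{24}, I_{60}$. On the other hand, $G_0$ acts fiberwise, i.e., on a generic fiber $F \cong \mathbb P_1$, preserving the (at most two) points of $F$ lying over the branch locus $B$ and the finitely many nodes of singular fibers; so $G_0$ is again a finite subgroup of $\mathrm{PGL}_2(\mathbb C)$, and moreover it fixes or swaps a bounded number of marked points on $F$. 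The goal is to show $|G_0|$ and $|\bar G|$ are each bounded by an explicit constant whose product is at most $96$.

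The key step is to control $G_0$ using the branch curve. Pull back to $X$: the preimage $Y' \to Y \to Y_{\mathrm{min}}$ and the double cover $X \to Y$ fit together, and the crucial point is that $R = \mathrm{Fix}_X(\sigma)$ is a nonempty $\sigma$-fixed curve whose image $B \subset Y$ is a smooth curve (this is standard for antisymplectic involutions with fixed points on K3-surfaces — the quotient is smooth and the branch curve is reduced and smooth). Push $B$ forward to $Y_{\mathrm{min}}$: its image meets the generic fiber $F$ in a fixed number of points, and I claim this number is small — in fact the restriction $B|_F$ consists of at most $2$ points, because $X \to Y$ is a double cover and the preimage of a generic fiber is a curve of bounded genus (it is a double cover of $\mathbb P_1$ contained in the K3-surface, hence rational or elliptic, forcing the branching over $F$ to have degree $\le 4$, and combined with $G_0$-invariance and the node-avoidance one pins the fiber-degree of $B$). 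A finite subgroup of $\mathrm{PGL}_2(\mathbb C)$ preserving a set of $k \le 4$ points (with multiplicity/orbit structure) has order bounded by a small constant — at most $24$ in the relevant configurations, and one checks the sporadic groups $I_{60}, O_{24}$ are excluded when $k$ is too small, leaving $|G_0| \le 24$ or so. Simultaneously, the singular fibers of $f$ form a $\bar G$-invariant finite subset of $\mathbb P_1$ of cardinality determined by the degree of $Y_{\mathrm{min}}$ (at most $8$ for a conic bundle with $K^2 \ge 1$), and together with the image of $B$ on the base this bounds $|\bar G|$ as well. Multiplying the two bounds and checking the extremal cases case-by-case against the list of finite subgroups of $\mathrm{PGL}_2(\mathbb C)$ gives $|G| = |G_0|\cdot|\bar G| \le 96$.

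The main obstacle I anticipate is getting the fiberwise bound on $|G_0|$ sharp enough: a naive count only says $G_0$ preserves some marked points on $F$ and could a priori be as large as $O_{24}$ or even $I_{60}$, and $24 \times 4 = 96$ only just works while $60$ would not. So the delicate part is ruling out, or tightly constraining, the octahedral and icosahedral possibilities for $G_0$ — this requires using more than the marked points: one must track how $G_0$ acts on the singular fibers and on the two sheets of the double cover $X \to Y$ over a $G_0$-invariant fiber, and exploit Nikulin's bound (Remark \ref{order of symp aut}) that symplectic automorphisms of $X$ have order at most $8$, which forbids elements of order $5$ or large order in $G$ from acting too freely. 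A secondary technical point is ensuring the product bound is assembled correctly when $G$ is a nonsplit extension of $\bar G$ by $G_0$; here one argues with the exact sequence $1 \to G_0 \to G \to \bar G \to 1$ directly rather than assuming a splitting. Once the two factor bounds are in hand, the conclusion $|G|\le 96$ follows by multiplication and a short finite check.
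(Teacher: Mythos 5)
Your skeleton (the exact sequence $1 \to G_0 \to G \to \bar G \to 1$, the classification of finite subgroups of $\mathrm{PGL}_2(\mathbb C)$, Nikulin's order-$8$ bound) matches the paper's, but the step you yourself flag as delicate — pinning down the ineffectivity $G_0$ — is exactly where the argument has a genuine gap, and your proposed fix does not close it. The idea the paper uses, and which is missing from your proposal, is this: any $n \in G_0$ acts trivially on the base and hence fixes at least two points in every general fiber, so $\mathrm{Fix}_{Y_\mathrm{min}}(n)$ is positive-dimensional; but $n$ lifts to a \emph{symplectic} automorphism of $X$, which has only isolated fixed points, so over that fixed curve the lift of $n$ must coincide with $\sigma$, forcing $n^2 = \mathrm{id}$. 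Since $G_0$ acts effectively on the general fiber, this gives $G_0 \in \{1, C_2, C_2 \times C_2\}$, i.e.\ $|G_0| \le 4$. Your substitute — bounding $G_0$ by the marked points it must preserve on a general fiber $F$ — cannot deliver anything comparable: first, the count is wrong, since $B$ is linearly equivalent to $-2K_Y$ and $(-2K_Y)\cdot F = 4$, so $B$ meets a general fiber in four points, not at most two (equivalently, the preimage of $F$ in $X$ is generically an elliptic curve, a double cover of $\mathbb P_1$ branched at four points); second, even with four marked points a subgroup of $\mathrm{PGL}_2(\mathbb C)$ preserving them can have order $12$ (e.g.\ $T_{12}$ has orbits of length four), and your own tentative bound of $24$ multiplied against any realistic bound on $\bar G$ overshoots $96$ by a wide margin.

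The bound on $\bar G$ is also not assembled correctly in your sketch. Using the singular fibers fails outright when $Y_\mathrm{min}$ is a Hirzebruch surface (no singular fibers), and there is no reason for $K^2 \ge 1$ on a $G$-minimal conic bundle; moreover a subset of up to eight points on $\mathbb P_1$ can be preserved by $O_{24}$, so this route does not rule out large quotients. The paper instead bounds $\bar G = G/G_0$ directly from the classification together with the order-$8$ bound on elements of $G$: cyclic quotients have order at most $8$, dihedral at most $16$, and $T_{12}$, $O_{24}$ remain, so $|\bar G| \le 24$; the case $\bar G \cong I_{60}$ with $G_0 \ne 1$ is excluded because an order-five element of $G$ must centralize $G_0$ (neither $C_2$ nor $C_2 \times C_2$ has an automorphism of order five), producing a $C_{10} \cong C_2 \times C_5$ inside the symplectic group $G$, contradicting the order bound; and if $G_0 = 1$ then $G$ itself acts effectively on $\mathbb P_1$ and $|G| \le 60$. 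Only with $|G_0| \le 4$ and $|\bar G| \le 24$ does the product give $96$. Without the order-two argument for the ineffectivity, your proof cannot reach the stated bound.
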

\begin{proof}
Let $\varphi: Y_\mathrm{min} \to \mathbb{P}_1$ be an equivariant conic bundle structure on $Y_\mathrm{min}$. We consider the induced action of $G$ on the base $\mathbb P_1$. If this action is effective, then $G$ is among the groups specified above. Since the maximal order of an element in $G$ is eight, 
it follows that the order of $G$ is bounded by 60.

If the action of $G$ on the base $\mathbb P_1$ is not effective, every element $n$ of the ineffectivity $N < G$ has two fixed points in the general fiber, which is isomorphic to $ \mathbb P_1$. This gives rise to a positive-dimensional $n$-fixed point set in $Y_\mathrm{min}$ and $Y$. A symplectic automorphism however has only isolated fixed points. It follows that the action of $n$ coincides with the action of $\sigma $ on $\pi^{-1}(\mathrm{Fix}_Y(N)) \subset X$ and the order of $n$ is two. Since $N$ acts effectively on the general fiber, it follows that $N$ is isomorphic to either $C_2$ or $C_2 \times C_2$. 

If $G/N$ is isomorphic to the icosahedral group $I_{60}= A_5$, then $G$ fits into the exact sequence $ 1 \to N \to G \to A_5 \to 1$ for $N=C_2$ or $C_2 \times C_2$. Let $\xi$ be an element of order five in $G$. Since neither $C_2$ nor $C_2 \times C_2$ has automorphisms of order five it follows that $\xi$ centralizes the normal subgroup $N$. In particular, there is a subgroup $C_2 \times C_5 \cong C_{10}$ in $G$ which contradicts the assumption that $G$ is a group of symplectic transformations and therefore its elements have order at most eight.

If $G/N$ is cyclic or dihedral, we again use the fact that the order of elements in $G$ is bounded by $8$ to conclude $|G/N| \leq 16$. It follows that the maximal possible order of $G/N$ is $|O_{24}|= 24$. Using $|N| \leq 4$ we obtain $|G| \leq 96$.
\end{proof}
For $| G | >96$ the lemma above allows us to restrict our classification to the case where any $G$-minimal model $Y_\mathrm{min}$ of $Y$ is a Del Pezzo surface.

\subsection{Branch curves and Mori fibers} \label{branch curves mori fibers}
We use the term \emph{curve} for irreducible 1-dimensional analytic subsets of surfaces.
For the remainder of this section we
fix an equivariant Mori reduction $M: Y \to Y_\mathrm{min}$.
A rational curve $E \subset Y$ is called a \emph{Mori fiber} if it is contracted in some step of the equivariant Mori reduction $Y \to Y_\mathrm{min}$. The set of all Mori fibers is denoted by $\mathcal E$. Its cardinality $|\mathcal E|$ is denoted by $m$.

A linearization argument shows that $\mathrm{Fix}_X(\sigma)$ is a disjoint union of smooth curves. More precisely, by a basic result originally due to Nikulin (\cite{NikulinFix}), the set $\mathrm{Fix}_X(\sigma)$ is either a union of two linearly equivalent elliptic curves or the union of one single curve of arbitrary genus with a (possibly empty) union of rational curves. In the following we let $n$ denote the total number of rational curves in $\mathrm{Fix}_X(\sigma)$. 
We choose a triangulation of $\mathrm{Fix}_X(\sigma)$ and extend it to a triangulation of the surface $X$. The topological Euler characteristic of the double cover is given by
\[
 e(X) = 24 = 2e(Y) - \sum_{C \subset \mathrm{Fix}_X(\sigma)} e(C)\geq 2e(Y) -2n  = 2(e(Y_\mathrm{min}) +m) -2n.
\]
This yields $m \leq n+12 - e(Y_\mathrm{min})$. Combining this with $e(Y_{min}) \geq 3$ shows that the total number $m$ of Mori fibers in $Y$ is bounded by 
\begin{equation}\label{moribound}
 m \leq n+ 12 - e(Y_\mathrm{min}) \leq n+9. 
\end{equation}
\begin{remark}\label{selfintcovering}
In the following, we repeatedly use the fact that for a finite proper surjective holomorphic map of complex manifolds (spaces) $\pi: X \to Y$ of degree $d$, the intersection number of pullback divisors fulfills $(\pi^*D_1 \cdot \pi^*D_2) =  d(D_1 \cdot D_2)$. In particular, if $\pi: X \to Y$ is a degree two map of surfaces, the image $\pi (C) \subset Y$ of a curve $C \subset X$ contained in the ramification locus $R$ of $\pi$ has self-intersection $(\pi(C))^2= 2 C^2$.
\end{remark}
\begin {lemma} \label {at most two}
Every Mori fiber $E \in \mathcal{E}$, $ E \not\subset B$ meets the branch locus $B$ in at
most two points. If $E$ and $B$ are tangent at $p$, then 
$E\cap B = \{p\}$ and $(E \cdot B)_p =2$.
\end {lemma}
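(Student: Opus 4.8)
The plan is to transfer the question to $X$ via $\pi$, using two facts: that a double cover of $Y$ whose total space is a K3-surface is branched along a member of $|-2K_Y|$, and that a Mori fiber is, at the stage of the reduction where it is contracted, a smooth $(-1)$-curve.

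First I would record the two numerical inputs. On the one hand $B\sim -2K_Y$: write the double cover data as $\pi_*\mathcal O_X=\mathcal O_Y\oplus\mathcal O_Y(-L)$ with $\mathcal O_Y(B)=\mathcal O_Y(2L)$ and ramification divisor $R\sim\pi^*L$; then the ramification formula $K_X=\pi^*K_Y+R=\pi^*(K_Y+L)$ together with $K_X=0$ gives $K_Y+L\sim 0$ (here one uses that $\pi^*$ is injective on $\mathrm{Pic}(Y)$, which is torsion-free since $Y$ is a rational surface), whence $B\sim 2L\sim -2K_Y$. On the other hand $E^2\le -1$: if $Y=Y_{(0)}\to Y_{(1)}\to\cdots\to Y_{\mathrm{min}}$ is the fixed Mori reduction and $E$ is contracted by the $k$-th step, then the image $E_{(k-1)}$ of $E$ in $Y_{(k-1)}$ is one of the disjoint smooth $(-1)$-curves contracted there, and $E$ is the successive strict transform of $E_{(k-1)}$ under the blow-ups $Y_{(j)}\to Y_{(j+1)}$ with $j<k-1$; since the strict transform of a smooth curve under a blow-up stays smooth with non-increasing self-intersection, $E\cong\mathbb P_1$ is smooth and $E^2\le E_{(k-1)}^2=-1$.

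Now the computation is immediate. Adjunction on $Y$ applied to the smooth rational curve $E$ gives $-2=2g(E)-2=E^2+K_Y\cdot E$, hence $K_Y\cdot E=-2-E^2$ and therefore $(B\cdot E)=-2\,(K_Y\cdot E)=2E^2+4$. Since $E\not\subset B$ we have $(B\cdot E)\ge 0$, so $E^2\ge -2$; combined with $E^2\le -1$ this leaves $E^2\in\{-2,-1\}$ and hence $(B\cdot E)\in\{0,2\}$. As every point of $E\cap B$ contributes at least $1$ to $(B\cdot E)$, the curve $E$ meets $B$ in at most two points; and if $E$ is tangent to $B$ at a point $p$, so that $(E\cdot B)_p\ge 2$, then $(B\cdot E)\le 2$ forces $(E\cdot B)_p=(B\cdot E)=2$ and $E\cap B=\{p\}$.

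The proof is short, so there is no single hard step; the points that need a word of care are the identification $B\sim -2K_Y$ (standard for double covers with K3 total space, where torsion-freeness of $\mathrm{Pic}(Y)$ is what lets one pass from $2L\sim B$ to an equality involving $K_Y$) and the monotonicity of self-intersection along the Mori reduction, used to pin down $E^2\le -1$. If one prefers to avoid the injectivity of $\pi^*$ on $\mathrm{Pic}$, the same identity $(B\cdot E)=2E^2+4$ can be obtained by computing $\chi(\mathcal O_{\pi^*E})$ in two ways: from the sequence $0\to\mathcal O_X(-\pi^*E)\to\mathcal O_X\to\mathcal O_{\pi^*E}\to 0$ on the K3-surface $X$ one gets $\chi(\mathcal O_{\pi^*E})=-E^2$, while $\pi_*\mathcal O_{\pi^*E}=\mathcal O_E\oplus\mathcal O_E(-L|_E)$ (projection formula) gives $\chi(\mathcal O_{\pi^*E})=2-\tfrac12(B\cdot E)$.
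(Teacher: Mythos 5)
Your proof is correct, and it takes a genuinely different route from the paper's. The paper argues upstairs on $X$: assuming $|E\cap B|\geq 2$ or $E\cdot B\geq 3$, it studies $\pi^{-1}(E)=\pi^*E$, uses adjunction on the K3-surface (rational components are $(-2)$-curves) together with $(\pi^*E)^2=2E^2<0$ to rule out reducibility, deduces $E^2=-1$ and $|E\cap B|=2$, and then establishes transversality by a separate linearization argument on the normal bundle of $\pi^{-1}(E)$, the tangency statement following by contraposition. You instead work entirely on $Y$: from the double-cover structure and $K_X\sim 0$ you get $B\sim -2K_Y$, and combining adjunction on $Y$ with the fact that a Mori fiber is the strict transform of a contracted $(-1)$-curve (so $E$ is smooth rational with $E^2\leq -1$ -- the same negativity the paper uses implicitly when writing $0>2k$) you obtain $B\cdot E=2E^2+4\in\{0,2\}$, which yields both assertions simultaneously. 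Your argument is shorter and gives the stronger bound $E\cdot B\leq 2$, so no separate transversality discussion is needed; what it does not reproduce is the structural by-product of the paper's proof -- the irreducibility or reducibility of $\pi^{-1}(E)$ according to $|E\cap B|$ and the precise self-intersections recorded in Remark \ref{self-int of Mori-fibers} -- which the paper exploits later, for instance in the proof of Theorem \ref{roughclassi}. Both of your justifications of the key input $B\sim -2K_Y$ (the torsion-freeness of $\mathrm{Pic}(Y)$, or alternatively the two computations of $\chi(\mathcal O_{\pi^*E})$) are sound.
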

\begin{proof} 
Let $E \in \mathcal{E}$, $E^2=k$ be a Mori fiber such that $E \not\subset B$ and $|E\cap B| \geq 2$ or $E \cdot B \geq 3$.
By the remark above, the divisor
$\pi^{-1}(E) = \pi^* E$ has self-inter\-section $2k$. Assume that $\pi^{-1}(E)$ is
reducible and let $\tilde E_1, \tilde E_2$ denote its irreducible components. These are rational
and therefore, by adjunction on the K3-surface $X$, have self-intersection number $-2$.  We write
$$
0 > 2k = (\pi^{-1}(E))^2 = \tilde E_1^2 + \tilde E_2^2 + 2 (\tilde E_1 \cdot \tilde E_2) = -4 + 2 (\tilde E_1 \cdot \tilde E_2).
$$
Since $\tilde E_1$ and $\tilde E_2$ intersect at points in the preimage of $E \cap B$, we obtain $\tilde E_1 \cdot \tilde E_2 \geq 2$, a contradiction. It follows that $\pi^{-1}(E)$ is irreducible. Consequently, $k=-1$ and $\pi^{-1}(E)$ is a smooth rational curve with precisely two $\sigma$-fixed points showing $|E\cap B| = 2$. It remains to show that the intersection is transversal.

To see this, let $N_{\tilde{E}}$ denote the normal bundle of $\tilde{E}$ in $X$. We
consider the induced action of $\sigma$ on $N_{\tilde{E}}$ by a bundle
automorphisms. We may equivariantly identify a tubular
neighbourhood of $\tilde E$ in $X$ with $N_{\tilde E}$ via a 
$C^{\infty}$-diffeomorphism. The $\sigma$-fixed point curves
intersecting $\tilde{E}$ map to curves of $\sigma$-fixed points in
$N_{\tilde{E}}$ intersecting the zero-section and vice versa. 
Let $D$ be a curve of $\sigma$-fixed points in $N_{\tilde{E}}$. If $D$ is
not a fiber of $N_{\tilde E}$, it follows that $\sigma$ stabilizes all 
fibers intersecting
$D$ and the induced action of $\sigma$ on the base must be trivial, a contradiction.
It follows that the $\sigma$-fixed point curves correspond to fibers of 
$N_{\tilde{E}}$, and $E$ and $B$ meet transversally.

By negation of the implication above, if $E$ and $B$ are tangent at $p$, then $|E \cap B |=1$ and $E \cdot B=2$.
\end{proof}
\begin{remark}\label{self-int of Mori-fibers}
Adjunction on $X$ and elementary considerations involving intersection numbers resembling those in the proof of the lemma above yield the following summary of the possible intersection geometry of a Mori fiber $E\in \mathcal E$ with 
the branch locus $B$ in relation to the self-intersection number of $E$.
\\
If $E \subset B $, then $E^2 = -4$. If $E \cap B = \emptyset$, then $E^2 = -2$ and $\pi^{-1}(E)$ is reducible.
If $E\cap B = \{p\}$, then $E^2 = -1$ and $\pi^{-1}(E)$ is reducible. If $E\cap B = \{p_1,p_2\}$, then $E^2 = -1$ and $\pi^{-1}(E)$ is irreducible.
More generally, the arguments involved apply to any (-1)-curve $E$ on $Y$ and therefore $E$ meets $B$ in either one or two points and $\pi^{-1}(E)$ is reducible or irreducible, respectively.
\end{remark}
\subsection{Rational branch curves}
In this subsection we find conditions on $G$, in particular conditions on the order of $G$, guaranteeing the absence of rational curves in $\mathrm{Fix}_X(\sigma)$. 

 It follows from adjunction that a curve with negative self-intersection on a Del Pezzo surface necessarily has self-intersection -1. So if $Y_\mathrm{min}$ is a Del Pezzo surface, all rational branch curves of $\pi$, which have self-intersection -4 by Remark \ref{selfintcovering}, need to be modified by the Mori reduction when passing to $Y_{\mathrm{min}}$ and therefore have nonempty intersection with the union of Mori fibers. We benefit from the following elementary observation regarding the behaviour of self-intersection numbers under monoidal transformations.
\begin{remark}\label{selfintblowdown} 
Let $\tilde X$ and $X$ be smooth projective surfaces and let $b: \tilde X \to X$ be the blow-down of a (-1)-curve $ E \subset \tilde X$. For a curve $B \subset \tilde X$, $B \neq E$, the self-intersection of its image in $X$ is given by $(b( B))^2 =  B^2 + ( E \cdot B)^2$.
\end{remark}
We denote by $\mathcal{C}$ the set of rational branch curves of
$\pi$. The total number $|\mathcal{C}|$ of these curves is denoted
by $n$. The union of all Mori fibers not contained in the branch locus $B$ is denoted by $\bigcup E_i$.
Let $\mathcal{C}_{\geq k}= \{ C \in \mathcal C \, | \, |C \cap  \bigcup E_i | \geq k\} $ be the set of those rational branch curves $C$ which meet $\bigcup E_i$ in at least $k$ distinct points and set $r_k = |\mathcal{C}_{\geq k}|$. 
We denote by
$\mathcal{E}_{\geq k}$ the set of Mori fibers $E \not\subset B$ which
intersect some $C \in \mathcal{C}_{\geq k}$ and define 
\[
P_k = \{ (p,E) \, |\, p \in C \cap E,\, E \in \mathcal{E}_{\geq k},\, C
\in \mathcal{C}_{\geq k}\} \subseteq Y \times \mathcal{E}_{\geq k}
\]
and the projection map $\mathrm{pr}_k: P_k \to \mathcal{E}_{\geq k}$ mapping
$(p,E)$ to $E$. This map is surjective by definition of
$\mathcal{E}_{\geq k}$ and its fibers consist of $\leq 2$ points by
Lemma \ref{at most two}. Using $|P_k| \geq k r_k$ we see
\begin{equation}\label{boundforE_k}
|\mathcal{E}_{\geq k}| \geq \frac{k}{2}r_k.
\end{equation} 
Let $N$ be the largest positive integer such that $\mathcal{C}_{\geq N} =
\mathcal{C}$, i.e., each rational ramification curve is intersected at least $N$
times by Mori fibers. 
A curve $C \in \mathcal{C}$ which is intersected precisely $N$ times by
Mori fibers is referred to as a \emph{minimizing curve}. In the
following, let $C$ be a minimizing curve and let $ H =
\mathrm{Stab}_G(C) < G$ be the stabilizer of $C$ in $G$.
The index of $H$ in $G$ is bounded by $n= r_N$.

The rational curves
in  $\mathrm{Fix}_X(\sigma)$ generate a sublattice of
$\mathrm{Pic}(X)$ of signature $(0,n)$ and it therefore follows immediately that $n
\leq 19$.
A sharper bound $n \leq 16$ for the number of disjoint (-2)-curves on a K3-surface has been
obtained by Nikulin \cite{NikulinKummer}.  In our setup an even sharper bound is due to Zhang \cite[Theorem 3]{ZhangInvolutions}, stating that
the total number of connected curves in the fixed point set of an antisymplectic
involution on a K3-surface is bounded by 10.
In the following, we use Zhang's bound for the number $n$ of rational curves in $\mathrm{Fix}_X(\sigma)$,
\begin{equation}\label{atmostten}
n \leq 10.
\end{equation}
Note, however, that all results can likewise be obtained by using the weakest bound $n \leq 19$. 
For $N \geq 4 $ Zhang's bound can be sharpened using the notion of
Mori fibers and minimizing curves.
Using inequalities \eqref{moribound} and \eqref{boundforE_k} we find
\begin{equation}\label{boundforn}
\frac{N}{2}n = \frac{N}{2}r_N \leq |\mathcal{E}_{\geq N}|\leq
|\mathcal{E}| \leq  n +12 - e(Y_\mathrm{min})  \leq n+9
\end{equation}
In the following we consider the stabilizer $H$ of a minimizing curve
$C$ and using the above inequalities for $n$, we obtain bounds for the order $|G|$
of $G$ guaranteeing the absence of rational curves in $\mathrm{Fix}_X(\sigma)$.
\begin{proposition}
Let $X$ be a K3-surface with an action of a finite group $G \times \langle \sigma
\rangle$ such that $ G < \mathrm{Aut}_\mathrm{symp}(X)$ and $\sigma$ is
an antisymplectic involution with fixed points. If $| G | > 108$,
then $\mathrm{Fix}_X(\sigma)$ does not contain rational curves.
\end{proposition}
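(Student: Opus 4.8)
The plan is to suppose for contradiction that $\mathrm{Fix}_X(\sigma)$ contains a rational curve, so that $\mathcal C \neq \emptyset$ and $n \geq 1$, and to derive a bound on $|G|$ by a counting argument centered on a minimizing curve $C$ and its stabilizer $H = \mathrm{Stab}_G(C)$. Since $Y_\mathrm{min}$ is a Del Pezzo surface (here we use Lemma \ref{conicbundle} and $|G| > 96$), every rational branch curve, having self-intersection $-4$ by Remark \ref{selfintcovering}, must be touched by the Mori reduction, so $N \geq 1$ and the inequalities \eqref{moribound}, \eqref{boundforn}, and \eqref{atmostten} all apply. The index $[G:H]$ is at most $n = r_N \leq 10$; thus $|G| = [G:H]\cdot|H| \leq 10\,|H|$, and the task reduces to bounding $|H|$, i.e.\ bounding the order of the subgroup of $G$ stabilizing a single rational branch curve $C \cong \mathbb P_1$.

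First I would split into cases according to the value of $N$. For $N \geq 4$, inequality \eqref{boundforn} gives $\tfrac N2 n \leq n+9$, which forces $n \leq 2$ when $N = 5$ or $6$, and more strongly pins $N$ down; combined with the fact that a symplectic automorphism has order at most $8$ (Remark \ref{order of symp aut}), these small-$n$ cases should already give $[G:H] \leq 2$ and a good handle on $|H|$. For the generic case I would bound $|H|$ by looking at how $H$ acts on $C \cong \mathbb P_1$: the action on $C$ factors through a finite subgroup of $\mathrm{PGL}_2(\mathbb C)$, and the kernel of this action fixes $C$ pointwise, hence (being symplectic) must be trivial or the kernel cannot contain nontrivial symplectic automorphisms with a positive-dimensional fixed locus — so $H$ itself embeds into $\mathrm{PGL}_2(\mathbb C)$ and is therefore cyclic, dihedral, or one of $T_{12}, O_{24}, I_{60}$, again with element orders $\leq 8$. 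This caps $|H|$ at $24$ (as in the proof of Lemma \ref{conicbundle}: $I_{60}$ is excluded by the order-$5$ element argument, and cyclic/dihedral subgroups have order $\leq 16$). Then $|G| \leq 10 \cdot 24 = 240$, which is not yet good enough, so the counting must be sharpened.

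The sharpening, and the main obstacle, is to improve the crude bound $[G:H] \leq n$ using the Mori-fiber bookkeeping. The $N$ Mori fibers meeting $C$ are permuted by $H$, and each such Mori fiber $E$ meets $B$ in at most two points (Lemma \ref{at most two}), so $E$ meets at most two curves of $\mathcal C$; this ties the $G$-orbit of $C$ to the $G$-orbit of Mori fibers via the incidence set $P_N$, giving $|P_N| \geq N r_N = Nn$ while $\mathrm{pr}_N$ has fibers of size $\leq 2$, so $|\mathcal E_{\geq N}| \geq \tfrac N2 n$, and combined with \eqref{moribound} one gets the relation already recorded in \eqref{boundforn}. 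The point I would exploit further is that $H$ acts on the at most $\min(N \cdot (\text{something}), \ldots)$ Mori fibers through $C$, and transitivity of $G$ on the $r_N$ minimizing curves forces a compatible transitive-ish action, so either $n$ is small (handled above) or $|H|$ divides into the Mori-fiber count in a way that, together with $|H| \leq 24$ and element order $\leq 8$, produces $|G| \leq 108$. I expect the delicate part to be ruling out the borderline configurations — e.g.\ $n$ small with $H$ of order close to $24$ and $N$ large — where one must use the precise fixed-point counts from Remark \ref{order of symp aut} and the genus/Euler-characteristic constraint $e(X) = 24$ to close the gap; some case analysis on $(n, N, |H|)$ with these arithmetic inputs should finish it, and the constant $108$ is presumably exactly what this optimization yields.
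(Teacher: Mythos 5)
Your skeleton coincides with the paper's: assume a rational branch curve exists, take a minimizing curve $C$ with stabilizer $H=\mathrm{Stab}_G(C)$, bound the index $[G:H]$ by $n\leq 10$, and bring in the Mori-fiber inequalities. But the actual content of the proof --- how one bounds $|H|$ sharply enough to reach $108$ --- is exactly what you defer to ``some case analysis on $(n,N,|H|)$ \dots should finish it,'' and the mechanism you gesture at ($|H|$ ``divides into the Mori-fiber count'') is not the one that works. The paper's mechanism is that $H$ leaves invariant the set of $N$ points where $C$ meets the union of Mori fibers, and the minimal orbit lengths of finite subgroups of $\mathrm{Aut}(\mathbb P_1)$ (at least $2$ for dihedral groups, $4$ for $T_{12}$, $6$ for $O_{24}$, $12$ for $I_{60}$) then pin down $H$ case by case in $N$: for $N=1$ the point is $H$-fixed, so $H$ is cyclic of order $\leq 8$ and $|G|\leq 80$; for $N=3$ one gets $H\in\{1,C_2,C_3,D_6\}$; for $N=5$ one gets $H\leq D_{10}$; and for $N\geq 4$ inequality \eqref{boundforn} shrinks $n$. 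The genuinely delicate case is $N=2$ with $H$ dihedral (of order $\leq 12$ by Mukai's Proposition~3.10) and $n=10$, where the crude count gives $10\cdot 12=120>108$: there the paper argues geometrically that a Mori fiber meeting $C$ twice, or two Mori fibers meeting $C$ once each, would after contraction turn $C$ into a curve of self-intersection $0$ or $-2$, impossible on a Del Pezzo surface, so further Mori fibers are needed and the $G$-invariant configuration exceeds the bound \eqref{moribound}; hence either $H$ is cyclic or $n\leq 9$, yielding $9\cdot 12=108$. Nothing in your sketch produces this step, and without it the small-$N$ cases (where \eqref{boundforn} is useless) stall at $120$ or worse.

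Two of your intermediate claims are also false. First, $I_{60}$ is not excluded from the possible stabilizers: the order-$5$ argument you cite from Lemma \ref{conicbundle} used an element of order $5$ centralizing the \emph{ineffectivity} on the base of a conic bundle, whereas here $H$ acts effectively on $C$, and $A_5$ (all of whose elements have order $\leq 8$) does occur --- in the paper's case $N\geq 12$ one has $H=G=I_{60}$ of order $60$, rescued only because there $n=1$. So your cap $|H|\leq 24$ is unjustified as stated. Second, \eqref{boundforn} gives $n\leq 6$ for $N=5$ and $n\leq 4$ for $N=6$, not $n\leq 2$, so these cases do not reduce to $[G:H]\leq 2$; they are closed in the paper by the orbit-length constraint ($H\leq D_{10}$ when $N=5$) together with $|H|\leq 24$ when $N=6$.
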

\begin{proof}
 Let  $| G | > 108$ and assume on the contrary that $\mathrm{Fix}_X(\sigma)$ contains rational curves.
 We consider a minimizing curve $C \subset B$ and its stabilizer $\mathrm{Stab}_G(C) =:H$. Since a symplectic automorphism on $X$ does not admit a one-dimensional set of fixed points, it follows that the action of $H$ on $C$ is effective and $H$ is one the classical finite groups $C_n, D_{2n}, T_{12}, O_{24}, I_{60}$.
We recall the possible lengths of $H$-orbits in $C$: the length of an orbit of a dihedral group is at least two, the length of a $T_{12}$-orbit in $\mathbb P_1$ is at least four, the length of an $O_{24}$-orbit in $\mathbb P_1$ is at least six, and the length of an $I_{60}$-orbit in $\mathbb P_1$ is at least twelve.

 Let $Y_\mathrm{min}$ be a $G$-minimal model of $X/\sigma =Y$. Recall that by Lemma \ref{conicbundle} the surface $Y_\mathrm{min}$ is a Del Pezzo surface. Each rational branch curve is a (-4)-curve in $Y$. Since its image in $Y_\mathrm{min}$ has self-intersection $\geq -1$, it must intersect Mori fibers.
\begin{itemize}
 \item 
If $N=1$, i.e., the rational curve $C$ meets the union of Mori fibers in exactly one point $p$, then $p$ is a fixed point of the $H$-action on $C$. In particular, $H$ is a cyclic group $C_k$ and $k \leq 8 $ by Remark \ref{order of symp aut}. Since the index of $H$ in $G$ is bounded by $n \leq 10$, it follows that $|G | \leq 80$.
\item
If $N=2$, then $H$ is either a cyclic or a dihedral group. By Proposition 3.10 in \cite{mukai} the maximal order of a dihedral group of symplectic automorphisms on a K3-surface is 12. We first assume $H \cong D_{2m}$ and that the $G$-orbit $G.C$ of the rational branch curve $C$ has the maximal length $n=|G.C|=10$, i.e., $B = G . C$. 
Each curve in $G . C$ meets the union of Mori fibers in precisely two points forming an $D_{2m}$-orbit. If a Mori fiber $E_C$ meets the curve $C$ twice, then it follows from Lemma \ref{at most two} that $E$ meets no other curve in $B$. The contraction of $E$ transforms $C$ into a singular curve of self-intersection zero. The Del Pezzo surface $Y_\mathrm{min}$ does however not admit a curve of this type. It follows, that $E$ meets a Mori fiber $E'$ which is contracted in a later step of the Mori reduction and meets no other Mori fiber than $E'$.
The described configuration $G. E \cup G. E'$ requires a total number of at least 20 Mori fibers and therefore contradicts inequality \eqref{moribound}. If $C$ meets two distinct Mori fibers $E_1, E_2$, each of these two can meet at most one further curve in $B$. The contraction of $E_1$ and $E_2$ transforms $C$ into a (-2)-curve. As above, the existence of further Mori fibers meeting $E_i$ follows. Again, by invariance, the total number of Mori fibers exceeds 20, a contradiction. It follows that either $H$ is cyclic or $ |G.C|\leq 9$. Both cases imply $|G| \leq 108$. 
\item
If $N=3$, let $S=\{p_1,p_2,p_3\}$ be the points
of intersection of $C$ with the union $\bigcup E_i$ of Mori fibers. The set $S$ is
$H$-invariant. It follows that $H$ is either trivial or isomorphic to $C_2$, $C_3$
or $D_6$ and that $|G| \leq 60$
\item
If $N = 4$, it follows from inequality \eqref{boundforn} that $n \leq 9 $. Now $|H| \leq 12$ implies $|G| \leq 108$. 
\item
If $N=5$, the largest possible group acting on $C$ such
that there is an invariant subset of car\-di\-na\-li\-ty 5 is the dihedral
group $D_{10}$. Inequality \eqref{boundforn} implies $n \leq 6$, we conclude $|G| \leq 60$.
\item 
If $N=6$, then  $n \leq 4 $ and $|H| \leq 24$ implies $|G| \leq
96$. 
\item
If $N\geq 12$, then $n=1$ and $H =G$. The maximal order 60 is attained by
the icosahedral group. 
\item
If $6 < N <12$,
we combine $n \leq 4$ and $|H| \leq 24$ to obtain
$|G| \leq 96$.
\end{itemize}
The case by case discussion shows that the existence of a rational curve in $B$ implies $| G|\leq 108$ and  contradicts our assumption. The proposition follows.
\end{proof}
\begin{remark}
If the group $G$ under consideration is known not to contain certain subgroups (such as large dihedral groups or $T_{12}$, $O_{24}$ or $I_{60}$), then the condition $|G| >108$ in the proposition above can be improved and non-existence of rational ramification curves follows even for smaller groups $G$ (cf.\ Lemmata \ref{noratN72}, \ref{subgroupsM9}, and \ref{no rat T48}).
\end{remark}
\subsection{Elliptic branch curves}
The aim of this section is to determine conditions on the order of $G$ which allow us to exclude elliptic curves in $\mathrm{Fix}_X(\sigma)$. We prove:
\begin{proposition}\label{elliptic branch}
Let $X$ be a K3-surface with an action of a finite group $G \times \langle \sigma
\rangle$ such that $ G < \mathrm{Aut}_\mathrm{symp}(X)$ and $\sigma$ is
an antisymplectic involution with fixed points. If $| G | > 108$, 
then $\mathrm{Fix}_X(\sigma)$ contains neither rational nor elliptic ramification curves.
\end{proposition}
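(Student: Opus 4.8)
The plan is to turn an elliptic fixed curve into a $G$-equivariant elliptic fibration and then to bound $|G|$ by its kernel and by its action on the base. Suppose for contradiction that $\mathrm{Fix}_X(\sigma)$ contains an elliptic curve. By Nikulin's structure result quoted above, either $\mathrm{Fix}_X(\sigma)$ is a union of two linearly equivalent elliptic curves $C_1,C_2$, or it is a single curve together with a (possibly empty) union of rational curves; in the latter case the preceding proposition applies (as $|G|>108$) and kills all rational curves, so $\mathrm{Fix}_X(\sigma)=C$ with $C$ elliptic. In every case an elliptic curve $C\subset\mathrm{Fix}_X(\sigma)$ has $C^2=0$ by adjunction on $X$, so $|C|$ defines an elliptic fibration $\varphi\colon X\to\mathbb P_1$ (and in the two-curve case $C_1,C_2$ are two of its fibres). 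Since $[C]$, respectively the pair $\{\varphi(C_1),\varphi(C_2)\}$, is $G$-invariant, $\varphi$ is $G$-equivariant, giving an exact sequence $1\to G_0\to G\to\bar G\to 1$ with $\bar G<\mathrm{Aut}(\mathbb P_1)$ and $G_0$ acting along the fibres.

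I would bound $G_0$ first. A nontrivial $g\in G_0$ preserves every fibre; if its restriction to a general fibre had a nontrivial linear part, so would its restriction to every smooth fibre, giving $g$ a fixed point in every smooth fibre and hence infinitely many fixed points in $X$, impossible for a symplectic automorphism. Hence $G_0$ acts on the fibres by translations and embeds into a smooth fibre, so $G_0\cong C_a\times C_b$ with $b\mid a$; with Nikulin's list of finite abelian groups of symplectic transformations (\cite{NikulinFinite}) this forces $|G_0|\le 12$. If $\mathrm{Fix}_X(\sigma)=C$, then $\bar G$ fixes $\varphi(C)\in\mathbb P_1$, so $\bar G$ is cyclic; since its elements lift to elements of $G$ of order $\le 8$ (Remark~\ref{order of symp aut}), $|\bar G|\le 8$ and $|G|=|G_0|\,|\bar G|\le 96<108$, a contradiction. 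If $\mathrm{Fix}_X(\sigma)=C_1\sqcup C_2$, then $\bar G$ preserves $\{\varphi(C_1),\varphi(C_2)\}$ and is cyclic or dihedral, and the subgroup $G^{+}=\mathrm{Stab}_G(C_1)$ of index $\le 2$ has cyclic image of order $\le 8$ in $\mathrm{Aut}(\mathbb P_1)$; this only gives $|G^{+}|\le 96$ and $|G|\le 192$, so the range $108<|G|\le 192$ still needs to be excluded.

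To exclude it I would use $\sigma$. Since $C_1,C_2$ are \emph{smooth} fibres of $\varphi$ fixed pointwise by $\sigma$, a short argument with the étale $2$-torsion cover of $\varphi$ over its smooth locus shows $\bar\sigma\neq\mathrm{id}$: if $\bar\sigma=\mathrm{id}$, then $\sigma$ preserves each fibre and acts on a general one either with four fixed points — placing a four-section inside the one-dimensional set $\mathrm{Fix}_X(\sigma)$, impossible — or as translation by a monodromy-invariant $2$-torsion section which, specialising to $0$ at the smooth fibre $\varphi(C_1)$, must vanish identically, forcing $\sigma=\mathrm{id}$. Thus $\bar\sigma$ is the nontrivial involution of $\mathbb P_1$ fixing $\varphi(C_1)$ and $\varphi(C_2)$, and $Y=X/\sigma$ inherits a relatively minimal elliptic fibration $\bar\varphi\colon Y\to\mathbb P_1$ whose smooth fibres include the elliptic branch curves $B_i=\pi(C_i)$. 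A Del Pezzo surface admits no elliptic fibration — adjunction forces $K\cdot F=0$ on a general fibre $F$, contradicting ampleness of $-K$ — so $\bar\varphi$ cannot descend along the equivariant Mori reduction $Y\to Y_\mathrm{min}$; already its first step contracts disjoint $(-1)$-curves, and each, not lying in any fibre of the relatively minimal $\bar\varphi$, is a multisection, which by Lemma~\ref{at most two} meets $B=B_1\cup B_2$ in at most two points and hence has degree $1$ or $2$ over $\mathbb P_1$. Tracking these multisections and their $G$-orbits, and combining the bound~\eqref{moribound} on the number of Mori fibres, the orbit-length constraints for cyclic and dihedral groups on $\mathbb P_1$, and Nikulin's list of abelian symplectic groups — as in the proof of the preceding proposition — then forces $|G|\le 108$ here too.

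The step I expect to be the main obstacle is precisely this last configuration, $\mathrm{Fix}_X(\sigma)=C_1\sqcup C_2$ with $G$ interchanging the two elliptic curves: the naive estimate $|G|=2|G^{+}|\le 192$ is too weak, and bridging the gap $108<|G|\le 192$ forces one to feed the elliptic fibration structure on $Y$ into the Mori-fibre bookkeeping of Subsection~\ref{branch curves mori fibers}, in the style of the preceding proof. In the other cases — $\mathrm{Fix}_X(\sigma)=C$, and $\mathrm{Fix}_X(\sigma)=C_1\sqcup C_2$ with both $C_i$ individually $G$-invariant — the bound $|G|\le 96$ falls out immediately.
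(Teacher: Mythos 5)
Your idea of running everything through the elliptic fibration $\varphi=|C|$ is a genuinely different route from the paper, but as written it has two real gaps. First, the bound $|G_0|\le 12$ is incorrect: Nikulin's list of finite abelian groups of symplectic automorphisms contains $C_4\times C_4$ (order 16), which is generated by two elements and therefore does embed into an elliptic curve, so the translation argument only gives $|G_0|\le 16$. Consequently even your ``easy'' cases (a single elliptic curve, or two elliptic curves each individually $G$-invariant) only yield $|G|\le 16\cdot 8=128$, which does not contradict $|G|>108$; the range $108<|G|\le 128$ is left open, so these cases do not ``fall out immediately.'' Second, and more seriously, the configuration you yourself single out as the main obstacle --- $\mathrm{Fix}_X(\sigma)=C_1\sqcup C_2$ with $G$ interchanging the two curves --- is not actually proved: ``tracking these multisections \dots then forces $|G|\le 108$'' is a plan rather than an argument, and nothing in the proposal closes the gap between the naive bound $192$ and the target $108$. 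Since that is exactly where the content of the proposition lies, the proof is incomplete.

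For comparison, the paper's proof never uses the fibration and treats all cases uniformly by a stabilizer count downstairs: if $T\subset B\subset Y$ is an elliptic branch curve, its stabilizer $H$ has index at most $2$ in $G$, acts effectively on $T$, and after conjugation sits inside $\mathrm{Aut}(T)=L\ltimes T$ with $L\in\{C_2,C_4,C_6\}$; since $T^2=0$ and a Del Pezzo surface carries no elliptic curve of self-intersection zero, $T$ must meet some Mori fiber $E$, of which there are at most $9$ by \eqref{moribound} (there are no rational branch curves), so $\mathrm{Stab}_H(E)$ has index at most $9$ in $H$; finally Lemma \ref{at most two} together with the normal form of $\mathrm{Aut}(T)$ bounds this stabilizer by $6$ when $E$ meets $T$ once (and forces $C_2$ or $D_4$ when it meets $T$ twice), giving $|G|\le 6\cdot 9\cdot 2=108$ directly. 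If you want to salvage your approach you would need both a sharper bound on $G_0$ (ruling out or handling $C_4\times C_4$) and an actual execution of the Mori-fiber bookkeeping in the swapped-curves case; the paper's short stabilizer argument is precisely what makes that extra work unnecessary.
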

\begin{proof}
By the preceding proposition $\mathrm{Fix}_X(\sigma)$ does not contain rational curves. It follows from Ni\-ku\-lin's description of $\mathrm{Fix}_X(\sigma)$ in \cite{NikulinFix} that it is either a single curve of genus $g \geq 1$ or the disjoint union of two elliptic curves.

Let $T \subset B$ be an elliptic branch curve and let $H:= \mathrm{Stab}_G(T)$. If $H \neq G$, then $H$ has index two in $G$. The action of $H$ on $T$ is effective.
After conjugation the subgroup $H < \mathrm{Aut}(T)$ inherits the semidirect
product structure of $\mathrm{Aut}(T) = L \ltimes T$ for $L \in \{ C_2, C_4, C_6\}$. I.e.,
\[
H = (H \cap L) \ltimes  (H \cap T) .
\]
We refer to this decomposition as the \emph{normal form} of $H$.
By Lemma \ref{conicbundle} any $G$-minimal model of $Y$ is a Del Pezzo surface and therefore, by adjunction, does not admit elliptic curves with self-intersection zero. It follows that $T$ meets the union $\bigcup E_i$ of Mori fibers. Let $E$ be a Mori fiber meeting $T$. By Lemma \ref{at most two} their intersections fulfills $|T \cap  E |  \in \{1,2\}$. Since the total number of Mori fibers is bounded by 9 (cf.\ inequality \eqref{moribound}), the index of the stabilizer $\mathrm{Stab}_H(E)$ of $E$ in $H$ is bounded by 9.
If $T \cap E =\{p\}$, then $\mathrm{Stab}_H(E)$ is a cyclic group of order less than or equal to six and it follows that $|G | \leq 6\cdot 9 \cdot 2 = 108$.
If $T \cap E =\{p_1, p_2\}$, then $B \cap E = T \cap E$ and the stabilizer $\mathrm{Stab}_G(E)$ of $E$ in $G$ is contained in $H$. If both points $p_1,p_2$ are fixed by $\mathrm{Stab}_G(E)$, then $|\mathrm{Stab}_G(E)| \leq 6$. If $p_1,p_2$ form a $\mathrm{Stab}_G(E)$-orbit, then in the normal form $|\mathrm{Stab}_G(E) \cap T | =2$. It follows that $\mathrm{Stab}_G(E)$ is either $C_2$ or $D_4$. The index of $\mathrm{Stab}_G(E)$ in $G$ is bounded by 9 and $|G| \leq 54$.

In summary, the existence of an elliptic curve in $B$ implies $|G| \leq 108$.
\end{proof}
\subsection{Rough classification}
With the preparations of the previous subsections we may now turn to a classification result for K3-surfaces with antisymplectic involution centralized by a large group.
\begin{theorem}\label{roughclassi}
Let $X$ be a K3-surface with a symplectic action of a finite group $G$ centralized by an
antisymplectic involution $\sigma$ such that
$\mathrm{Fix}_X(\sigma)\neq \emptyset$. If $|G|>96$, then $X/\sigma$ is a $G$-minimal
Del Pezzo surface and $\mathrm{Fix}_X(\sigma)$ is a smooth connected curve $B$ with $ g(B)\geq
3$.
\end{theorem}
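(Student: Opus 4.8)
The plan is to assemble the statement from the three structural results already established in this section. First I would invoke Lemma \ref{conicbundle}: since $|G| > 96$, no $G$-minimal model $Y_\mathrm{min}$ of $Y = X/\sigma$ can be an equivariant conic bundle over $\mathbb P_1$, so by Proposition \ref{Gmmp} (and the fact that $Y$ is rational, hence its minimal model has $\mathcal K$ not nef) the surface $Y_\mathrm{min}$ is a Del Pezzo surface. Next, Proposition \ref{elliptic branch} applies because $|G| > 96$ implies $|G| > 108$ is \emph{not} automatic --- here is the one genuine gap to watch: Proposition \ref{elliptic branch} is stated for $|G| > 108$, while the theorem hypothesizes only $|G| > 96$. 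So the main work is to bridge the range $96 < |G| \leq 108$; in that range one must still rule out rational and elliptic curves in $\mathrm{Fix}_X(\sigma)$ by re-examining the case analysis in the proofs of the two preceding propositions, checking that the borderline orders $97,\dots,108$ do not actually occur as $|G|$ for a group of symplectic transformations on a K3-surface with the required orbit/stabilizer structure, or else by a direct argument.

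Granting that $\mathrm{Fix}_X(\sigma)$ contains no rational and no elliptic curves, Nikulin's description (\cite{NikulinFix}) of the fixed locus of an antisymplectic involution forces $\mathrm{Fix}_X(\sigma)$ to be a single smooth connected curve $B$ of genus $g(B) \geq 2$; the alternative of two disjoint elliptic curves and the possibility of extra rational components are both excluded. To upgrade $g(B) \geq 2$ to $g(B) \geq 3$ I would return to the Euler characteristic identity from Section \ref{branch curves mori fibers}: with $n = 0$ rational curves, $e(X) = 24 = 2e(Y) - e(B) = 2e(Y) - (2 - 2g(B))$, i.e. $e(Y) = 11 + (1 - g(B)) = 12 - g(B)$. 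Since $Y_\mathrm{min}$ is Del Pezzo we have $e(Y_\mathrm{min}) \geq 3$ and $e(Y) = e(Y_\mathrm{min}) + m \geq 3$, giving $g(B) \leq 9$; but this is an upper bound, not the lower bound we want. For the lower bound $g(B) \geq 3$ I would argue that if $g(B) \leq 2$ then $e(Y) \geq 10$, forcing $e(Y_\mathrm{min}) + m \geq 10$, and then combine the constraint $m \leq 12 - e(Y_\mathrm{min})$ (from inequality \eqref{moribound} with $n=0$) with a lower bound on the number of Mori fibers coming from the fact that $B$ is a $(-4)$-curve in $Y$ that must be resolved into a curve of self-intersection $\geq 2g(B) - 2 \geq -1$ (actually, of self-intersection $\geq -2$ but on a Del Pezzo, $\geq -1$) on $Y_\mathrm{min}$ --- so $B$ meets enough Mori fibers --- together with the order bound $|G| > 96$ acting on $B$ and its stabilizer structure, producing a contradiction with $e(Y)$ too large or $|G|$ too small.

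Concretely, the genus step runs parallel to the rational-curve analysis: let $C = B$ (now the unique branch curve, $H = \mathrm{Stab}_G(B) = G$ since $G$ cannot swap a curve with itself nontrivially), acting effectively on $B$ as a group of automorphisms of a genus-$g$ curve. If $g(B) = 2$, then $|\mathrm{Aut}(B)| \leq 48$, already contradicting $|G| > 96$. If $g(B) = 1$, $B$ is elliptic and this case was handled; if $g(B) = 0$ it was handled too. So the only survivor among $g(B) \leq 2$ is impossible, which gives $g(B) \geq 3$ immediately --- this is cleaner than the Euler-characteristic route and I would use it. The remaining obstacle is thus squarely the range $96 < |G| \leq 108$ in Propositions for rational and elliptic curves; I expect this to be dispatched by noting that in that narrow window the case analysis already yields $|G| \leq$ one of the listed smaller bounds ($80$, $96$, $60$, \dots) in every subcase \emph{except} those that happen to output exactly $108$, and those rely on $H$ being a dihedral group $D_{2m}$ of order $12$ together with $n = 9$, a configuration one can rule out separately (e.g.\ $D_{24}$ is not a symplectic group on a K3 by \cite{mukai}, Proposition 3.10, and smaller dihedral stabilizers force $|G| \le 96$). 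Putting these pieces together yields the theorem.
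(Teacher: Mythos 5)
There are two genuine gaps. First, the window $96<|G|\leq 108$, which you correctly flag, is not actually closed by your proposed patch: the surviving configurations in the proofs of the two exclusion propositions are precisely a stabilizer $H\cong D_{12}$ (order $12$) of a rational branch curve with orbit length $9$, and the elliptic-curve bound $6\cdot 9\cdot 2=108$, and neither is eliminated by remarking that $D_{24}$ is not symplectic or that ``smaller dihedral stabilizers force $|G|\leq 96$'' (order $12\cdot 9=108$ is exactly the case left standing). The paper disposes of the whole issue in one line, along the lines of the first option you mention but do not carry out: by Mukai's theorem $G$ embeds into one of the eleven groups of Table \ref{TableMukai}, and none of these groups has a subgroup of order strictly between $96$ and $120$ (Lagrange plus the orders $168,360,120,960,384,288,192,192$), so $|G|\geq 120>108$ and Propositions on rational and elliptic branch curves apply verbatim; the same bound $|G|\geq 120$ then gives $g(B)\geq 3$ directly from Hurwitz (your genus-$2$ bound $|\mathrm{Aut}(B)|\leq 48$ also works, granted that $G$ acts effectively on $B$, which holds because a nontrivial symplectic automorphism has only isolated fixed points).

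Second, the conclusion of the theorem is that $X/\sigma=Y$ \emph{itself} is a ($G$-minimal) Del Pezzo surface, not merely that some $G$-minimal model $Y_{\mathrm{min}}$ is; your proposal only establishes the latter and never rules out Mori fibers on $Y$. The paper's argument for this step is: if $Y$ were not $G$-minimal, take a Mori fiber $E$ with $E^2=-1$; by Remark \ref{self-int of Mori-fibers} it meets $B$ in one or two points, and in either case $\mathrm{Stab}_G(E)$ acts effectively on $E$ (in the tangency case one linearizes at the point $E\cap B$ and uses that a symplectic element cannot fix a neighbourhood of a point of $R=\pi^{-1}(B)$), so $\mathrm{Stab}_G(E)$ is cyclic or dihedral of order at most $12$; with $|G|\geq 120$ its $G$-orbit contains at least $10$ Mori fibers, contradicting $m\leq 9$ from inequality \eqref{moribound} with $n=0$. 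Without some such argument your proof does not reach the stated conclusion. (A small side error: in your Euler-characteristic sketch you call $B$ a $(-4)$-curve in $Y$; only \emph{rational} branch curves have self-intersection $-4$, while here $B^2=2(2g(B)-2)>0$ --- though you abandon that route anyway.)
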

\begin{proof}
The group $G$ is a subgroup of one of the eleven maximal finite groups of symplectic transformations on Mukai's list \cite{mukai} (cf.\ Table \ref{TableMukai}).
None of these groups can have a subgroup $G$ with $96 < |G| < 120$.
In particular, the order of $G$ is at least 120.
We may therefore apply the results of the previous two sections and conclude that $\pi: X \to X/\sigma = Y$ is branched along a single smooth curve $B$ of general type. Its genus $g(B)$ must be greater than or equal to three by Hurwitz's formula.
It remains to show that $Y$ is $G$-minimal.

Assume on the contrary that $Y$ is not $G$-minimal. Then there exists a
Mori fiber $E \subset Y$ with $E^ 2= -1$.
By Remark \ref{self-int of Mori-fibers} it intersects the branch curve $B$ in one or two points. Let $\mathrm{Stab}_G(E)$ denote the stabilizer of $E$ in $G$. 

If $\pi^{-1}(E)$ is reducible its two irreducible components meet transversally in one point corresponding to $\{p\} =E \cap B$. The curve $E$ is tangent to $B$ at $p$ and we consider the linearization of the action of $\mathrm{Stab}_G(E)$ at $p$. If the action of $\mathrm{Stab}_G(E)$ on $E$ is not effective, the linearization of the ineffectivity $I < \mathrm{Stab}_G(E)$ yields a trivial action of $I$ on the tangent line of $B$ at $p$. It follows that the action of $I$ is trivial in a neighbourhood of $\pi^{-1}(p) \in R =\pi^{-1}(B)$. This is contrary to the assumption that $G$ acts symplectically on $X$.  
Consequently, the action of  $\mathrm{Stab}_G(E)$ on $E$ is effective and in particular, $\mathrm{Stab}_G(E)$ is a cyclic group. 

If $\pi^{-1}(E)$ is irreducible, then it is a smooth rational curve with an effective action of $\mathrm{Stab}_G(E)$. It follows that $\mathrm{Stab}_G(E)$ is either cyclic or dihedral. 

We conclude that the order of  $\mathrm{Stab}_G(E)$ is bounded by 12 and the index of $G_E$ in $G$ is strictly greater than nine. By inequality \eqref{moribound} the total number $m$ of Mori fibers however satisfies $m \leq 9$. This contradiction shows that $Y$ is $G$-minimal and, in particular, a Del Pezzo surface. 
\end{proof}
\begin{remark}\label{stab of minus one curve}
Let $X$ be a K3-surface with a symplectic action of $G$ centralized by an
antisymplectic involution $\sigma $ with $\mathrm{Fix}_X(\sigma) \neq \emptyset$ and let $E$ be a (-1)-curve on $Y = X/\sigma$. Then the argument above can be applied to see that the stabilizer of $E$ in $G$ is cyclic or dihedral and therefore has order at most 12.
\end{remark}
In the following section, the classification above is applied and extended to the case where $G$ is a maximal group of symplectic transformations on a K3-surface. 
%
%
%%%%%%%%%%%%%%%
%%% MAXIMAL %%%
%%%%%%%%%%%%%%%
%
\section{Maximal finite groups of symplectic transformations}\label{maximal}
In this section we consider K3-surfaces with a symplectic action of one of the eleven groups from Mukai's list (Table \ref{TableMukai}) centralized by an antisymplectic involution and prove the following classification result.
\begin{theorem}\label{thm mukai times invol}
Let $G$ be a group from Table \ref{TableMukai} acting on a K3-surface $X$ by symplectic transformations and $\sigma $ be an antisymplectic involution on $X$ centralizing $G$ with $\mathrm{Fix}_X(\sigma) \neq \emptyset$. Then the pair $(X,G)$ is equivariantly isomorphic to a surface in Table \ref{Mukai times invol}. In particular, for the groups $G$ numbered 4-8 on Mukai's list, there does not exist a K3-surface with an action of $G \times C_2$ satisfying the properties above. 
\end{theorem}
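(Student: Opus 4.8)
By Theorem~\ref{roughclassi}, since every group in Table~\ref{TableMukai} that could satisfy the hypotheses has order $\geq 120 > 96$, the quotient $Y = X/\sigma$ is a $G$-minimal Del Pezzo surface and $B = \mathrm{Fix}_X(\sigma)$ is a single smooth connected curve of genus $g(B) \geq 3$. The strategy is therefore: (1) determine which $G$-minimal Del Pezzo surfaces $Y$ admit an effective action of the given group $G$; (2) for each such $(Y,G)$, determine which $G$-invariant smooth curves $B$ can serve as branch locus of a double cover $X \to Y$ with $X$ a K3-surface; (3) recover $X$ explicitly as the double cover and identify it with the corresponding entry of Table~\ref{Mukai times invol}, or derive a contradiction. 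For groups 4--8 the conclusion is that step (1) or step (2) fails, yielding non-existence.

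\emph{Step (1): locating the Del Pezzo surface.} For each $G$ I would use the constraint that $G$ embeds into $\mathrm{Aut}(Y)$ for a Del Pezzo surface $Y$ of degree $d$, $1 \leq d \leq 9$. Since the double cover must be a K3-surface, adjunction forces $B \in |{-2K_Y}|$, so $B^2 = 4K_Y^2 = 4d$ and $g(B) = d+1$ by the genus formula on a Del Pezzo surface. Combined with $g(B) \geq 3$ this already gives $d \geq 2$, and the restriction $g(B) \leq 10$ (or the finer information from the Euler characteristic count $e(X)=24 = 2e(Y) - e(B)$, i.e. $e(Y) = 12 + (2g(B)-2)/2 + \dots$, forcing $e(Y) = 11 - d + \ldots$; more simply $e(Y) = 3 + (9-d)$ for a blow-up of $\mathbb{P}_2$) pins down the possible degrees. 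Then I would invoke Dolgachev's tables (\cite{dolgachev}, Ch.~10) of automorphism groups of Del Pezzo surfaces: $L_2(7)$ forces $d=2$ (the degree-2 Del Pezzo with the Klein quartic as branch curve, giving 1b) or $d=6$ is too small — but one must also keep the degree-9 case $Y = \mathbb{P}_2$ separately, which is not Del Pezzo-minimal under a large group unless $G$ fixes no point, etc.; $A_6$ and $M_9$ force $d=2$; $S_5$ gives the degree-5 Del Pezzo (the quintic threefold section, 3a) or $d=2$ (3b); $N_{72}$ the degree-4 Del Pezzo $\subset \mathbb{P}_4$ (case 9); $T_{48}$ the degree-2 Del Pezzo (11a) or a degree-1 Del Pezzo (11b), etc. The key point is that the groups numbered 4--8 ($M_{20}, F_{384}, A_{4,4}, T_{192}, H_{192}$) simply do not occur as subgroups of $\mathrm{Aut}(Z)$ for any Del Pezzo surface $Z$ of the admissible degrees — this is where Dolgachev's classification does the work.

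\emph{Step (2) and (3): the branch curve and reconstruction.} Having fixed $(Y,G)$, the branch curve $B$ must be a $G$-invariant smooth member of $|{-2K_Y}|$. I would analyze the $G$-action on the (projective) space $|{-2K_Y}|$ — equivalently on an appropriate space of sections, typically using the anticanonical or bicanonical embedding so that $B$ is cut out by an explicit $G$-invariant form — and show that up to the $G$-action there is a unique (or a short explicit list of) smooth invariant curve(s), giving the explicit equations in Table~\ref{Mukai times invol}. Smoothness of $B$ must be checked, and one must verify the resulting double cover is indeed a K3 (i.e. $B$ is reduced, smooth, in $|{-2K_Y}|$) and carries the full $G \times C_2$-action with $\sigma$ the covering involution. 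For the degree-2 cases the construction is literally ``double cover of $\mathbb{P}_2$ branched along a $G$-invariant sextic'', and one lists the invariant sextics; for degree 1, the extra blow-up accounts for the desingularization in 3b and the $\mathbb{P}(1,1,2,3)$-model in 11b. Finally, equivariant isomorphism of $(X,G)$ to the table entry follows since the Del Pezzo, the linear system, and the invariant curve are all determined up to $G$-equivariant isomorphism.

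\emph{Main obstacle.} The hard part is step (1) for the borderline cases and, more importantly, making the non-existence for groups 4--8 airtight: one must rule out \emph{every} admissible Del Pezzo degree, including the possibility that $G$ acts on $\mathbb{P}_2$ or $\mathbb{P}_1 \times \mathbb{P}_1$ (degrees 9 and 8) with the branch curve not an anticanonical-type section but something forcing a different $e(Y)$ — though the K3 condition $B \in |{-2K_Y}|$ combined with $g(B)\geq 3$ should eliminate the large-degree surfaces cleanly. For the groups that \emph{do} occur, the delicate computational point is proving uniqueness of the smooth $G$-invariant branch curve inside $|{-2K_Y}|$ and verifying its smoothness, which is where the explicit invariant-theory computations (decomposing the relevant representation of $G$ and locating the unique invariant with smooth zero locus) are unavoidable.
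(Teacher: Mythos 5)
There is a genuine gap, and it starts at the very first line of your plan. Theorem \ref{roughclassi} requires $|G|>96$, but three of the eleven groups in Table \ref{TableMukai} — $N_{72}$, $M_9$ and $T_{48}$ — have orders $72$, $72$ and $48$, and they \emph{do} satisfy the hypotheses (they produce entries 9, 10, 11a, 11b of Table \ref{Mukai times invol}). For these groups you cannot quote the rough classification; the paper has to prove from scratch that a $G$-minimal model is a Del Pezzo surface (excluding conic bundles via the $C_3\times C_3$-subgroup for $N_{72}$ and $M_9$, and for $T_{48}$ even carrying Hirzebruch surfaces $\Sigma_k$, $k>2$, along until the end), that $\mathrm{Fix}_X(\sigma)$ contains no rational or elliptic curves (Lemmata \ref{noratN72}, \ref{subgroupsM9}, \ref{no rat T48}), and that the possibly non-minimal quotient $Y$ has no, or only controlled, Mori fibers — a long case analysis on $m$ in the $T_{48}$ case. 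None of this is in your outline, so as written your proof covers at most the groups of order $>96$.

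The second gap is the non-existence half for groups 4--8. Your mechanism — ``they do not occur as subgroups of $\mathrm{Aut}(Z)$ for any admissible Del Pezzo $Z$, by Dolgachev's classification'' — does not work, because the admissible surfaces include $\mathbb P_2$ and $\mathbb P_1\times\mathbb P_1$, whose automorphism groups are infinite and are not covered by Dolgachev's tables; deciding whether, say, $T_{192}$ or $H_{192}$ sits inside $(\mathrm{PSL}_2\times\mathrm{PSL}_2)\rtimes C_2$ is a nontrivial question that the paper in fact does \emph{not} settle negatively. Only $M_{20}$ dies by the pure genus/degree bound ($g(B)\geq 12$ versus $g(B)=d+1\leq 10$). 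For $F_{384}$, $A_{4,4}$, $T_{192}$, $H_{192}$ the paper instead analyzes the branch curve itself: it applies Riemann--Hurwitz to the quotient of $B$ by the normal abelian subgroup ($C_4^2$, $C_2^4$, $C_2^3$), uses that $S_4$ acts on no curve of genus 1 or 2 and that cyclic isotropy forces divisibility constraints on the branch-point contribution, and for $T_{192}$, $H_{192}$ on $\mathbb P_1\times\mathbb P_1$ runs a structural argument ($A=\tilde G\cap N\cong C_2\times C_2$, a factor-swapping involution commuting with an order-3 element, the list of $\lambda$-invariant bidegree-$(4,4)$ monomials) against Nikulin's count of six fixed points for a symplectic automorphism of order 3. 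Without arguments of this kind your non-existence claim is unsupported. A further, smaller, issue: uniqueness of the smooth invariant curve in $|-2K_Y|$ is false as stated in at least one existence case — for $M_9$ on $\mathbb P_2$ there are three invariant sextics, and the two spurious ones are eliminated only by checking that the lift of $M_9$ to the corresponding double cover cannot be symplectic (a linearization/determinant computation at a fixed point), so the symplectic-lifting verification you mention in passing is actually a load-bearing part of the classification, not a routine check.
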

For the proof of this theorem we consider each group separately, show that any $G$-minimal model $Y_\mathrm{min}$ of the quotient surface $Y = X / \sigma$ is a Del Pezzo surface, and investigate which Del Pezzo surfaces admit an action of the group $G$.

It is then essential to study the branch locus $B$ of the covering $X \to Y$. As a first step, we exclude rational and elliptic curves in $B$ by studying their images in $Y_\mathrm{min}$ and their intersection with the union of Mori fibers. 
We then deduce that $B$ consists of a single curve of genus $\geq 2$ with an effective action of the group $G$. The possible genera of $B$ are restricted by the nature of the group $G$ and the Riemann-Hurwitz formula for the quotient of $B$ by an appropriate normal subgroup $N$ of $G$. The equations of $B$ or $X$ given in Table \ref{Mukai times invol} are derived using invariant theory.

Throughout the remainder of this chapter, the Euler characteristic formula
\begin{equation}\label{eulerchar}
 24 = e(X) = 2 e(Y_\mathrm{min}) + 2 m -2n +\underset{\text{if branch curve with }g\geq 2 \text{ exists}}{\underbrace{(2g-2)}}
\end{equation}
is exploited at various points.
Here $m$ denotes the total number of Mori contractions of the reduction $Y \to Y_ \mathrm{min}$, the total number of rational branch curves is denoted by $n$ and $g$ is the genus of a non-rational branch curve.
\subsection*{Equivariant equivalence}
Let us briefly formalize the notion of equivariant equivalence.
\begin{definition}\label{equivariantequivalence}
Let $(X_1, \sigma_1)$ and $(X_2, \sigma_2)$ be K3-surfaces with antisymplectic involution and let $G$ be a finite group acting on $X_1$ and $X_2$ by 
$\alpha_i: G \to \mathrm{Aut}_\mathrm{symp}(X_i)$,
such that $\alpha_i(g) \circ \sigma_i = \sigma_i \circ \alpha_i(g)$ for $i =1,2$ and all $g \in G$.
Then the surfaces $(X_1, \sigma_1)$ and $(X_2, \sigma_2)$ are considered \emph{equivariantly equivalent} or \emph{equivariantly isomorphic} if there exist a biholomorphic map $\varphi: X_1 \to X_2$ and a group automorphism $\psi \in \mathrm{Aut}(G)$ such that
\[
 \alpha_2(g) \varphi (x) = \varphi ( \alpha_1(\psi(g))x) \quad \text{and} \quad \sigma_2(\varphi (x)) = \varphi ( \sigma_1(x)).
\]
for all $x \in X_1$ and all $g \in G$.

Two (rational) surfaces $Y_1$ and $Y_2$
with actions 
$\alpha_i: G \to \mathrm{Aut}(Y_i)$ of a finite group $G$
are considered equivariantly equivalent if there exist a biholomorphic map $\varphi: Y_1 \to Y_2$ and a group automorphism $\psi \in \mathrm{Aut}(G)$ such that
$\alpha_2(g) \varphi (y) = \varphi ( \alpha_1(\psi(g))y)$
for all $y \in Y_1$ and all $g \in G$.
\end{definition}
\begin{remark}
If two K3-surfaces $(X_1, \sigma_1)$ and $(X_2, \sigma_2)$ are $G$-equivariantly equivalent, then the quotient surfaces $X_i / \sigma_i$ are equivariantly equivalent with respect to the induced action of $G$. 
\\
Conversely, let $Y$ be a rational surface with two actions of a finite group $G$ which are equivalent in the above sense and let $\varphi \in \mathrm{Aut}(Y)$ be the isomorphism identifying these two actions. We consider a smooth $G$-invariant curve $B$ linearly equivalent to $-2K_Y$ and the K3-surfaces $X_B$ and $X_{\varphi(B)}$ obtained as double covers branched along $B$ and $\varphi(B)$ equipped with their respective antisymplectic covering involution. 
If all elements of the group $G$ can be lifted to symplectic transformations on $X_B$ and $X_{\varphi(B)}$, then the central degree two extensions $E$ of $G$ acting on $X_B$, $X_{\varphi(B)}$, respectively, split as $E = E_\mathrm{symp} \times C_2$ with $E_\mathrm{symp} =G$.
In this case the group $G$ acts by symplectic transformations on 
$X_B$ and $X_{\varphi(B)}$ and
since for each $g \in G \subset \mathrm{Aut}(Y)$ 
there is only one choice of symplectic lifting $\tilde g \in E$
 the surfaces $X_B$ and $X_{\varphi(B)}$ are $G$-equivariantly equivalent in strong sense introduced above.
\end{remark}
%
%\subsection{The group $L_2(7)$}
\subsection{The group \texorpdfstring{$L_2(7)$}{L2(7)}}
Let $G \cong L_2(7) =\mathrm{PSL}(2, \mathbb F_7)=\mathrm{GL}_3(\mathbb F_2)$ be the finite simple group of order 168 acting on a K3-surface $X$. As $G$ is simple this action is effective and symplectic. Let $\sigma$ be an antisymplectic involution on $X$ centralizing $G$. Since $G$ has an element of order seven, which by Remark \ref{order of symp aut} has precisely three fixed points $p_1, p_2,p_3$ in $X$, and $\sigma$ acts on this set of three points, we know that $\mathrm{Fix}_X(\sigma) \neq \emptyset$.
I.e., the assumption $\mathrm{Fix}_X(\sigma) \neq \emptyset$ in Theorems \ref{roughclassi} and \ref{thm mukai times invol} is implicitly true for this particular group.

The classification result for the group $G = L_2(7)$ can be deduced from a description of K3-surfaces with $C_3 \ltimes C_7$-symmetry obtained in \cite{FKH1}, Theorem 2 (cf.\ also Chapter 5 in \cite{kf}, Theorem 5.5). Using the same arguments but restricting our consideration to the larger group $L_2(7)$ we present a simplified proof of the following proposition.
\begin{proposition}\label{classiL2(7)}
 Let $X$ be a K3-surface with a symplectic action of the group $L_2(7)$ centralized by an antisymplectic involution $\sigma$. Then $X$ is equivariantly isomorphic to either $C_\mathrm{Klein} = \{x_1^3x_2+x_2^3x_3+x_3^3x_1+x_4^4 =0\} \subset \mathbb P_3$ or the
double cover of $\mathbb P_2$ branched along $\mathrm{Hess}(C_\mathrm{Klein})$.
\end{proposition}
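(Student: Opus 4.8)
The plan is to run the equivariant minimal model program on $Y = X/\sigma$ and exploit the exceptional properties of $L_2(7)$. First, since $|L_2(7)| = 168 > 96$, Theorem \ref{roughclassi} applies directly: $Y$ is a $G$-minimal Del Pezzo surface and $B = \mathrm{Fix}_X(\sigma)$ is a smooth connected curve of genus $g(B) \geq 3$, with $B \in |-2K_Y|$. So the first real task is to identify which Del Pezzo surfaces admit a $G$-minimal action of $L_2(7)$. Here I would invoke the classification of automorphism groups of Del Pezzo surfaces (Dolgachev, \cite{dolgachev}, Chapter 10): $L_2(7)$ embeds into $\mathrm{Aut}(Z)$ only for very few $Z$. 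The simple group $L_2(7)$ has a well-known 3-dimensional projective representation, which immediately points to the degree-2 Del Pezzo surface (double cover of $\mathbb{P}_2$ branched along a quartic, with $L_2(7)$ acting on $\mathbb{P}_2$ via its order-168 action and fixing the Klein quartic — wait, the branch quartic must be $L_2(7)$-invariant, and the Klein quartic is the unique such smooth quartic), and to the degree-... well, actually the Klein quartic threefold picture: the quartic surface $C_\mathrm{Klein} \subset \mathbb{P}_3$ carries the $L_2(7)$-action through a $4$-dimensional representation. I expect exactly two cases survive: $Y \cong \mathbb{P}_2$ (degree 9, but this is not $G$-minimal unless... actually $\mathbb{P}_2$ with the $L_2(7)$-action has no invariant $(-1)$-curves since it has none, so it IS $G$-minimal) and one other Del Pezzo surface of lower degree.

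Second, for each admissible $(Y, G)$ I would determine $B \in |-2K_Y|$ using invariant theory. When $Y = \mathbb{P}_2$, $-2K_Y = \mathcal{O}(6)$ and $B$ is a smooth $L_2(7)$-invariant sextic; the ring of invariants of the order-168 action of $L_2(7)$ on $\mathbb{C}[x_1,x_2,x_3]$ is classically known (Klein), generated by the quartic $C_4 = x_1^3x_2 + x_2^3x_3 + x_3^3x_1$, a sextic, a fourteenth-degree form, and a twenty-first-degree form. The degree-6 invariant is the Hessian $\mathrm{Hess}(C_4)$, so up to scalar $B = \{\mathrm{Hess}(C_\mathrm{Klein}) = 0\}$ and the double cover is exactly example 1b. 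When $Y$ is the other Del Pezzo surface, unwinding the anticanonical embedding should reproduce $C_\mathrm{Klein} \subset \mathbb{P}_3$ (the double cover of that $Y$ branched along the appropriate anticanonical curve is equivariantly the Klein quartic surface with its $L_2(7)$-action coming from the $4$-dimensional irreducible representation). I would check genus: Hurwitz/adjunction gives $g(B) = 1 + \tfrac12 B(B+K_Y) = 1 + K_Y^2 \cdot 3$, so $g(B) = 3$ when $\deg Y = 2$ and the genus works out consistently.

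Third, I need the converse/rigidity statements: that each of these two configurations actually produces a valid K3 with commuting $L_2(7)$ and $\sigma$, and that the $L_2(7)$-action lifts symplectically (the central $C_2$-extension splits). For the lifting I would use that $L_2(7)$ is simple with Schur multiplier $C_2$, so either the extension splits or it is $\mathrm{SL}(2,7) = 2.L_2(7)$; since a genuine $2.L_2(7)$ would contain elements of order $> 8$ or would contradict the structure of $\mathrm{Aut}_\mathrm{symp}(X)$ (Mukai), the extension splits and $L_2(7)$ acts symplectically. Uniqueness of the branch curve in its linear system (the invariant sextic and the invariant quartic threefold-section are each unique up to scalar because the relevant invariant space is $1$-dimensional) then gives that $(X,G)$ is determined up to equivariant isomorphism, so these are exactly the two surfaces listed.

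The main obstacle I anticipate is the Del Pezzo bookkeeping in the middle step: ruling out all Del Pezzo surfaces other than the two that occur, i.e., showing $L_2(7)$ cannot act $G$-minimally (with the required invariant anticanonical-squared curve being smooth and of the right genus) on any Del Pezzo of degree $1, 3, 4, 5, 6, 8$ or on $\mathbb{P}_1 \times \mathbb{P}_1$. This is where one leans hardest on the order constraints ($|G|$ must divide $|\mathrm{Aut}(Z)|$, and $G$ must act with no invariant $(-1)$-curve while the configuration of $(-1)$-curves on each $Z$ carries a known $\mathrm{Aut}(Z)$-action — e.g. on a cubic surface there are 27 lines and $L_2(7) \not\hookrightarrow W(E_6)$ suitably, on a degree-1 Del Pezzo the Weyl group is $W(E_8)$ etc.) together with the fact that $\mathbb{P}_2$ and the degree-2 surface are essentially the only faithful low-dimensional habitats of $L_2(7)$. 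The smoothness of the invariant branch curve (needed to apply Theorem \ref{roughclassi}'s conclusion and to get a K3) must also be checked case by case, but for the two surviving cases it reduces to the classical smoothness of the Klein quartic and of its Hessian sextic.
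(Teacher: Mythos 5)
Your overall strategy is the same as the paper's (apply Theorem \ref{roughclassi}, classify the possible Del Pezzo quotients, pin down the branch curve by invariant theory, and split the degree-two central extension to lift $L_2(7)$ symplectically), but as written it has genuine gaps. The largest is the one you yourself flag: the exclusion of all Del Pezzo surfaces other than $\mathbb P_2$ and the degree-2 surface is never carried out, and the tools you list do not suffice for every case. Orbit counting on $(-1)$-curves (maximal subgroups of $L_2(7)$ have index $7$ and $8$, so orbits have length $7,8,14,21,\dots$) kills degrees $3,5,6$, and a Weyl-group order argument ($7\nmid|W(D_5)|$) or the paper's blow-down argument (two orbits of length $8$ are numerically possible on a degree-4 surface, so one must use that the stabilizer $C_3\ltimes C_7$ has no faithful $2$-dimensional representation) kills degree $4$; but degree $1$ is \emph{not} excluded this way, since $7$ divides $|W(E_8)|$ — the paper handles it via the base point of $|-K_Y|$, which is fixed by $\mathrm{Aut}(Y)$, and the absence of a faithful $2$-dimensional representation of $L_2(7)$. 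Also note that the proposition does not assume $\mathrm{Fix}_X(\sigma)\neq\emptyset$, so before invoking Theorem \ref{roughclassi} you must verify it (the paper does: an order-7 symplectic element has exactly three fixed points, $\sigma$ permutes them, and $3$ is odd).

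Two further points need repair. In the degree-2 case you must show that the branch curve $B\in|-2K_Y|$ of $X\to Y$ actually coincides with the preimage of the Klein quartic (the ramification curve of the anticanonical double cover $Y\to\mathbb P_2$); the paper does this by noting that two distinct members of $|-2K_Y|$ meet in at most $(-2K_Y)^2=8$ points, while any $G$-invariant finite set on the curve has at least $21$ points. Your appeal to a one-dimensional invariant subspace could be made to work (one must check $H^0(Y,-2K_Y)^G$ is spanned by that single section, e.g.\ since $\mathrm{Sym}^2$ of the $3$-dimensional representation has no trivial summand), but it is only gestured at; and your description of the resulting surface is off: $L_2(7)$ has no $4$-dimensional irreducible representation (its degrees are $1,3,3,6,7,8$) — the surface $\{x_1^3x_2+x_2^3x_3+x_3^3x_1+x_4^4=0\}\subset\mathbb P_3$ carries the $3\oplus1$ action and arises as the \emph{cyclic degree-4} cover of $\mathbb P_2$ branched along $C_{\mathrm{Klein}}$. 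Finally, your splitting argument is shaky: elements of order $8$ are permitted for symplectic automorphisms, and the extension $E$ contains the antisymplectic covering involution, so the Mukai bound does not directly rule out $2.L_2(7)$. The clean argument (the paper's) is that $E_{\mathrm{symp}}$ is a proper normal subgroup of index at most two mapping onto the simple group $L_2(7)$, hence $E_{\mathrm{symp}}\cong L_2(7)$ and $E=E_{\mathrm{symp}}\times C_2$; equivalently, $2.L_2(7)$ is perfect and so admits no antisymplectic character.
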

We begin with a classification of the quotient surface $Y = X / \sigma$.
\begin{lemma}
The quotient surface $Y$ is either $\mathbb P_2$ or a Del Pezzo surface of degree 2.
\end{lemma}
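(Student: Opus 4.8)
The plan is to combine the rough classification with the classification of $G$-minimal Del Pezzo surfaces and to exploit that $G=L_2(7)$ is simple, non-abelian and contains an element of order $7$. Since $|L_2(7)|=168>96$ and $\mathrm{Fix}_X(\sigma)\neq\emptyset$, Theorem~\ref{roughclassi} applies: $Y=X/\sigma$ is a $G$-minimal Del Pezzo surface of some degree $d=K_Y^2$, and $B:=\mathrm{Fix}_X(\sigma)$ is a smooth connected curve with $g(B)\geq 3$. I first note that $G$ acts faithfully on $Y$: the kernel of $G\to\mathrm{Aut}(Y)$ is normal, hence trivial or all of $G$ by simplicity, and it cannot be all of $G$ because any $g$ in the kernel satisfies $g(x)\in\{x,\sigma(x)\}$ for every $x$ and, $X$ being irreducible, must equal $\mathrm{id}_X$ or $\sigma$, the latter being antisymplectic.

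Next I would bound $d$ from below. Since $X$ is a K3-surface, the double cover $\pi\colon X\to Y$ is branched along a smooth curve $B\in|-2K_Y|$ (so that $\omega_X\cong\pi^*(\omega_Y\otimes\mathcal{O}_Y(-K_Y))$ is trivial), and adjunction on $Y$ gives $2g(B)-2=B\cdot(B+K_Y)=(-2K_Y)\cdot(-K_Y)=2K_Y^2$, i.e.\ $g(B)=d+1$. From $g(B)\geq 3$ we get $d\geq 2$, so $Y$ is not of degree $1$.

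It then remains to rule out the degrees $3\leq d\leq 8$. Recall that in degree $8$ a $G$-minimal Del Pezzo surface must be $\mathbb{P}_1\times\mathbb{P}_1$, since the one-point blow-up of $\mathbb{P}_2$ has an $\mathrm{Aut}$-invariant $(-1)$-curve and hence is never $G$-minimal. I would then proceed case by case, using the descriptions of the automorphism groups of Del Pezzo surfaces (cf.\ \cite{dolgachev}):
\begin{itemize}
\item $d=8$: here $\mathrm{Aut}(\mathbb{P}_1\times\mathbb{P}_1)^{\circ}=\mathrm{PGL}_2(\mathbb{C})\times\mathrm{PGL}_2(\mathbb{C})$, and the map to the component group $C_2$ swapping the rulings is trivial by simplicity; projecting the resulting embedding $L_2(7)\hookrightarrow\mathrm{PGL}_2(\mathbb{C})^2$ to each factor lands in a finite subgroup of $\mathrm{PGL}_2(\mathbb{C})$, i.e.\ a cyclic, dihedral, tetrahedral, octahedral or icosahedral group; simplicity forces both projections to be trivial, so $L_2(7)$ would act trivially --- impossible.
\item $d=6$: the action on the hexagon of $(-1)$-curves defines a homomorphism $L_2(7)\to S_3\times C_2$, trivial by simplicity, so $L_2(7)$ would lie in the torus $(\mathbb{C}^*)^2$ and be abelian --- impossible.
\item $d=5$: $\mathrm{Aut}(Y)\cong S_5$ has order $120<168$ --- impossible.
\item $d=4$: $\mathrm{Aut}(Y)$ embeds into the Weyl group $W(D_5)$ of order $2^4\cdot 5!=1920=2^7\cdot 3\cdot 5$, which has no element of order $7$ --- impossible.
\item $d=3$: $\mathrm{Aut}(Y)$ embeds into $W(E_6)$ of order $51840=2^7\cdot 3^4\cdot 5$, again with no element of order $7$ --- impossible.
\end{itemize}
The remaining possibilities are $d=9$, i.e.\ $Y\cong\mathbb{P}_2$, and $d=2$, i.e.\ $Y$ a Del Pezzo surface of degree $2$, which is the assertion.

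The argument is essentially bookkeeping rather than deep: the genus formula alone disposes of degree $1$, and for degrees $3$ through $8$ everything reduces to the known structure of $\mathrm{Aut}$ of Del Pezzo surfaces together with the three elementary features of $L_2(7)$ (simplicity, non-abelianness, an order-$7$ element). The only points needing care are to pass to $\mathbb{P}_1\times\mathbb{P}_1$ in degree $8$ by first discarding the Hirzebruch surface, and to apply the quoted automorphism groups in the correct normalization; no information about the branch curve beyond $g(B)\geq 3$ is used.
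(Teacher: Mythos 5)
Your argument is sound in every case it treats, but the case analysis has one hole: the degree $d=7$ Del Pezzo surface (the blow-up of $\mathbb P_2$ in two points) never appears in your list. You announce that it remains to rule out $3\leq d\leq 8$, dispose of the non-minimality of the one-point blow-up of $\mathbb P_2$ inside the $d=8$ discussion, and then itemize $d=8,6,5,4,3$ — so $d=7$ is simply skipped. It is easy to close: the two-point blow-up of $\mathbb P_2$ carries a distinguished $(-1)$-curve, the proper transform of the line through the two blown-up points (the unique $(-1)$-curve meeting the other two), which is invariant under the full automorphism group; contracting it is equivariant, so this surface is never $G$-minimal — exactly the reason the paper dismisses $d=7$ and $8$ together. (Alternatively, its automorphism group is solvable, so it admits no faithful $L_2(7)$-action.) With that case added, your conclusion $d\in\{2,9\}$ is complete.

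Apart from this omission, your route differs from the paper's in several places and is arguably leaner. You kill $d=1$ by the genus formula $g(B)=d+1\geq 3$, whereas the paper uses the base point of $|-K_Y|$ and the absence of a two-dimensional representation of $L_2(7)$; both are fine. For $d=3,4,5,6$ the paper argues via the lengths of $G$-orbits of $(-1)$-curves (maximal subgroups of $L_2(7)$ have index $7$ and $8$, so orbits have length at least $7$), and for $d=4$ it needs an extra step: the stabilizer of a $(-1)$-curve is $C_3\ltimes C_7$, blowing it down produces a fixed point, and the lack of a two-dimensional representation forces $C_7$ to act trivially. Your replacements — $|S_5|=120<168$ for $d=5$, the homomorphism to $S_3\times C_2$ plus non-abelianness for $d=6$, and the divisibility argument $7\nmid|W(D_5)|$, $7\nmid|W(E_6)|$ for $d=4,3$ (using the faithfulness of the action of $\mathrm{Aut}(Y)$ on $\mathrm{Pic}(Y)$ for $d\leq 5$, cf.\ \cite{dolgachev}) — are correct and avoid the paper's more delicate $d=4$ argument, at the cost of invoking the Weyl-group embedding, which you should cite in the correct normalization as you note.
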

\begin{proof}
We may apply Theorem \ref{roughclassi} to conclude that $Y$ is a $G$-minimal Del Pezzo surface and denote 
by $d = \mathrm{deg}(Y) = K_Y^2$ its degree.

If $d=1$, then the anticanonical system $|-K_Y|$ is known to have precisely one base point. In particular, this point is fixed with respect to the full automorphism group.
Since $G$ does not admit a two-dimensional representation, this yields a contradiction.
As there is no injective homomorphism $G \hookrightarrow \mathrm{Aut}(\mathbb P_1 \times \mathbb
P_1) = (\mathrm{PSL}_2(\mathbb C) \times \mathrm{PSL}_2(\mathbb C)) \rtimes C_2$ and since the blow-up of $\mathbb P_2$ in one or two points admits an equivariant contraction map to either $\mathbb P_2$ or $\mathbb P_1 \times \mathbb P_1$ and is therefore never $G$-minimal, it follows that $d \neq 7,8$.  

For the remaining Del Pezzo surfaces we consider the action of $G$ on their configurations of (-1)-curves. 
Noting that the maximal
subgroups of $G$ are $C_3 \ltimes C_7$ and $S_4$ of index 8 and 7, respectively, we find that the $G$-orbit of a (-1)-curve $E
\subset Y$ consists of 7, 8, 14, 21, 24, 28 or more curves. Since the number of (-1)-curves on a Del Pezzo surface $Y$ of degree 3, 5, 6  equals  27, 10, 6, respectively, it follows that $d \not \in \{3,5,6\}$ .
If $d=4$, then the
union of (-1)-curves on $Y$ consists of two $G$-orbits of length 8. In
particular, $\mathrm{Stab}_G(E)\cong C_3 \ltimes C_7$ for any
(-1)-curve $E \subset Y$. 
Blowing down $E$ to a point $p \in Y'$ induces an action of $C_3 \ltimes C_7$ on $Y'$ fixing $p$. Since $C_3 \ltimes C_7$ does not admit a two-dimensional representation, it follows that the normal subgroup $C_7$ acts trivially on $Y'$ and therefore on $Y$. This is a contradiction and completes the proof of the lemma.
\end{proof}
It remains to classify the branch locus $B$ in both cases. 
%
%\subsection*{Double covers of $\mathbb P_2$}
\subsection*{Double covers of \texorpdfstring{$\mathbb P_2$}{P2}}
The action of $L_2(7)$ on $\mathbb P_2$ is necessarily given by a three-dimensional represention.
This follows from the fact that the group $L_2(7)$ does not admit nontrivial degree three central extensions. This can be derived from the cohomology group $H^2(L_2(7), \mathbb C^*) \cong C_2$ known as the Schur Multiplier. 
There are two isomorphism classes of three-dimensional representations and these differ by an outer automorphism. We may therefore consider one particular representation and check explicitly that in appropriately chosen coordinates the curve $B = \mathrm{Hess}(C_\mathrm{Klein}) = \{x_1^5x_2+x_3^5x_1+x_2^5x_3-5x_1^2 x_2^2 x_3^2 =0\}$ is $L_2(7)$-invariant. As the notation indicates this sextic is the Hessian curve associated to Klein's quartic curve $C_\mathrm{Klein}= \{ x_1^3x_2+x_2^3x_3+x_3^3x_1 =0\}$

In order to show that $B$ is the unique $G$-invariant sextic in $\mathbb P_2$ we consider the action of $G$ on $B$.
The maximal possible isotropy group is $C_7$ and each $G$-orbit in  $B$ consists of at least 21 elements. If there was another $G$-invariant smooth sextic curve $C \subset \mathbb P_2$, then the invariant set $ B \cap C$ would consist of at most 36 points. This is a contradiction.

Let $X$ be the K3-surface obtained as a double cover of $\mathbb P_2$ branched along $\mathrm{Hess}(C_\mathrm{Klein})$. 
It remains to check that $G$ lifts to a subgroup of $\mathrm{Aut}(X)$: On $X$ we find an action of a central degree two extension $E$ of $L_2(7)$. Since $L_2(7)$ is simple, it follows that $E_\mathrm{symp} \vartriangleleft E$ is mapped onto $L_2(7)$ and $E_\mathrm{symp} \cong L_2(7)$. In particular, the group $E$ splits as $E_\mathrm{symp} \times C_2$ where $C_2$ is generated by the antisymplectic covering involution. 
We have shown:
\begin{proposition}
 Let $X$ be a K3-surface with a symplectic action of the group $L_2(7)$ centralized by an antisymplectic involution $\sigma$. If $Y=X/\sigma \cong \mathbb P_2$ then $X$ is equivariantly isomorphic to the
double cover of $\mathbb P_2$ branched along $\mathrm{Hess}(C_\mathrm{Klein})$.
\end{proposition}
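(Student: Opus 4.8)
The plan is to combine Theorem~\ref{roughclassi} with an explicit study of the branch locus of $\pi\colon X\to Y\cong\mathbb P_2$ and of the lifting of $G$ to the double cover. Theorem~\ref{roughclassi} already gives that $\pi$ is branched along a single smooth connected curve $B\subset\mathbb P_2$ of genus $\ge 3$; since $X$ is a K3-surface, the standard formula $\omega_X=\pi^*(\omega_{\mathbb P_2}\otimes L)$ for a double cover with $B\in|L^{\otimes 2}|$ forces $\deg B=6$, so $B$ is a smooth sextic. As $B$ is $G$-invariant, the proposition reduces to pinning down the $G$-invariant sextics of $\mathbb P_2$ for the given $L_2(7)$-action. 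First I would recall, as in the discussion above, that this action is the projectivization of a genuine $3$-dimensional linear representation $V$ (using $H^2(L_2(7),\mathbb C^*)\cong C_2$ and that the Schur cover $\mathrm{SL}_2(\mathbb F_7)$ has no faithful $3$-dimensional representation), that there are exactly two such $V$, interchanged by the outer automorphism of $G$, and that by Definition~\ref{equivariantequivalence} (twisting by $\psi\in\mathrm{Aut}(G)$) I may fix one of them and work in Klein's coordinates.

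The heart of the argument is to show that, up to scalar, the only $G$-invariant sextic is $\mathrm{Hess}(C_\mathrm{Klein})=\{x_1^5x_2+x_3^5x_1+x_2^5x_3-5x_1^2x_2^2x_3^2=0\}$. Its $G$-invariance follows from the classical covariance of the Hessian together with the $G$-invariance of Klein's quartic $C_\mathrm{Klein}=\{x_1^3x_2+x_2^3x_3+x_3^3x_1=0\}$. For uniqueness I would argue geometrically. The group $G$ acts faithfully on $B$: a symplectic automorphism fixing the curve $B=\mathrm{Fix}_X(\sigma)$ pointwise would have a non-isolated fixed-point set, which symplectic automorphisms of a K3-surface do not possess (Remark~\ref{order of symp aut}). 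Hence the stabilizer of a point of the smooth curve $B$ embeds into $\mathbb C^*$ via its action on the tangent line and is cyclic of order in $\{1,2,3,4,7\}$, so every $G$-orbit in $B$ has length in $\{24,42,56,84,168\}$, of which only $24$ is $\le 36$. If $B'\neq\mathrm{Hess}(C_\mathrm{Klein})$ were a second $G$-invariant sextic, then $B\cap B'$ would be a non-empty (degree $36$ by B\'ezout) $G$-invariant subset of $B$, hence a single $G$-orbit of $24$ points along which the intersection multiplicity is constant, forcing $24\mid 36$ --- a contradiction. (Alternatively one computes $\dim(\mathrm{Sym}^6 V^*)^G=1$ from the character table.) Thus $B=\mathrm{Hess}(C_\mathrm{Klein})$, and in particular this sextic is smooth.

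It remains to realize the double cover and to make the identification equivariant. The double cover $X_B\to\mathbb P_2$ branched along the smooth sextic $B=\mathrm{Hess}(C_\mathrm{Klein})$ is a K3-surface, and every $g\in G\subset\mathrm{PGL}_3$ preserves $\mathcal O(3)$ and $B$, hence lifts to $X_B$; the lifts form a central extension $1\to C_2\to E\to G\to 1$ with $C_2=\langle\sigma\rangle$ generated by the antisymplectic covering involution. Since $\sigma$ is antisymplectic, $\langle\sigma\rangle\cap E_\mathrm{symp}=\{1\}$, and as $E/E_\mathrm{symp}$ is cyclic while $G$ is simple and non-cyclic, the image of $E_\mathrm{symp}$ in $G=E/\langle\sigma\rangle$ must be all of $G$; comparing orders gives $E=E_\mathrm{symp}\times\langle\sigma\rangle\cong L_2(7)\times C_2$, so $G$ acts symplectically on $X_B$ and is centralized by $\sigma$. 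Conversely, for the given triple $(X,G,\sigma)$ with $Y\cong\mathbb P_2$, after identifying $Y$ with $\mathbb P_2$ and conjugating the $G$-action to $V$ (possibly after composing with an outer automorphism, which is permitted) the branch curve is $\mathrm{Hess}(C_\mathrm{Klein})$; since a double cover is determined by its branch locus and $\sigma$ is necessarily the covering involution, $X$ is equivariantly isomorphic to $X_B$.

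I expect the real obstacle to be the uniqueness of the $G$-invariant sextic: keeping the orbit-length bookkeeping honest (or, equivalently, carrying out the $\mathrm{Sym}^6V^*$ character computation) and checking that the outer-automorphism ambiguity in the choice of representation is genuinely harmless for the final equivariant statement. The remaining points --- that $B$ is a smooth sextic and that $G$ lifts to a symplectic action centralized by $\sigma$ --- are essentially forced once Theorem~\ref{roughclassi} is available.
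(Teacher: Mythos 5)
Your proposal is correct and follows essentially the same route as the paper: Theorem \ref{roughclassi} gives a smooth invariant sextic branch curve, the linear (rather than merely projective) nature of the $L_2(7)$-action follows from the Schur multiplier, uniqueness of the invariant sextic is proved by the same B\'ezout-plus-orbit-length argument (your bookkeeping with orbit lengths $24,42,\dots$ and the divisibility contradiction just makes the paper's terse ``at most $36$ points'' step explicit), and the symplectic lifting of $G$ is obtained from the splitting of the central $C_2$-extension using simplicity of $L_2(7)$. No gaps; the extra care about the outer-automorphism ambiguity and the converse identification is consistent with the paper's notion of equivariant equivalence.
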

\subsection*{Double covers of Del Pezzo surfaces of degree two}
Here we assume that the degree of $Y$ equals two and consider the anticanonical map $Y \to \mathbb P_2$. It has degree two, is branched along a smooth curve of degree four, and equivariant with respect to $\mathrm{Aut}(Y)$. 
We obtain an action of $G$ on $\mathbb P_2$ stabilizing a smooth quartic. Choosing one particular 3-dimensional representation of $G$ one checks by direct computation that the smooth curve $C_\mathrm{Klein}= \{ x_1^3x_2+x_2^3x_3+x_3^3x_1 =0\}$ is invariant. It follows from classical invariant theory or from the intersection number argument applied above that $C_\mathrm{Klein}$ is the unique quartic curve in $\mathbb P_2$ with this property.
The curve $C_\mathrm{Klein}$ is a prominent example since its automorphism group $\mathrm{Aut}(C_\mathrm{Klein}) = G$ attains the maximal possible order $168 = 84(g(C_\mathrm{Klein}) -1)$ allowed by the Hurwitz formula.

Consider the branch curve $B \subset Y$ of the covering $\pi: X \to Y$ and the preimage $C \subset Y$ of $C_\mathrm{Klein}$. By construction, both curves belong to the linear system $|-2K_Y|$. If they do not coincide, they meet in $(-2K_Y)^2 = 8$ or less points. This contradicts the fact that the set $B \cap C$ must be $G$-invariant and therefore consists of at least 21 points in a $G$-orbit.

It follows that the surface $X$ is a cyclic degree four cover of $\mathbb P_2$ branched along $C_\mathrm{Klein}$ and can equivalently be described as the quartic hypersurface $\{x_1^3x_2+x_2^3x_3+x_3^3x_1+x_4^4 =0\} \subset \mathbb P_3$. On $X$ there is action of the group $\tilde G = G \times C_4$ where  $\tilde G_\mathrm{symp} = G$. We have shown:
\begin{proposition}
 Let $X$ be a K3-surface with a symplectic action of the group $L_2(7)$ centralized by an antisymplectic involution $\sigma$. If $Y=X/\sigma$ is a Del Pezzo surface of degree two then $X$ is equivariantly isomorphic to the
cyclic degree 4 cover of $\mathbb P_2$ branched along $C_\mathrm{Klein}$, i.e., Mukai's $L_2(7)$ example $\{x_1^3x_2+x_2^3x_3+x_3^3x_1+x_4^4 =0\} \subset \mathbb P_3$.
\end{proposition}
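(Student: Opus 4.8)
The plan is to descend from $X$ all the way to $\mathbb P_2$ via the anticanonical morphism of $Y$, and to pin down every piece of data by combining the representation theory of $L_2(7)$ with intersection theory on the degree-two Del Pezzo surface $Y$. First I would use that such a $Y$ carries an $\mathrm{Aut}(Y)$-equivariant anticanonical double cover $\rho\colon Y\to\mathbb P_2$ branched along a smooth quartic curve; restricting the action gives $G=L_2(7)$ acting on $\mathbb P_2$ and preserving a smooth quartic $Q$, namely the branch curve of $\rho$. As in the case $Y\cong\mathbb P_2$ treated above, this action is linear: a faithful homomorphism $G\hookrightarrow\mathrm{PGL}_3(\mathbb C)$ lifts to $\mathrm{GL}_3(\mathbb C)$ because the Schur cover $\mathrm{SL}_2(\mathbb F_7)$ of $L_2(7)$ has no three-dimensional irreducible representation on which its center acts nontrivially, and $L_2(7)$ being perfect then forces a genuine three-dimensional linear representation, unique up to conjugacy and the outer automorphism that swaps the two dual candidates. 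Fixing one such representation, I would verify by direct computation that Klein's quartic $C_\mathrm{Klein}=\{x_1^3x_2+x_2^3x_3+x_3^3x_1=0\}$ is $G$-invariant; since $\rho$ is equivariant, this identifies $Q=C_\mathrm{Klein}$.

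Next I would run the orbit-size estimate. The group $L_2(7)$ has no effective action on $\mathbb P_1$, and its subgroups that can fix a point of a smooth surface (hence embed into $\mathrm{GL}_2(\mathbb C)$ by linearization) have order at most eight, the maximum being a dihedral group $D_8$. Therefore every $G$-orbit in $\mathbb P_2$, and likewise in $Y$, has at least $168/8=21$ points. This yields two conclusions. First, $C_\mathrm{Klein}$ is the \emph{unique} $G$-invariant quartic in $\mathbb P_2$: two distinct invariant quartics would meet in a $G$-invariant set of at most sixteen points. Second, the branch curve $B$ of $\pi\colon X\to Y$ is the reduced preimage $C:=\rho^{-1}(C_\mathrm{Klein})$. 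Indeed $B$ is linearly equivalent to $-2K_Y$ because $X$ is a K3 surface and $\pi$ a double cover, while $\rho^*C_\mathrm{Klein}=-4K_Y=2C$ shows that $C$ is the ramification curve of $\rho$ and also lies in $|-2K_Y|$; both curves are $G$-invariant, so $B\neq C$ would force the $G$-invariant set $B\cap C$ to consist of $(-2K_Y)^2=8<21$ points.

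With $B=C$ established, the composite $X\xrightarrow{\pi}Y\xrightarrow{\rho}\mathbb P_2$ is recognized as a cyclic degree-four cover of $\mathbb P_2$ branched along $C_\mathrm{Klein}$: in a transverse coordinate $t$ on $\mathbb P_2$ near $C_\mathrm{Klein}$ one has $\rho\colon t=u^2$ and $\pi\colon u=v^2$, since $X\to Y$ is branched exactly along the ramification curve of $\rho$, hence $t=v^4$. Such a cover is the quartic hypersurface $\{x_1^3x_2+x_2^3x_3+x_3^3x_1+x_4^4=0\}\subset\mathbb P_3$, on which the deck group of $X\to\mathbb P_2$ is the cyclic $C_4$ generated by $x_4\mapsto i\,x_4$ and the covering involution of $X\to Y$ is $\sigma\colon x_4\mapsto-x_4$, which reverses the residue two-form and is therefore antisymplectic. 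To upgrade this to an equivariant isomorphism I would note that the group acting on $X$ is a central degree-two extension $\tilde G$ of $G$ by $\langle\sigma\rangle$; since $L_2(7)$ is simple, $\tilde G_\mathrm{symp}$ maps onto $L_2(7)$ and is hence isomorphic to it, so $\tilde G=L_2(7)\times\langle\sigma\rangle$ with the $L_2(7)$-factor acting symplectically and $\sigma$ the covering involution above. The uniqueness of the three-dimensional representation up to the outer automorphism then turns the abstract isomorphism into an equivariant one in the sense of Definition \ref{equivariantequivalence}.

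The main obstacle, I expect, is the identification $B=C$ in the second paragraph — that is, ruling out that $X\to\mathbb P_2$ could be a non-cyclic $C_2\times C_2$-cover, equivalently that $B$ is some other $G$-invariant member of $|-2K_Y|$. This is exactly what is forced by the uniqueness of the invariant quartic $C_\mathrm{Klein}$ combined with the numerical clash between the intersection number $8$ and the minimal $G$-orbit size $21$. The only other slightly delicate input is the linearity of the $G$-action on the anticanonical $\mathbb P_2$, resting on the Schur-multiplier computation; the remaining steps are direct calculations with invariants and with the residue form.
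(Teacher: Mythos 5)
Your proposal is correct and follows essentially the same route as the paper: pass to the equivariant anticanonical double cover $Y\to\mathbb P_2$, use the linearity and uniqueness (up to outer automorphism) of the three-dimensional representation to identify the branch quartic with $C_\mathrm{Klein}$, force $B$ to coincide with the ramification curve of $Y\to\mathbb P_2$ by comparing the intersection number $(-2K_Y)^2=8$ with the minimal $G$-orbit length $21$, and conclude that $X$ is the cyclic degree-four cover, i.e.\ Mukai's quartic, with the symplectic lifting secured by the simplicity of $L_2(7)$. The only cosmetic differences are that you make the Schur-multiplier lifting and the local $t=v^4$ cyclicity argument explicit, which the paper leaves implicit.
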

Combining this with the previous proposition yields the classification result summarized in Proposition \ref{classiL2(7)}. 
\begin{remark}
Proposition \ref{classiL2(7)} yields an alternative proof of the Main Theorem in \cite{OZ168} classifying K3-surfaces with actions of finite groups containing $G = L_2(7)$ as a subgroup of index four (cf.\ Corollary 1.3 in \cite{FKH1} and Theorem 6.1 and Lemma 6.2 in \cite{kf})
\end{remark}
\begin{remark}
In \cite{FKH1} K3-surfaces with a symplectic action of $C_3 \ltimes C_7$ centralized by an antisymplectic involution are described as double cover of $\mathbb P_2$ branched along sextic curves in a 1-dimensional family $\mathcal M$. Up to equivalence, there is a unique singular curve $C_\mathrm{sing}$ in this family. The K3-surface obtained as the minimal desingularization of the corresponding double cover of $\mathbb P_2$ is, by contruction, a double cover of the blow-up $b : Y \to \mathbb P_2$ at the seven singular points of $C_\mathrm{sing}$. The surface $Y$ is a Del Pezzo surface of degree two, its anticanonical map $\varphi: Y \to \mathbb P_2$ is branched along the proper transform $B$ of $C_\mathrm{sing}$ in $Y$ and its image $\varphi(B)$ is isomorphic to Klein's quartic curve. It follows that the surface $\{x_1^3x_2+x_2^3x_3+x_3^3x_1+x_4^4 =0\} \subset \mathbb P_3$ can also be characterized by the degeneration of the family $\mathcal M$ (cf.\ Theorem 1 in \cite{FKH1} and Theorem 5.4 in \cite{kf}). The blow-up map $ Y \to \mathbb P_2$ corresponds to the equivariant Mori reduction of $Y$ with respect to the subgroup $C_3 \ltimes C_7$ of $G$.
\end{remark}
%
%
%\subsection{The group $A_6$}\label{A6Valentiner}
\subsection{The group \texorpdfstring{$A_6$}{A6}}\label{A6Valentiner}
Let $G \cong A_6$ be the alternating group of degree 6 acting on a K3-surface $X$. Since $A_6$ is a simple group this action effective and symplectic. Let $\sigma$ be an antisymplectic involution on $X$ centralizing $G$ and assume that $\mathrm{Fix}_X(\sigma) \neq \emptyset$.  By Theorem \ref{roughclassi}, the K3-surface $X$ is a double cover of a Del Pezzo surface $Y$ with an effective action of $A_6$. 
\begin{lemma}
The Del Pezzo surface $Y$ is isomorphic to $\mathbb P_2$ with a uniquely determined action of $A_6$ given by the nontrivial central extension $V=3 .\ A_6$ of degree three known as \emph{Valentiner's group}. 
\end{lemma}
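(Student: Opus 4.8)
The plan is to pin down the degree $d = K_Y^2$ of the Del Pezzo surface $Y$ by eliminating every value $1 \le d \le 8$, which forces $Y \cong \mathbb P_2$, and then to identify the $A_6$-action on $\mathbb P_2$ using the representation theory of the central extensions of $A_6$. Throughout I use that $A_6$ is simple, so every homomorphism from $A_6$ to an abelian group, to $\mathrm{PGL}_2(\mathbb C)$, or to $S_5$ is trivial, and that $A_6$ has no nontrivial complex representation of dimension $\le 4$. By Theorem \ref{roughclassi} we already know that $Y$ is a $G$-minimal Del Pezzo surface with an effective (hence, by simplicity, faithful) action of $G = A_6$.

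First I would eliminate the small and non-minimal degrees, largely mirroring the argument given above for $L_2(7)$. For $d = 1$ the unique base point of $|-K_Y|$ is $\mathrm{Aut}(Y)$-fixed, so $A_6$ acts linearly on the two-dimensional tangent space there, impossible since $A_6$ has no nontrivial representation of dimension $\le 4$. For $d = 2$ the anticanonical morphism exhibits $Y$ as a double cover of $\mathbb P_2$ branched over a smooth plane quartic $C$; the covering (Geiser) involution is central, so $A_6$ embeds into $\mathrm{Aut}(\mathbb P_2,C) = \mathrm{Aut}(C)$, contradicting $|\mathrm{Aut}(C)| \le 84(g(C)-1) = 168 < 360$. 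For $d = 7$, and for $d = 8$ with $Y = \mathbb F_1$, the surface carries an $\mathrm{Aut}(Y)$-invariant configuration of $(-1)$-curves admitting an equivariant contraction, so $Y$ is not $G$-minimal; for $Y = \mathbb P_1 \times \mathbb P_1$ one of the two rulings would give an embedding $A_6 \hookrightarrow \mathrm{PGL}_2(\mathbb C)$, which does not exist. For $d = 4$ the automorphism group is an extension of a subgroup of $S_5$ by $(C_2)^4$, for $d = 5$ it is $S_5$, and for $d = 6$ it is an extension of a finite group of order $\le 12$ by a two-torus; in each case $A_6$ meets the abelian normal part trivially and would embed into a group of order $< 360$, a contradiction. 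The remaining case $d = 3$ is the delicate one: here I would invoke Dolgachev's classification of the automorphism groups of smooth cubic surfaces (\cite{dolgachev}, Chapter 10), none of which has order divisible by $360$, or alternatively argue directly from the induced $A_6$-action on the $27$ lines together with the list of subgroups of $A_6$ (the proper subgroups have indices $6,6,10,15,\dots$, and no union of such orbits of size $>1$ can be compatible with the incidence geometry of the lines). This leaves only $d = 9$, i.e.\ $Y \cong \mathbb P_2$.

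It then remains to determine the action, which is given by a faithful homomorphism $A_6 \to \mathrm{PGL}_3(\mathbb C)$, i.e.\ a faithful three-dimensional projective representation of $A_6$. Such a representation lifts to a linear representation of a central extension of $A_6$, and since the Schur multiplier $H^2(A_6,\mathbb C^*)$ is cyclic of order $6$, the relevant covers are $A_6$, $2.A_6$, $3.A_6$ and $6.A_6$. Neither $A_6$ (smallest faithful degree $5$) nor $2.A_6$ (smallest faithful degree $4$) nor $6.A_6$ admits a faithful representation of dimension $3$; only $3.A_6$, Valentiner's group, does, and it carries exactly two such, interchanged by complex conjugation and by the outer automorphism of $A_6$. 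Since equivariant equivalence (Definition \ref{equivariantequivalence}) allows precomposition with an automorphism of $A_6$, these two yield equivariantly equivalent actions, so the $A_6$-action on $Y \cong \mathbb P_2$ is uniquely determined and is the classical Valentiner action.

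The main obstacle is the cubic surface case $d = 3$: unlike the other degrees, which fall to the short group-theoretic arguments above, it genuinely requires either Dolgachev's tables or a careful hands-on analysis of orbits on the $27$ lines. Everything else — the degree eliminations and the final identification of the Valentiner representation — is routine once one has the character degrees of the covers $n.A_6$ at hand.
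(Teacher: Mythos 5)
Your proposal reaches the correct conclusion and is essentially sound, but it takes a genuinely different route from the paper for the middle degrees. The paper disposes of degrees $3$--$7$ in one stroke via Remark \ref{stab of minus one curve}: since $Y$ is the quotient of a K3-surface by an antisymplectic involution, the stabilizer of any $(-1)$-curve is cyclic or dihedral, hence of order at most $12$, so every $A_6$-orbit of $(-1)$-curves has length at least $30$; this is incompatible with the counts $27,16,10,6,3$, and a short orbit count (orbit lengths at least $30$ dividing $360$) also rules out the $56$ curves in degree $2$. You instead bound automorphism groups degree by degree --- Hurwitz applied to the quartic branch curve for $d=2$, the known groups for $d=4,5,6$, and Dolgachev's tables for cubic surfaces --- which works, but it leans on external classification results exactly where the paper's argument is uniform and self-contained, and it never exploits the double-cover structure that makes the stabilizer bound available. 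Two caveats. First, your ``direct'' fallback for $d=3$ is not complete as stated: the subgroup indices of $A_6$ do admit numerical decompositions such as $27=6+6+15$, so orbit counting alone does not suffice and one genuinely needs the incidence geometry (or the stabilizer bound, or the citation to Dolgachev); since you offer Dolgachev's tables as the primary route, this is a blemish rather than a gap. Second, $3.A_6$ has four faithful three-dimensional irreducibles (two complex-conjugate pairs), not two; your conclusion nevertheless stands, because all resulting embeddings $A_6\hookrightarrow\mathrm{PGL}_3(\mathbb C)$ differ by conjugation together with an automorphism of $A_6$, which is exactly the uniqueness up to equivariant equivalence required --- the paper justifies this either by the classification of finite subgroups of $\mathrm{SL}_3(\mathbb C)$ (your route) or by its hands-on argument restricting to $A_5$ and comparing normalizers of a fixed $A_4$ inside $\mathrm{PSL}_3(\mathbb C)$.
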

\begin{proof}
We go through the list of Del Pezzo surfaces according to their degree.

If $Y$ has degree one, then $| -K_Y|$ has precisely one base point which would have to be an $A_6$-fixed point. This is contrary to the fact that $A_6$ has no faithful two-dimensional representation. 

We recall that the stabilizer of a (-1)-curve $E$ in $Y$ is either cyclic or dihedral (Remark \ref{stab of minus one curve}). In particular, its order is at most 12 and therefore its index in $A_6$ is at least 30. This argument excludes Del Pezzo surfaces $Y$ of degree $\mathrm{deg}(Y) \in \{3,4,5,6,7\}$ since the number of (-1)-curves on $Y$ equals  27, 16, 10, 6, 3, respectively. Furthermore, the configuration of 56 exceptional curves on a Del Pezzo surface of degree 2 can neither be a single $A_6$-orbit nor the union of orbits and therefore  $\mathrm{deg}(Y) \neq 2$.

As was noted before the blow-up of $\mathbb P_2$ in one point is never $G$-minimal, hence
it remains to exclude $Y \cong \mathbb P_1 \times \mathbb P_1$. Assume there is an action of $A_6$ on $\mathbb P_1 \times \mathbb P_1$. Since $A_6$ has no subgroups of index two, it follows that $A_6 < \mathrm{PSL}_2(\mathbb C) \times  \mathrm{PSL}_2(\mathbb C)$ and both canonical projections are $A_6$-equivariant. This yields a contradiction since $A_6$ admits neither an effective action on $\mathbb P_1$ nor nontrivial normal subgroups of ineffectivity.

It follows that $Y \cong \mathbb P_2$. The action of $A_6$ on $\mathbb P_2$ is given by linear representation of a degree three central extension of $A_6$. Since $A_6$ has no faithful three-dimensional representation, this extension is nontrivial and isomorphic the unique nontrivial degree three extension $V=3.A_6$ known as Valentiner's group. Up to equivariant equivalence, there is a unique action of $A_6$ on $\mathbb P_2$. This follows from the classification of finite subgroup of $\mathrm{SL}_3(\mathbb C)$ (cf.\ \cite{blichfeldtbook}, \cite{blichfeldt}, and \cite{YauYu}) and can also be derived as follows: We need to show that any two actions induced by projective representations $\rho_i$ of $A_6$ are equivalent.
Restricting $\rho_i$ to the subgroup $A_5$ we obtain linear representation of $A_5$ and after a change of coordinates $\rho_1(A_5) = \rho_2(A_5) \subset \mathrm{SL}_3(\mathbb C)$. We fix a subgroup $A_4$ in $A_5$ and consider its normalizer $N$ in $A_6$. The groups $N$ and $A_5$ generate the full group $A_6$ and it suffices to prove that $\rho_1(N) = \rho_2(N)$. This is shown by considering an explicit three-dimensional representation of $A_4 < A_5$ and the normalizer $\mathcal N$ of $A_4$ inside $\mathrm{PSL}_3(\mathbb C)$. The group $A_4$ has index two in $\mathcal N$ and therefore $\mathcal N = \rho_1(N)= \rho_2(N)$.. 
\end{proof}
The covering $X \to Y$ is branched along an invariant curve $B$ of degree six. This curve is defined by an invariant polynomial $F_{A_6}$ of degree six, which is unique by Molien's formula. Its explicit equation is derived in \cite{crass}. In appropriately chosen coordinates,
\[
F_{A_6}(x_1,x_2,x_3) = 10 x_1^3x_2^3+ 9 x_1^5x_3 + 9 x_2^3x_3^3-45 x_1^2 x_2^2 x_3^2-135 x_1 x_2 x_3^4 + 27 x_3^6.
\]
The action of $A_6$ on $\mathbb P_2$ induces an action of a central degree two extension of $E$ on the double cover branched along $\{F_{A_6}=0\}$. 
By the same arguments as in the case $G = L_2(7)$ we may conclude that $E$ splits as $E_\mathrm{symp} \times C_2 = A_6 \times C_2$ where $C_2$ is generated by the antisymplectic covering involution. 
This proves the existence of a unique K3-surface with $A_6 \times C_2$-symmetry to which we refer as the \emph{Valentiner surface}. We have shown:
\begin{proposition}
Let $X$ be a K3-surface with a symplectic action of $A_6$ and let $\sigma$ be an antisymplectic involution on $X$ centralizing $A_6$ with $\mathrm{Fix}_X(\sigma) \neq \emptyset$. Then $X$ is equivariantly isomorphic to the Valentiner surface.
\end{proposition}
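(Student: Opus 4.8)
The plan is to assemble the ingredients already in place. Since $|A_6| = 360 > 96$, Theorem \ref{roughclassi} applies: the quotient $Y = X/\sigma$ is a $G$-minimal Del Pezzo surface and $B := \mathrm{Fix}_X(\sigma)$ is a single smooth connected curve with $g(B) \ge 3$, so $\pi : X \to Y$ is a double cover branched along the smooth curve $B$. By the lemma above, $Y \cong \mathbb{P}_2$ carrying the Valentiner action of $A_6$, i.e.\ the projective action coming from the nontrivial central extension $V = 3.A_6 < \mathrm{SL}_3(\mathbb{C})$, and this action is unique up to equivariant equivalence.

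Next I would identify the branch curve. Since $B \in |-2K_{\mathbb{P}_2}| = |\mathcal{O}_{\mathbb{P}_2}(6)|$ and is $A_6$-invariant, it is the zero locus of a $V$-invariant sextic form; Molien's formula shows that the space of such forms is one-dimensional, spanned by the explicit polynomial $F_{A_6}$ recalled above (worked out in \cite{crass}). Hence $B = \{F_{A_6} = 0\}$, and this sextic is smooth of genus $10$ --- smoothness being automatic here by Theorem \ref{roughclassi}. Thus $(X,\sigma)$ is the double cover of $\mathbb{P}_2$ branched along $\{F_{A_6} = 0\}$ with $\sigma$ the covering involution.

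It remains to see that the $A_6$-action on $\mathbb{P}_2$ lifts, and lifts uniquely, to a compatible symplectic action on $X$, so that $(X, A_6)$ is the Valentiner surface. The preimage of $A_6 < \mathrm{Aut}(\mathbb{P}_2)$ in $\mathrm{Aut}(X)$ is a central extension $E$ of $A_6$ by $\langle \sigma \rangle \cong C_2$; as $A_6$ is simple, $E_\mathrm{symp} \lhd E$ surjects onto $A_6$, which (together with $\sigma \notin E_\mathrm{symp}$) forces $E_\mathrm{symp} \cong A_6$ and $E = E_\mathrm{symp} \times \langle \sigma \rangle$, exactly as in the $L_2(7)$ case. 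The symplectic lift of each $g \in A_6$ is then unique, so the pair $(X, A_6)$ is determined up to equivariant isomorphism, the choice of Valentiner representation being absorbed into the automorphism $\psi \in \mathrm{Aut}(A_6)$ permitted by Definition \ref{equivariantequivalence}. This is the Valentiner surface.

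Formally the proposition is therefore a corollary of the lemma and the invariant-theoretic remarks. The genuine difficulty lies in those earlier steps --- ruling out every Del Pezzo model other than $\mathbb{P}_2$ via the orbit lengths of $(-1)$-curves and the absence of faithful $2$- and $3$-dimensional representations of $A_6$, and establishing uniqueness of the Valentiner action --- which we are entitled to invoke here; the only point in the present argument requiring care is the splitting $E = A_6 \times C_2$, and that follows immediately from simplicity of $A_6$.
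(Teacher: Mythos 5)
Your argument is correct and follows the paper's own route: apply Theorem \ref{roughclassi}, invoke the preceding lemma to get $Y \cong \mathbb{P}_2$ with the unique Valentiner action, identify $B$ as the unique $V$-invariant sextic via Molien's formula, and split the degree-two central extension using simplicity of $A_6$ exactly as in the $L_2(7)$ case. Nothing essential is missing.
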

%
%
%\subsection{The group $S_5$}
\subsection{The group \texorpdfstring{$S_5$}{S5}}
Let $X$ be a K3-surface with a symplectic action of $G = S_5$ and let $\sigma$ denote an antisymplectic involution centralizing $G$. We assume that $\mathrm{Fix}_X(\sigma) \neq \emptyset$. 
We may apply Theorem \ref{roughclassi} yielding that $X/ \sigma =Y$ is a $G$-minimal Del Pezzo surface and $\pi: X \to Y$ is branched along a smooth connected curve $B$ of genus $g(B) = 13-e(Y)$.
We show that only very few Del Pezzo surfaces admit an effective action of $S_5$ or a smooth $S_5$-invariant curve of appropriate genus.
\begin{lemma}
The degree $\mathrm{deg}(Y)=d$ of the Del Pezzo surface $Y$ is either three or five. 
\end{lemma}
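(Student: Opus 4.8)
The plan is to combine the $G$-minimality of $Y$ with the Euler characteristic formula \eqref{eulerchar} and run a case analysis over the degree $d=\deg Y$. By Theorem \ref{roughclassi} the surface $Y$ is a $G$-minimal Del Pezzo surface and $\pi\colon X\to Y$ is branched along a single smooth connected curve $B$ with $g(B)=13-e(Y)=1+d$, so it suffices to eliminate every value of $d\in\{1,\dots,9\}$ except $3$ and $5$. The small degrees are handled by combining standard facts about automorphism groups of Del Pezzo surfaces with the representation theory of $S_5$ and the orbit bound for $(-1)$-curves (Remark \ref{stab of minus one curve}).

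In detail: for $d=1$ the unique base point of $|-K_Y|$ is $\mathrm{Aut}(Y)$-fixed, so linearising would give $S_5\hookrightarrow\mathrm{GL}_2(\mathbb C)$, which is impossible. For $d=9$ one would need $S_5\hookrightarrow\mathrm{PGL}_3(\mathbb C)$; as the Schur multiplier of $S_5$ has no $3$-torsion and $S_5$ has no faithful three-dimensional linear representation, this does not occur. Degree $7$ and the Hirzebruch surface $\mathbb F_1$ are never $G$-minimal, carrying a distinguished invariant $(-1)$-curve. For $d=6$ the group $\mathrm{Aut}(Y)$ is a torus extended by a finite group of order dividing $12$, and $S_5$, having no nontrivial abelian normal subgroup, cannot embed into it. For $d=4$ the sixteen lines of $Y$ would have to split into $S_5$-orbits of length at least $10$, which cannot sum to $16$. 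For $d=2$ the Geiser involution exhibits $\mathrm{Aut}(Y)$ as a central $C_2$-extension of $\mathrm{Aut}(\Gamma)\subset\mathrm{PGL}_3(\mathbb C)$, where $\Gamma$ is the smooth branch quartic of the anticanonical map $Y\to\mathbb P_2$; since $Z(S_5)=1$ and $S_5\not\hookrightarrow\mathrm{PGL}_3(\mathbb C)$, no embedding $S_5\hookrightarrow\mathrm{Aut}(Y)$ exists. (Note that, unlike for $A_6$, a bare count of the $56$ lines of a degree-two Del Pezzo surface is not conclusive here, so one must use this finer structure.)

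The substantial case is $d=8$, i.e. $Y\cong\mathbb P_1\times\mathbb P_1$. Here I would first recall that, up to conjugacy, $S_5$ acts on $\mathbb P_1\times\mathbb P_1$ in essentially one way, namely as the automorphism group of the smooth invariant quadric $Q=\{\sum_{i=1}^5 x_i^2=0\}$ inside $\mathbb P(V)$, where $V=\{\sum x_i=0\}$ is the standard representation (no other four-dimensional projective representation of $S_5$ preserves a quadratic form; this is part of the classification of automorphism groups of Del Pezzo surfaces). A dimension count via Molien's formula then shows that $|-2K_Y|=|\mathcal O(4,4)|$ contains a single $S_5$-(semi-)invariant member $B=Q\cap\{\sum x_i^4=0\}$, and the Jacobian criterion shows that $B$ is smooth. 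The contradiction comes from a fixed-point count on the K3 double cover $X$: since the coordinate permutation action twists $\omega_X$ by the sign character, the symplectic lift to $X$ of a $4$-cycle $\rho\in S_5$ is the composition of the coordinate permutation with the covering involution; this is a symplectic automorphism of order $4$ whose fixed locus is exactly the set of $\rho$-fixed points of $Q$ lying on $B$. But a ruling-swapping order-$4$ automorphism of $\mathbb P_1\times\mathbb P_1$ has precisely two fixed points, and one computes that neither lies on $B$, so the lift is fixed-point-free — contradicting the fact (Remark \ref{order of symp aut}) that a symplectic automorphism of order $4$ of a K3-surface has four fixed points. Hence $d\ne 8$, and only $d\in\{3,5\}$ remains.

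I expect the $d=8$ step to be the main obstacle: it requires pinning down the essentially unique $S_5$-action on $\mathbb P_1\times\mathbb P_1$ and its unique invariant curve in $|-2K_Y|$ explicitly, and the concluding contradiction is delicate, resting on the interplay between the sign-character twist of the holomorphic two-form and Nikulin's count of fixed points of finite-order symplectic automorphisms. The treatment of $d=2$ is a secondary point of care, since there the crude orbit-counting that works in the $A_6$ case must be replaced by the structure of $\mathrm{Aut}$ of a degree-two Del Pezzo surface through its branch quartic.
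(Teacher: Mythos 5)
Your treatment of the routine degrees ($d=1,2,4,6,7,9$ and the non-minimal cases) is fine and essentially the paper's argument (orbit bounds for $(-1)$-curves, no faithful low-dimensional representations of $S_5$, the Geiser/anticanonical reduction for $d=2$). The decisive case, as you anticipate, is $d=8$, and there your argument has a genuine gap at the concluding step. Your set-up is consistent: the action is the one on the quadric $Q=\{\sum x_i^2=0\}\subset\mathbb P(V)$, the unique invariant member of $|-2K_Q|$ is $B=Q\cap\{\sum x_i^4=0\}$, a $4$-cycle $\rho$ swaps the rulings, has exactly the two fixed points $[1:\pm i:-1:\mp i:0]$ on $Q$, and neither lies on $B$. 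But your identification of the symplectic lift of $\rho$ and of its fixed locus is wrong. At a fixed point of a ruling-swapping automorphism of order $4$, the differential has eigenvalues $\pm i$ (its square is an involution with eigenvalues $-1,-1$ there), so $\det d\rho=+1$. Since these two fixed points lie off $B$, the lift $\hat\rho$ of $\rho$ that fixes the two points of the fibre over one of them is an automorphism of order $4$ whose linearization at such a fixed point agrees with that of $\rho$, hence has determinant $+1$: this lift is the \emph{symplectic} one, and by the same computation at the other fixed point it fixes all four points of $X$ lying over $\mathrm{Fix}_Q(\rho)$. So the symplectic lift of $\rho$ has exactly four fixed points, in perfect agreement with Remark \ref{order of symp aut}; the lift you describe (composition with the covering involution) is the antisymplectic one. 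No contradiction is obtained, and $d=8$ is not excluded by your argument.

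You should also be aware that your intermediate claim is in direct tension with the paper's own exclusion of this case: the paper argues that all four fixed points of a $5$-cycle must lie on the branch curve and then that any $A_5$-invariant $(4,4)$-form through these points is necessarily reducible, whereas your curve $B=Q\cap\{\sum x_i^4=0\}$ is invariant, does pass through all four fixed points of the $5$-cycle (both $\sum\zeta^{2ij}$ and $\sum\zeta^{4ij}$ vanish), and a direct Jacobian check of $\{p_1=p_2=p_4=0\}\subset\mathbb P_4$ supports your smoothness claim. So at most one of the two arguments can be sound, and in either case the burden of your proof falls exactly on the step that fails: you would need either to refute the invariance/irreducibility of $B$ along the lines of the paper's monomial argument (paying close attention to which character of the cyclic group the canonically linearized action on $H^0(-2K)$ actually singles out), or to find a genuinely different obstruction to a symplectic $S_5$ over $\mathbb P_1\times\mathbb P_1$; the fixed-point count you propose, with the parity corrected, yields no obstruction at all.
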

\begin{proof}
We go through the list of $G$-minimal Del Pezzo surfaces and exclude all possible cases except $d \in \{3,5\}$.

Assume $Y \cong \mathbb P_2$, i.e., $G \hookrightarrow \mathrm{PSL}_3(\mathbb C)$, and let $\tilde G$ denote the preimage of $G$ in $\mathrm{SL}_3(\mathbb C)$. Since $A_5$ has no nontrivial central extension of degree three, it follows that the preimage of $A_5 < S_5$ in $\tilde G$ splits as $\tilde A_5 = A_5 \times C_3$. 
Let $g \in S_5$ be any transposition and pick $\tilde g$ in its preimage with $\tilde g^2 = \mathrm{id}$. Now $\tilde g$ and $ A_5$ generate a copy of $S_5$ in $\mathrm{SL}_3(\mathbb C)$. This is a contradiction since the irreducible representations of $S_5$ have dimensions $1,4,5$ or $6$ 

Next assume that $Y$ is isomorphic to $\mathbb P_1 \times \mathbb P_1$. We investigate the action of $S_5 = A_5 \rtimes C_2$ using the fact that $A_5$ is a simple group. Recalling $\mathrm{Aut}(Y) = (\mathrm{PSL}_2( \mathbb C) \times  \mathrm{PSL}_2(\mathbb C)) \rtimes C_2$ it follows that $A_5 < \mathrm{PSL}_2( \mathbb C) \times  \mathrm{PSL}_2( \mathbb C)$, i.e, the canonical projections onto the factors are $A_5$-equivariant.
If $A_5$ acts trivially on one of the factors, then it must act nontrivially on the second factor
and in particular the generator $\tau$ of the outer $C_2$ in $S_5$ stabilizes
the factors. It follows that $S_5$ 
acts effectively on the second factor. This is impossible since there is no effective action of $S_5$ on $\mathbb P_1$. We conclude that $A_5$ acts effectively on both factors and $\tau$ exchanges them.
We consider an element $\lambda$ of order five in $A_5$ and chose coordinates on $\mathbb P_1 \times \mathbb P_1$ such that $\lambda$ acts by
$([z_1:z_2],[w_1:w_2]) \mapsto ([\xi z_1:z_2],[\xi ^a w_1:w_2])$
for some $a \in \{1,2,3,4\}$ and $\xi^5 =1$. The automorphism $\lambda$ has four fixed points in $Y$.
Since it lifts to a symplectic automorphism on the K3-surface $X$ with four fixed points, all fixed points must lie on the branch curve.
The branch curve $B \subset Y$ is a smooth invariant curve linearly equivalent to $-2K_Y$ and is therefore given by an $S_5$-semi-invariant polynomial $f$ of bidegree $(4,4)$. Since $f$ must be invariant with respect to the commutator subgroup $S_5' = A_5$, it is a linear combination of $\lambda$-invariant monomials of bidegree $(4,4)$.
For each choice of $a$ one lists all $\lambda$-invariant monomials of bidegree $(4,4)$. In the case $a=1$ these are
$z_1 z_2^3 w_1^4,\,\, z_1^2 z_2^2 w_1^3 w_2 ,\,\, z_1^3 z_2 w_1^2 w_2^2,\,\,  z_1^4 w_1 w_2^3 , \,\,z_2^4 w_2^4$.
As $f$ must vanish at $p_1 \dots p_4$, one sees that it may not contain $z_2^4 w_2^4$. The remaining monomials have a common component $z_1 w_1$ such that $f$ factorizes and $B$ must be reducible, a contradiction.
The same argument can be carried out for each choice of $a$. It follows that the action of $S_5$ on  $\mathbb P_1 \times \mathbb P_1$ does not admit invariant irreducible curves of bidegree $(4,4)$. This eliminates the case $Y \cong \mathbb P_1 \times \mathbb P_1$.

Again using the fact that the largest subgroup of $S_5$ which can stabilize a (-1)-curve in $Y$ is the group $D_{12}$ of index 10,
it follows that the number of (-1)-curves in a $G$-orbit is at least 10
and Del Pezzo surfaces of degree four or six can be excluded by considering the action of $G$ on their set of (-1)-curves.

If $d =2$, then the anticanonical map defines an $\mathrm{Aut}(Y)$-equivariant double cover of $\mathbb P_2$. The induced action of $S_5$ on $\mathbb P_2$ would have to be effective and therefore we obtain a contradiction as in the case $Y \cong \mathbb P_2$.

If $d=1$ then the anticanonical system $|-K_Y|$ is known to have precisely one base point which has to be fixed point of the action of $S_5$. Since $S_5$ has no faithful two-dimensional representation, this is a contradiction. 
\end{proof}
\subsection*{Double covers of Del Pezzo surfaces of degree three}
The following example of a K3-surface $X$ with a symplectic action of $S_5$  can be found in Mukai's list \cite{mukai}. We identify an antisymplectic involution $\sigma$ on $X$ centralizing $S_5$ such that $X/\sigma$ is a Del Pezzo surface of degree three.
\begin{example}\label{MukaiS5}
Let $X$ be the K3-surface in $\mathbb P_5$ given by 
$\sum_{i=1}^5 x_i = \sum_{i=1}^6 x_1^2 = \sum_{i=1}^5 x_i^3=0$
 and let $S_5$ act on $\mathbb P_5$ by permuting the first five variables and by the character $\mathrm{sgn}$ on the last variable. This induces a symplectic action of $S_5$ action on $X$ (cf.\ \cite{mukai}).
The involution $\sigma : \mathbb P_5 \to \mathbb P_5$ given by $[x_1:x_2:x_3:x_4:x_5:x_6] \mapsto [x_1:x_2:x_3:x_4:x_5:-x_6]$ defines an antisymplectic involution on $X$ centralizing the action of $S_5$. 
The quotient $Y$ of $X$ by $\sigma$ is given by restricting then rational map $[x_1:x_2:x_3:x_4:x_5:x_6] \mapsto [x_1:x_2:x_3:x_4:x_5]$ to $X$. The surface $Y$ is given by 
$\{ \sum_{i=1}^5 y_i= \sum_{i=1}^5 y_i^3 =0\} \subset \mathbb P_4$
and is isomorphic to the Clebsch diagonal surface  
$\{z_1^2 z_2 + z_1 z_3^2 + z_3 z_4^2 + z_4 z_2^2 = 0\} \subset \mathbb P_3$ (cf.\ Theorem 10.3.10 in \cite{dolgachev}), a Del Pezzo surface of degree three. The branch set $B$ is given by $\{ \sum_{i=1}^5 y_1^2=0\} \cap Y \subset \mathbb P_4$.
By the following proposition, this example is the unique K3-surface with $S_5 \times \langle \sigma \rangle$-symmetry such that $X/\sigma$ is a Del Pezzo surface of degree three. 
\end{example}
\begin{proposition}\label{S5 on degree three}
Let $X$ be a K3-surface with a symplectic action of the group $S_5$ centralized by an antisymplectic involution $\sigma$. If $Y=X/\sigma$ is a Del Pezzo surface of degree three, then $X$ is equivariantly isomorphic to Mukai's $S_5$-example $\{\sum_{i=1}^5 x_i = \sum_{i=1}^6 x_1^2 = \sum_{i=1}^5 x_i^3=0\} \subset \mathbb P_5$. 
\end{proposition}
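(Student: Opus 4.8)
The plan is to follow the same blueprint used for $\mathbb{P}_2$ and for Del Pezzo surfaces of degree two in the $L_2(7)$-case, adapted to a cubic surface in $\mathbb{P}_3$. First I would identify the action of $S_5$ on $Y$. A $G$-minimal cubic Del Pezzo surface $Y \subset \mathbb{P}_3$ has automorphism group acting linearly via the anticanonical embedding, so an effective $S_5$-action on $Y$ comes from a four-dimensional (projective, hence after passing to a central extension, linear) representation. Since $S_5$ has no nontrivial central extension of degree coprime to $2$ relevant here, and since $\mathrm{Aut}$ of the Clebsch diagonal cubic is exactly $S_5$ (Theorem 10.3.10 in \cite{dolgachev}), I would invoke Dolgachev's classification of automorphism groups of cubic surfaces to conclude that, up to projective equivalence, $Y$ is the Clebsch diagonal cubic with its standard $S_5$-action (the one permuting the five coordinates on the hyperplane $\sum_{i=1}^5 x_i = 0$ in $\mathbb{P}_4$). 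Equivalently: the only cubic surface with a faithful $S_5$-action is the Clebsch cubic, and that action is unique up to conjugation and outer automorphism.

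Next I would pin down the branch curve. By Theorem \ref{roughclassi} the branch locus $B$ is a single smooth connected $S_5$-invariant curve of genus $g(B) = 13 - e(Y) = 13 - 6 = 7$, and $B \in |-2K_Y|$; under the anticanonical embedding of the cubic this means $B$ is cut out on $Y$ by a quadric hypersurface in $\mathbb{P}_3$. So I need the space of $S_5$-invariant (or sign-semi-invariant) quadrics on $Y$. Working in the $\mathbb{P}_4$ model $\{\sum y_i = \sum y_i^3 = 0\}$, the relevant quadrics are the $S_5$-representation of degree-two polynomials modulo the linear relation; the space of quadrics is spanned by $\sum y_i^2$ and $\sum_{i<j} y_i y_j$, which on the hyperplane $\sum y_i = 0$ are proportional. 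Hence there is essentially a unique $S_5$-invariant quadric section, namely $\{\sum_{i=1}^5 y_i^2 = 0\}$, and $B$ must be this curve. As in the earlier cases I would back this up with an intersection-number argument: two distinct smooth curves in $|-2K_Y|$ meet in $(-2K_Y)^2 = 8$ points, but any $S_5$-orbit on a smooth genus-$7$ curve has length $\geq 10$ (the largest subgroup of $S_5$ fixing a point on such a curve has order $\le 12$, and in fact orbits are longer), forcing the two curves to coincide.

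Finally I would reconstruct $X$ and check equivariance. The double cover of $Y$ branched along $B = \{\sum y_i^2 = 0\} \cap Y$ is precisely the surface $\{\sum_{i=1}^5 x_i = \sum_{i=1}^6 x_i^2 = \sum_{i=1}^5 x_i^3 = 0\} \subset \mathbb{P}_5$ of Example \ref{MukaiS5}: the extra coordinate $x_6$ satisfies $x_6^2 = -\sum_{i=1}^5 x_i^2$ (using the first two equations), realizing the double cover, and $\sigma$ is $x_6 \mapsto -x_6$. It remains to see that the $S_5$-action lifts to a \emph{symplectic} action: the covering induces a central degree-two extension $E$ of $S_5$ acting on $X$, and $E_\mathrm{symp} \lhd E$ maps onto $S_5$ (since the quotient $E/E_\mathrm{symp}$ is cyclic and $\langle\sigma\rangle$ is antisymplectic). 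Because $S_5$ has no nontrivial central extension by $C_2$ that would fail to split off the covering involution in the required way — or more simply, because the explicit action of $S_5$ on $\mathbb{P}_5$ permuting $x_1,\dots,x_5$ and acting by $\mathrm{sgn}$ on $x_6$ restricts to the symplectic action of Mukai's example — we get $E = S_5 \times C_2$. This gives the equivariant isomorphism with Mukai's $S_5$-example.

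The main obstacle I expect is the uniqueness of the $S_5$-action on the cubic surface: showing that \emph{every} $G$-minimal cubic Del Pezzo surface with a faithful $S_5$-action is equivariantly the Clebsch cubic. One must rule out the possibility of $S_5$ sitting inside $\mathrm{Aut}$ of some other cubic, and rule out two genuinely inequivalent embeddings into $\mathrm{Aut}(\text{Clebsch cubic})$; here I would lean on Dolgachev's tables (\cite{dolgachev}, Chapter 10) listing cubic surfaces by automorphism group, noting that $S_5$ occurs only for the Clebsch cubic and the outer automorphism of $S_5$ is realized by a projective transformation, so the action is unique up to the equivalence of Definition \ref{equivariantequivalence}. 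The rest — identifying the branch quadric and checking the symplectic lift — is routine given the techniques already deployed for $L_2(7)$ and $A_6$.
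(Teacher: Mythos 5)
Your proposal follows essentially the same route as the paper's proof: identify $Y$ as the Clebsch diagonal cubic with its unique $S_5$-action via Dolgachev's tables, realize $B\in|-2K_Y|$ as a quadric section (the paper justifies this with the count $h^0(Y,\mathcal O(-2K_Y))=10=h^0(\mathbb P_3,\mathcal O(2))$, and deduces semi-invariance of the quadric from irreducibility of $Y$), observe that the only (semi-)invariant quadric on $\{\sum y_i=0\}$ is $\sum y_i^2$, and recover Mukai's example together with the splitting of the covering extension. The only slip is numerical and harmless: $e(Y)=9$ for the cubic surface, so $g(B)=13-e(Y)=4$ (not $7$); your orbit-length backup argument still gives orbits of length well above $(-2K_Y)^2=8$, since point stabilizers on $B$ are cyclic of order at most $6$.
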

\begin{proof}
We consider the $\mathrm{Aut}(Y)$-equivariant embedding of the degree three Del Pezzo surface $Y$ into $\mathbb P_3$ given by the anticanonical map. Any automorphism of $Y$ is induced by an automorphism of the ambient projective space. 
It follows from the representation and invariant theory of the group $S_5$ that a Del Pezzo surface of degree three with an effective action of the group $S_5$ is equivariantly isomorphic the Clebsch cubic $\{z_1^2 z_2 + z_1 z_3^2 + z_3 z_4^2 + z_4 z_2^2 = 0\} \subset \mathbb P_3$ (cf.\ Theorems 10.3.9 and 10.3.10, Table 10.3 in \cite{dolgachev}). We show that the ramification curve $B \in |-2K_Y| $ is given by intersecting $Y$ with a quadric in $\mathbb P_3$. 

Applying the formula 
$h^0(Y, \mathcal{O}(-rK_Y))= 1+ \frac{1}{2}r(r+1) \cdot \mathrm{deg}(Y)$
(cf.\ e.g.\ Lemma 8.3.1 in \cite{dolgachev})
we obtain $h^0(Y,\mathcal{O}( -2K_Y))= 10$. This is also the dimension of the space of sections of $\mathcal O(2)$ in $\mathbb P_3$. It follows that the restriction map
$H^0(\mathbb P_3, \mathcal O (2)) \to H^0(Y, \mathcal O(-2K_Y))$
is surjective and $B = Y \cap Q$ for some quadric $Q = \{f=0\}$ in $\mathbb P_3$.

Since $B$ is an $S_5$-invariant curve in $Y$, it follows that for each $g \in S_5$ the intersection of $gQ = \{ f \circ g^{-1} =0\}$ and Y coincides with $B$. It follows that 
there exists a constant $c \in \mathbb C^*$ such that $(f \circ g^{-1}) - cf$ vanishes identically on $Y$. Since $Y$ is irreducible, this implies $f \circ g^{-1} = cf$. It follows that the polynomial $f$ is an $S_5$-semi-invariant and therefore invariant with respect to the commutator subgroup $A_5$.

We have previously noted that after a suitable linear change of coordinates the surface $Y$ is given by $\{ \sum_{i=1}^5 y_i= \sum_{i=1}^5 y_i^3 =0\} \subset \mathbb P_4$ where $S_5$ acts  by permutation. 
In these coordinates the semi-invariant polynomial $f$ is given by
$a \sum_{i=1}^5 y_i^2 + b(\sum_{i=1}^5 y_i)^2 =0$
for some $a,b \in \mathbb C$.
In particular,  $B = Y \cap \{f=0\} = Y \cap  \{\sum_{i=1}^5 y_i^2 =0\}$
and 
$X$ is Mukai's $S_5$-example discussed in Example \ref{MukaiS5}.
\end{proof}
\subsection*{Double covers of Del Pezzo surfaces of degree five}
A second class of candidates of K3-surfaces with $S_5 \times C_2$-symmetry is given by double covers of Del Pezzo surfaces of degree five. 
\begin{remark}
Any two Del Pezzo surfaces of degree five are isomorphic and the automorphisms group of a Del Pezzo surface $Y$ of degree five is $S_5$. The ten (-1)-curves on $Y$ form a graph known as the \emph{Petersen graph}. This graph has $S_5$-symmetry and every symmetry of the abstract graph is induced by a unique automorphism of the Del Pezzo surface. 
\end{remark}
\begin{proposition}\label{S5 on degree five}
Let $X$ be a K3-surface with a symplectic action of the group $S_5$ centralized by an antisymplectic involution $\sigma$. If $Y=X/\sigma$ is a Del Pezzo surface of degree five, then $X$ is equivariantly isomorphic to the minimal desingularization of the double cover of $\mathbb P_2$ branched along the sextic
\begin{align*}
 C_{S_5}=\{&2x_1^4x_2x_3+2x_1x_2^4x_3+2x_1x_2x_3^4-2x_1^4x_2^2-2x_1^4x_3^2-2x_1^2x_2^4-2x_1^2x_3^4\\
	   &-2x_2^4x_3^2-2x_2^2x_3^4+2x_1^3x_2^3+2x_1^3x_3^3+2x_2^3x_3^3+x_1^3x_2^2x_3+x_1^3x_2x_3^2\\
	   &+x_1^2x_2^3x_3+x_1^2x_2x_3^3+x_1x_2^3x_3^2+x_1x_2^2x_3^3-6x_1^2x_2^2x_3^2=0\}.
\end{align*}
\end{proposition}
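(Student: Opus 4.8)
The plan is to run the argument in close parallel to the degree-three case (Proposition~\ref{S5 on degree three}), using the uniqueness of the degree-five Del Pezzo surface $Y$ and the description of its automorphism group as $S_5$ acting on the Petersen graph of $(-1)$-curves. First I would recall that, by Theorem~\ref{roughclassi}, $\pi : X \to Y$ is branched along a single smooth connected curve $B \in |-2K_Y|$ with $g(B) = 13 - e(Y) = 13 - 7 = 6$, and that the $S_5$-action on $Y$ is the standard one. Since $Y$ is not directly a subvariety of $\mathbb P_2$, the idea is to pass to a birational model: contract an $S_5$-orbit of $(-1)$-curves. Because the Petersen graph has no $S_5$-orbit of disjoint $(-1)$-curves of size less than $5$, and $Y$ has degree $5$, one cannot contract an orbit of disjoint curves equivariantly all the way down; instead I would use the standard presentation of $Y$ as $\mathbb P_2$ blown up at $4$ points in general position, together with the resulting $S_5$-equivariant morphisms. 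Concretely, there is a well-known $S_5$-equivariant degree-one map realizing $Y$ via the Cremona action; more usefully, the anticanonical-type linear systems give $S_5$-equivariant conic bundle structures, and the relevant blow-down $Y \to \mathbb P_2$ is equivariant only for the subgroup $S_4 \subset S_5$ stabilizing one of the blown-up points (or, dually, a $(-1)$-curve).

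The key steps, in order, would be: (1) identify an $S_4$-equivariant birational morphism $b : Y \to \mathbb P_2$ contracting four of the ten $(-1)$-curves, so that $S_4$ acts linearly on $\mathbb P_2$; (2) transport the branch curve $B$ to $\mathbb P_2$ via $b$ — by Remark~\ref{selfintblowdown} and the intersection pattern of $B$ with the contracted curves, the image $b(B) = C_{S_5}$ is a plane sextic whose singularities lie precisely at the four contracted points (this is where the minimal desingularization enters: $X$ is the minimal resolution of the double cover of $\mathbb P_2$ branched along $C_{S_5}$, matching the statement); (3) compute the space of $S_4$-semi-invariant sextics in $\mathbb P_2$ with the prescribed base/singularity behaviour at the four points, using that $B \in |-2K_Y|$ pulls back from $|{-2K_Y}|$ and the restriction map from plane sextics is surjective by a dimension count of the form $h^0(Y,\mathcal O(-2K_Y)) = 1 + \tfrac{1}{2}\cdot 2\cdot 3 \cdot 5 = 16$; (4) impose the remaining symmetry coming from the outer transposition completing $S_4$ to $S_5$ to cut the space of candidates down to the one-dimensional span of $C_{S_5}$, and check that the displayed polynomial is indeed $S_5$-semi-invariant and that the corresponding sextic is the unique one defining a smooth-away-from-the-four-points curve of the right genus; (5) finally verify, exactly as in the $L_2(7)$ and $A_6$ cases, that the central degree-two extension $E$ of $S_5$ acting on the double cover splits as $E_\mathrm{symp} \times C_2 = S_5 \times C_2$, so that the constructed surface does carry the required symplectic $S_5$-action centralized by $\sigma$, and conversely that any such $X$ arises this way — giving the claimed equivariant isomorphism.

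I expect the main obstacle to be step~(3)–(4): controlling the branch curve. Unlike the degree-three situation, where $Y \hookrightarrow \mathbb P_3$ anticanonically and $B = Y \cap Q$ falls out of a clean surjectivity-plus-semi-invariance argument, here the degree-five surface has no such low-dimensional equivariant projective embedding available with the full group $S_5$ acting linearly, so one is forced to work on $\mathbb P_2$ with only an $S_4$-linear action and to keep precise track of how $B$ meets the four contracted $(-1)$-curves (tangency versus transversality, controlled by Lemma~\ref{at most two} and Remark~\ref{self-int of Mori-fibers} applied to $(-1)$-curves on $Y$). Pinning down which $S_4$-invariant plane sextics through those four points actually extend to an $S_5$-semi-invariant curve on $Y$ — and confirming the resulting normalization/desingularization really is a K3-surface with the asserted equation — is the computational heart of the proof and the step most likely to require the explicit invariant theory of $S_5$ rather than a soft argument. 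The uniqueness then follows from the now-familiar orbit-size estimate: two distinct $S_5$-invariant sextics would meet in at most $(-2K_Y)^2 = 2\cdot 5 = 10$ points, contradicting that $B \cap C$ is a union of $S_5$-orbits, each of length at least $\min\{[S_5:H] : H < S_5 \text{ stabilizes a point of } B\}$, which exceeds $10$.
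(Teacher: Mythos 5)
Your outline follows the paper's route: blow down four of the ten $(-1)$-curves $S_4$-equivariantly to $b\colon Y \to \mathbb P_2$, identify $b(B)$ as an $S_4$-invariant sextic singular at the four points $p_i$, determine it by invariant theory, and finish with the splitting argument for the degree-two central extension (note that for $S_5$, which is not simple, this last step is not literally "as in the $L_2(7)$ and $A_6$ cases": one must rule out the possibility that the symplectic part maps onto $A_5$, which the paper does by a short case analysis). The genuine weakness is in the step you yourself defer, the determination of the sextic, where the tools you cite do not do the work. Lemma \ref{at most two} and Remark \ref{self-int of Mori-fibers} only say that each exceptional curve $E_i$ meets $B$ in one or two points; they do not decide transversality, let alone the position of the intersection points. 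What actually pins things down is the stabilizer $\mathrm{Stab}_{S_5}(E_i) \cong D_{12}$: its effective $S_3$-quotient acts on $E_i \cong \mathbb P_1$ without fixed points and with a unique orbit of length two, so $B \cap E_i$ is exactly that orbit, $b(B)$ has nodes at the $p_i$, and the two nodal tangent directions are known. These tangent-cone conditions, together with the analogous determination of $B \cap \widetilde L_{34}$ on the proper transform of the line through $p_3,p_4$ (transferred from $E_3$ by an explicit automorphism of $Y$ coming from a birational map of $\mathbb P_2$), are what reduce the invariant span $\{\sum a_i f_i\}$ to the single curve $C_{S_5}$. Your alternative, "impose invariance under the outer transposition," is not effective as stated: that element acts on $\mathbb P_2$ only birationally, and you give no mechanism for converting its invariance into linear conditions on plane sextics.

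Your fallback uniqueness count also fails numerically: two curves in $|-2K_Y|$ meet in $(-2K_Y)^2 = 4K_Y^2 = 20$ points, not $2\cdot 5 = 10$, while point stabilizers of the effective $S_5$-action on $B$ are cyclic of order at most $6$, so $S_5$-orbits in $B$ are only forced to have length $\geq 20$. The inequality is not strict, so no contradiction follows without additional input (multiplicities or a finer orbit analysis); unlike the $L_2(7)$ case, this soft counting does not close the uniqueness step here, and the paper instead obtains uniqueness constructively by solving for the coefficients $a_3,\dots,a_7$.
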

\begin{proof}
Let $B \subset Y$ denote the branch locus of the covering $X \to Y$. The curve $B$ is invariant with respect to the full automorphism group of $Y$.
The Del Pezzo surface $Y$ is the blow-up $b: Y \to \mathbb P_2$ of four points $p_1,p_2,p_3,p_4 \in \mathbb P_2$ in general position. We may choose coordinates $[x_1:x_2:x_3]$ on $\mathbb P_2$ such that $p_1=[1:0:0],\ p_2=[0:1:0],\ p_3=[0:0:1],\ p_4=[1:1:1]$,
and consider the $S_4$-action on $\mathbb P_2$ permuting the points $\{p_i\}$. The isotropy at each point $p_i$ is isomorphic to $S_3$ and induces an effective $S_3$-action on $E_i =b^{-1}(p_i)$. 

Let $E$ be any (-1)-curve on $Y$. By adjunction $E\cdot B=2$. Since $Y$ contains precisely ten (-1)-curves forming an $S_5$-orbit, the group $H = \mathrm{Stab}_{S_5}(E)$ has order 12 and all stabilizer groups of (-1)-curves in $Y$ are conjugate.
It follows that the group $H$ contains $S_3$, which is acting effectively on $E$, and therefore $H$ is isomorphic to the dihedral group of order 12. The points of intersection $B\cap E$  form an $H$-invariant subset of $E$. Since $H$ has no fixed points in $E$ and precisely one orbit $H.p = \{p,q\}$ consisting of two elements, it follows that $B$ meets $E$ transversally in $p$ and $q$.
In particular, each curve $E_i$
meets $B$ in two points and the image $C = b(B)$ is a sextic curve with four nodes $\{p_1, \dots, p_4\}$. 
It is invariant with respect to the action of $S_4$ given by permutation on $p_1, \dots p_4$. For simplicity, we first only consider the action of $S_3$ permuting $p_1,p_2,p_3$ and conclude that $C$ is given by $\{f=\sum a_i f_i =0 \}$ for
\begin{align*}
f_1&=x_1^6+x_2^6 +x_3^6, &f_5=&x_1^3x_2^3+x_1^3x_3^3+x_2^3x_3^3, \\
f_2&=x_1^5x_2 + x_1^5x_3+ x_1x_2^5 +x_1x_3^5+x_2^5x_3+x_2x_3^6, &f_6=&x_1^3x_2^2x_3+x_1^3x_2x_3^2+x_1^2x_2^3x_3 \\ 
f_3&=x_1^4x_2x_3+x_1x_2^4x_3+x_1x_2x_3^4,  &&+x_1^2x_2x_3^3+x_1x_2^3x_3^2+x_1x_2^2x_3^3,\\
f_4&=x_1^4x_2^2+x_1^4x_3^2+x_1^2x_2^4+x_1^2x_3^4+x_2^4x_3^2+x_2^2x_3^4, 
&f_7=&x_1^2x_2^2x_3^2.
\end{align*}
The fact that $C$ passes through $p_i$ and is singular at $p_i$ yields $a_1=a_2=0$ and
$3a_3+6a_4+3a_5+6a_6+a_7=0$.
The two tangent lines of $C$ at the node $p_i$ correspond to the unique $\mathrm{Stab}(E_i)$-orbit of length two in $E_i$. We consider the point $p_3$ and the subgroup $S_3 < S_4$ stabilizing $p_3$. The action of $S_3$ on $E_3$ is given by the linearized $S_3$-action on the set of lines through $p_3$. One checks that in local affine coordinates $(x_1,x_2)$ the unique orbit of length two corresponds to the line pair $x_1^2 -x_1 x_2+x_2^2 =0$.
Dehomogenizing $f$ at $p_3$, i.e., setting $x_3=1$, we obtain the local equation $f_\mathrm{dehom}$ of $C$ at $p_3$. 
The polynomial $f_\mathrm{dehom}$ modulo terms of order three or higher must be a multiple of $x_1^2 -x_1 x_2+x_2^2 =0$. Therefore $a_3 = -a_4$.

Next we consider the intersection of $C$ with the line $L_{34} = \{x_1=x_2\}$ joining $p_3$ and $p_4$. We know that $f|_{L_{34}}$ vanishes of order two at $p_3$ and $p_4$ and at one or two further points on $L_{34}$. 
Let $\widetilde L_{34}$ denote the proper transform of $L_{34}$ inside the Del Pezzo surface $Y$. The curve $\widetilde L_{34}$ is a (-1)-curve, hence its stabilizer $\mathrm{Stab}_G(\widetilde L_{34})$ is isomorphic to $D_{12} = S_3 \times C_2$. The factor $C_2$ acts trivially on $\widetilde L_{34}$. Since the intersection of $\widetilde L_{34}$ with $B$ is $\mathrm{Stab}_G(\widetilde L_{34})$ invariant, it follows that $\widetilde L_{34} \cap B$ is the unique $S_3$-orbit a length two in $\widetilde L_{34}$.  
We wish to transfer our determination of the unique $S_3$-orbit of length two in $E_3$ above to the curve $\widetilde L_{34}$ using an automorphism of $Y$ mapping $E_3$ to $\widetilde L_{34}$. Consider the automorphism $\varphi$ of $Y$ induced by the birational map of $\mathbb P_2$ given by $[x_1:x_2:x_3] \mapsto [x_1(x_3-x_2):x_3(x_1-x_2):x_1x_3]$
(cf.\ Theorem 10.2.2 in \cite{dolgachev}) and let $\psi$ be the automorphism of $Y$ induced by the permutation of the points $p_2$ and $p_3$ in $\mathbb P_2$. Then $\psi \circ \varphi$ is an automorphism of $Y$ mapping $E_3$ to $\widetilde L_{34}$. If $[X_1:X_2]$ denote homogeneous coordinates on $E_3$ induced by the affine coordinates $(x_1,x_2)$ in a neighbourhood of $p_3$, then a point $[X_1:X_2] \in E_3$ is mapped to the point corresponding to $[X_1:X_1: X_1-X_2] \in L_{34} \subset \mathbb P_2$. It was derived above that the unique $S_3$-orbit of length two in $E_3$ is given by $X_1^2-X_1X_2 +X_2^2$ and it follows that the unique $S_3$-orbit of length two in $\widetilde L_{34}$ corresponds to the points $[x_1:x_1:x_3] \in \mathbb P_2$ fulfilling $x_1^2 -x_1x_3 +x_3^2 =0$. 
Therefore, $f|_{L_{34}}$ is a multiple of polynomial given by $x_1^2(x_1-x_3)^2(x_1^2 -x_1x_3 +x_3^2)$.
Comparing coefficients with $f(x_1:x_1:x_3)$ yields
\begin{align*}
2a_3+2a_6 &= 2a_5 +2a_6, & 2a_4 + a_5&= 2a_4+a_3,\\
8a_4+4a_5 &= 2a_4+2a_6 +a_7, &-6a_4 -3a_5 &= 2a_5 +2a_6.
\end{align*}
We conclude $a_3=a_5= -a_4=2$, $a_6=1$, and $a_7=-6$. So if $X$ as in the proposition exists, it is the double cover of $Y$ branched along the proper transform of $\{f = 2f_3 -2f_4+2f_5+f_6-6f_7  = 0\}$ in $Y$.

In order to prove existence, let $X$ be the minimal desingularization of the double cover of $\mathbb P_2$ branched along $\{f=0\}$. Then $X$ is the double cover of the Del Pezzo surface $Y$ of degree five branched along the proper transform $D$ of $\{f=0\}$ in $Y$.
Since all automorphisms of $Y$ are induced by explicit biholomorphic or birational transformation of $\mathbb P_2$ 
one can check by direct computations that $D$ is in fact invariant with respect to the action of $\mathrm{Aut}(Y) = S_5$.

On $X$ there is an action of a central extension $E$ of $S_5$, $\{\mathrm{id}\} \to C_2 \to E \to S_5 \to \{\mathrm{id}\}$. Let $E_\mathrm{symp}$ be the subgroup of symplectic automorphisms in $E$. Since $E$ contains the antisymplectic covering involution $E_\mathrm{symp} \neq E$. The image $N$ of $E_\mathrm{symp}$ in $S_5$ is normal and therefore either $N \cong S_5$ or $N \cong A_5$.
If $N \cong A_5$ and $| E_\mathrm{symp}| =60$, then  $E_\mathrm{symp} \cong A_5$. Lifting any transposition from $S_5$ to an element $g$ of order two in $E$, the group generated by $g$ and $E_\mathrm{symp}$ inside $E$ is isomorphic to $S_5$. It follows that $E$ splits as $S_5 \times C_2$ and $E / E_\mathrm{symp} \cong C_2 \times C_2$. Since this is not cyclic, we obtain a contradiction.  
If $N \cong A_5$ and $| E_\mathrm{symp}| =120$, then $ E = E_\mathrm{symp} \times C_2$, where the outer $C_2$ is generated by the antisymplectic covering involution $\sigma$, and $E / C_2 = S_5$ implies that $E_\mathrm{symp} \cong S_5$. This is contradictory to the assumption $N \cong A_5$.
In the last remaining case $N \cong S_5$. Since $E_\mathrm{symp} \neq E$, also $E_\mathrm{symp} \cong S_5$ and $E$ splits as $E_\mathrm{symp} \times C_2$. 
It follows that the action of $S_5$ on $Y$ induces an symplectic action of $S_5$ on the double cover $X$ centralized by the antisymplectic covering involution. This completes the proof of the proposition.
\end{proof}
We summarize Propositions \ref{S5 on degree three} and \ref{S5 on degree five} as follows.
\begin{proposition}
Let $X$ be a K3-surface with a symplectic action of the group $S_5$ centralized by an antisymplectic involution $\sigma$ with $\mathrm{Fix}_X(\sigma) \neq \emptyset$. Then $X$ is equivariantly isomorphic to either Mukai's $S_5$-example or the minimal desingularization of the double cover of $\mathbb P_2$ branched along the sextic $C_{S_5}$.
\end{proposition}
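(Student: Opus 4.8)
The plan is to assemble this statement directly from the results already established in this subsection. First I would invoke Theorem~\ref{roughclassi}: since $|S_5|=120>96$ and $\mathrm{Fix}_X(\sigma)\neq\emptyset$, the quotient $Y=X/\sigma$ is a $G$-minimal Del Pezzo surface and the ramification locus is a single smooth connected curve $B$ of genus $g(B)=13-e(Y)$. Next I would apply the preceding degree lemma, which shows that $d=\deg(Y)=K_Y^2$ can only be $3$ or $5$. Thus there are exactly two cases to treat, and they are exhaustive.

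In the case $d=3$, Proposition~\ref{S5 on degree three} identifies the pair $(X,S_5)$ equivariantly with Mukai's $S_5$-example $\{\sum_{i=1}^5 x_i=\sum_{i=1}^6 x_i^2=\sum_{i=1}^5 x_i^3=0\}\subset\mathbb P_5$ equipped with the involution of Example~\ref{MukaiS5}. In the case $d=5$, Proposition~\ref{S5 on degree five} identifies $(X,S_5)$ equivariantly with the minimal desingularization of the double cover of $\mathbb P_2$ branched along the sextic $C_{S_5}$. Since every admissible $Y$ falls into one of these two cases, the proposition follows; and since the two quotient Del Pezzo surfaces have different degrees, the two resulting K3-surfaces are genuinely distinct.

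For this summary statement there is no real obstacle: the substantive work has already been discharged in the two cited propositions. The genuinely delicate step lies inside Proposition~\ref{S5 on degree five} — pinning down the coefficients of $C_{S_5}$ by forcing the branch curve to meet each of the ten $(-1)$-curves of $Y$ in the unique length-two $\mathrm{Stab}_G(E)\cong D_{12}$-orbit (transported between distinct $(-1)$-curves via explicit Cremona transformations of $\mathbb P_2$), together with the central-extension argument ruling out $N\cong A_5$ and ensuring that $S_5$ itself, rather than a nonsplit $C_2$-extension, acts symplectically on the double cover. Once those propositions are in hand, the present statement is a two-line consequence of Theorem~\ref{roughclassi} and the degree lemma.
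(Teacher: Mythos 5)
Your proposal is correct and follows exactly the paper's route: the paper proves this proposition simply by summarizing Propositions \ref{S5 on degree three} and \ref{S5 on degree five}, with the case split already justified by Theorem \ref{roughclassi} and the lemma restricting $\deg(Y)$ to $3$ or $5$. Your additional remarks correctly locate the substantive work inside the degree-five proposition, so nothing is missing.
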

%
%
%\subsection{The group $M_{20} = C_2^4 \rtimes A_5$}
\subsection{The group \texorpdfstring{$M_{20} = C_2^4 \rtimes A_5$}{M20}}
We assume that a K3-surface $X$ with a symplectic action of $M_{20}$ centralized by an antisymplectic involution $\sigma$ and $\mathrm{Fix}_X(\sigma) \neq \emptyset$ exists. Applying Theorem \ref{roughclassi} we see that $X \to Y$ is branched along a single $M_{20}$-invariant smooth curve $B$ with $g(B) \geq 3$ on the Del Pezzo surface $Y$. By Hurwitz's formula, $| \mathrm{Aut} (B) | \leq 84(g(B)-1)$,
the genus of $B$ must be at least twelve. As $B$ is linearly equivalent to $-2K_Y$, the adjunction formula 
$2g(B)-2 = (K_Y,B) + B^2 = 2K_Y^2 $
implies $\mathrm{deg}(Y) = K_Y^2 \geq 11$. Since the degree of a Del Pezzo surface is at most nine this is a contradiction yielding the following non-existence result. 
\begin{proposition}
There does not exist a K3-surface with a symplectic action of $M_{20}$ centralized by an antisymplectic involution $\sigma$ with $\mathrm{Fix}_X(\sigma) \neq \emptyset$.
\end{proposition}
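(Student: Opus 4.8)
The plan is to derive a contradiction from the existence of such an $X$ by chaining together the rough classification of Theorem \ref{roughclassi} with the Hurwitz bound on automorphisms of curves and the adjunction formula on Del Pezzo surfaces. First I would invoke Theorem \ref{roughclassi}: since $|M_{20}| = 960 > 96$, the quotient $Y = X/\sigma$ is a $G$-minimal Del Pezzo surface and the branch locus $B = \mathrm{Fix}_X(\sigma)$ is a single smooth connected curve with $g(B) \geq 3$. Because $\sigma$ centralizes the symplectic $M_{20}$-action, the group $M_{20}$ acts on the branch curve $B$, and this action is effective (a symplectic automorphism has only isolated fixed points, so no nontrivial element of $M_{20}$ can fix the curve $B$ pointwise).

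Next I would apply the Hurwitz automorphism bound $|\mathrm{Aut}(B)| \leq 84(g(B)-1)$ to the effective action $M_{20} \hookrightarrow \mathrm{Aut}(B)$. Since $|M_{20}| = 960$, this forces $84(g(B)-1) \geq 960$, hence $g(B) - 1 \geq 960/84 > 11$, so $g(B) \geq 13$, and in particular $g(B) \geq 12$. The third step translates this genus bound into a statement about $Y$. The curve $B$ is the branch locus of the double cover $\pi : X \to Y$ with $X$ a K3-surface, so $B$ is linearly equivalent to $-2K_Y$ (equivalently $B \in |-2K_Y|$, the anticanonical-squared system, which is exactly the condition making the double cover a K3). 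Applying adjunction on $Y$ to the smooth curve $B$:
\[
2g(B) - 2 = (K_Y \cdot B) + B^2 = (K_Y \cdot (-2K_Y)) + (-2K_Y)^2 = -2K_Y^2 + 4K_Y^2 = 2K_Y^2,
\]
so $g(B) = K_Y^2 + 1 = \deg(Y) + 1$.

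Combining $g(B) \geq 12$ with $g(B) = \deg(Y) + 1$ gives $\deg(Y) \geq 11$, which contradicts the classical fact that a Del Pezzo surface has degree $1 \leq \deg(Y) \leq 9$. This contradiction establishes the proposition. The argument has essentially no hard part once Theorem \ref{roughclassi} is in hand — the only point requiring a little care is justifying that the $M_{20}$-action on $B$ is effective, which follows immediately from the fact that symplectic automorphisms of a K3-surface have isolated fixed points and hence cannot act trivially on the positive-dimensional curve $B$; equivalently, if $g \in M_{20}$ acted trivially on $B = \mathrm{Fix}_X(\sigma)$ it would fix a curve pointwise, contradicting that $g$ is symplectic. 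Everything else is a routine dimension count, so I expect the write-up to be short; the substantive content was already absorbed into the rough classification and the standard structure theory of Del Pezzo surfaces.
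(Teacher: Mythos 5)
Your proposal is correct and follows essentially the same route as the paper: apply Theorem \ref{roughclassi}, use Hurwitz's bound $|\mathrm{Aut}(B)|\leq 84(g(B)-1)$ to force $g(B)\geq 12$, then adjunction with $B\sim -2K_Y$ to get $\deg(Y)=g(B)-1\geq 11$, contradicting $\deg(Y)\leq 9$. Your explicit remark that the $M_{20}$-action on $B$ is effective (since a symplectic automorphism cannot fix a curve pointwise) is a point the paper leaves implicit, but otherwise the arguments coincide.
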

%
%\subsection{The group $F_{384} = C_2^4 \rtimes S_4$}
\subsection{The group \texorpdfstring{$F_{384} = C_2^4 \rtimes S_4$}{F384}}
Before we prove non-existence of K3-surfaces with $F_{384} \times C_2$-sym\-me\-try, we note the following useful fact about $S_4$-actions on Riemann surfaces.
\begin{remark}\label{S4 not on g=1,2}
The group $S_4$ does not admit an effective action on a Riemann surface of genus one or two.
The first case can be excluded by using the explicit shape of the automorphism group of a Riemann surface $T$ of genus one, $\mathrm{Aut}(T)= L \ltimes T$ for $L \in \{C_2, C_4, C_6\}$. 
The second case is excluded by considering the quotient by the hyperelliptic involution branched at six points.
Since $S_4$ has no normal subgroup of order two, the induced action of $S_4$ on the quotient $\mathbb P_1$ is effective and therefore has precisely one orbit consisting of six points. 
The isotropy group at the corresponding points in the hyperelliptic curve would be isomorphic to $C_4 \times C_2$, which is contradiction.
\end{remark}
\begin{proposition}
There does not exists a K3-surface with a symplectic action of $F_{384}$ centralized by an antisymplectic involution $\sigma$ with $\mathrm{Fix}_X(\sigma) \neq \emptyset$.
\end{proposition}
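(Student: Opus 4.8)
The plan is to argue exactly as in the $M_{20}$ case, obtaining a contradiction from the genus bound forced by Hurwitz's formula together with the adjunction formula on the Del Pezzo quotient. First I would invoke Theorem \ref{roughclassi}: since $|F_{384}| = 384 > 96$, the quotient $Y = X/\sigma$ is a $G$-minimal Del Pezzo surface and the branch locus $B$ of $\pi : X \to Y$ is a single smooth connected curve with $g(B) \geq 3$, carrying an effective action of $F_{384}$. In particular $\mathrm{Aut}(B)$ contains a group of order $384$, so Hurwitz's bound $|\mathrm{Aut}(B)| \leq 84(g(B)-1)$ gives $g(B) \geq 1 + 384/84 > 5$, hence $g(B) \geq 6$.

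Next I would improve this genus bound using the normal subgroup $C_2^4 \lhd F_{384}$ and the Riemann--Hurwitz formula for the quotient map $B \to B/C_2^4$, combined with Remark \ref{S4 not on g=1,2}, which forbids an effective $S_4$-action on a curve of genus $1$ or $2$. Since $F_{384}/C_2^4 \cong S_4$ acts effectively on $B/C_2^4$, that quotient curve has genus $0$ or genus $\geq 3$. Riemann--Hurwitz for the degree-$16$ cover $B \to B/C_2^4$, namely $2g(B) - 2 = 16(2g(B/C_2^4) - 2) + \deg(\mathrm{Ram})$, then pushes $g(B)$ up substantially: even in the most favorable case $g(B/C_2^4) = 0$ one gets a large lower bound for $g(B)$ once the ramification contribution is accounted for. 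A crude but sufficient version: the involutions in $C_2^4$ act symplectically on $X$, hence each has only isolated fixed points on $X$, so each nontrivial element of $C_2^4$ has at most a bounded number of fixed points on $B$; bounding $\deg(\mathrm{Ram})$ from below by zero already gives $2g(B) - 2 \geq 16(-2)$, which is weak, so instead I would bound it from below by the ramification forced at points where larger isotropy occurs. In any event the target is simply to show $g(B) \geq 12$.

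Once $g(B) \geq 12$ is in hand, the adjunction formula on $Y$ finishes the argument exactly as for $M_{20}$: since $B \in |-2K_Y|$, we have $2g(B) - 2 = (K_Y \cdot B) + B^2 = (K_Y \cdot -2K_Y) + (-2K_Y)^2 = 2K_Y^2 = 2\deg(Y)$, so $\deg(Y) = g(B) - 1 \geq 11$. But a Del Pezzo surface has degree at most $9$, a contradiction. Hence no such K3-surface exists.

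The main obstacle is the intermediate genus estimate: the bare Hurwitz bound only yields $g(B) \geq 6$, which is not enough, so one genuinely needs the finer analysis via the $C_2^4$-quotient and the fact (Remark \ref{S4 not on g=1,2}) that $S_4$ cannot act effectively in genus $1$ or $2$, together with a careful lower bound on the ramification of $B \to B/C_2^4$ coming from the large isotropy groups present in $F_{384}$. Working out the precise contributions to $\deg(\mathrm{Ram})$ — equivalently, identifying the branch data of the $F_{384}$-action on $B$ compatibly with the symplectic constraint that elements of order $2,3,4,5$ have $8,6,4,4$ fixed points respectively on $X$ (Remark \ref{order of symp aut}) — is the computational heart of the proof, but it is routine orbit-counting once set up correctly.
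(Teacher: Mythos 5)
Your opening (Theorem \ref{roughclassi} applies, $B$ is a single smooth curve with an effective $F_{384}$-action, Hurwitz gives $g(B)\geq 6$, and adjunction gives $\deg(Y)=g(B)-1$) matches the paper, but the core of your plan --- proving $g(B)\geq 12$ so that the $M_{20}$ argument goes through verbatim --- has a genuine gap, and in fact cannot be carried out by the Riemann--Hurwitz analysis you sketch. If $N=C_2^4$, every nontrivial stabilizer of $N$ at a point of $B$ is cyclic, hence equal to $C_2$, so every branch point of $B\to Q=B/N$ contributes exactly $16-8=8$ to the Euler defect. The formula $e(B)=16\,e(Q)-8b$ then gives $g(B)=-15+16g(Q)+4b$, i.e.\ only the congruence $g(B)\equiv 1 \pmod 4$, not a lower bound of $12$: the value $g(B)=9$ (take $g(Q)=0$, $b=6$) survives, and this corresponds exactly to $\deg(Y)=8$, i.e.\ $Y\cong\mathbb P_1\times\mathbb P_1$, which your argument never excludes. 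There is also no hope of forcing the ramification up from below as you suggest: the symplectic constraint only bounds the number of fixed points of each involution on $X$ \emph{from above} (by $8$), so it caps the ramification rather than producing a large $\deg(\mathrm{Ram})$. Since Hurwitz permits $F_{384}$ to act on curves of genus $9$ and $10$, the target $g(B)\geq 12$ is simply not available; the analogue of the $M_{20}$ proof fails because $384<84\cdot 11$.

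The paper's route is different and you would need its extra ingredients. First it pins down $Y$: from $g(B)\geq 6$ one gets $\deg(Y)\geq 5$; degrees $5,6,7$ are excluded because the stabilizer of a $(-1)$-curve has order at most $12$ (Remark \ref{stab of minus one curve}), hence index at least $32$ in $F_{384}$, incompatible with the $10$, $6$, or $3$ exceptional curves; and $Y\cong\mathbb P_1\times\mathbb P_1$ is excluded because the two projections are equivariant for a subgroup of index two, of order $192>96$, contradicting Lemma \ref{conicbundle}. This forces $Y\cong\mathbb P_2$ and $g(B)=10$. Only then does the curve-quotient argument enter, and with the realization $F_{384}=C_4^2\rtimes S_4$: Remark \ref{S4 not on g=1,2} forces $Q=B/C_4^2$ rational, so the branch contribution equals $16\cdot 2-(-18)=50$, while each branch point contributes $8$ or $12$ (isotropy $C_2$ or $C_4$), so the total is divisible by $4$ --- a contradiction. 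So the contradiction is a divisibility statement at $g=10$, not a genus bound; your proposal is missing both the elimination of the low-degree Del Pezzo surfaces and of $\mathbb P_1\times\mathbb P_1$, and the step that replaces the unobtainable bound $g(B)\geq 12$.
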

\begin{proof}
As above, assume that a K3-surface $X$ with these properties exists and apply Theorem \ref{roughclassi} to see that $X \to Y$ is branched along a single $F_{384}$-invariant smooth curve $B$ on the Del Pezzo surface $Y$. By Hurwitz's formula the genus of $B$ is at least 6.
It now follows from adjunction, $K_Y^2 = g(B)-1$, that the degree of the Del Pezzo surface $Y$ is at least five.
We consider the action of $F_{384}$ on the configuration of (-1)-curves on $Y$ and recall that the order of a stabilizer of a (-1)-curve in $Y$ is at most twelve (cf.\ Remark \ref{stab of minus one curve}) and therefore has index greater than or equal to $32$ in $G$. It follows that $Y$ is either $\mathbb P_1 \times \mathbb P_1$ or $\mathbb P_2$. In the first case, the canonical projections of $\mathbb P_1 \times \mathbb P_1$ are equivariant with respect to a subgroup of index two in $F_{384}$ and thereby contradict Lemma \ref{conicbundle}. Consequently, $Y \cong \mathbb P_2$
and $g(B) =10$.

We use the realization of $F_{384}$ as a semi-direct product $C_4^2 \rtimes S_4$ (cf.\ \cite{mukai}) and consider the quotient $Q$ of the branch curve $B$ by the normal subgroup $N = C_4^2$. On $Q$ there is the induced action of $S_4$. It follows from the remark above that $Q$ is either rational or $g(Q) >2$. In the second case the Riemann-Hurwitz formula applied to the covering $ B \to Q$,
\[
-18 = e(B) = 16 e(Q) - \text{branch point contributions} \leq -64,
\]
yields a contradiction.
It follows that $Q$ is a rational curve and that the branch point contribution of the covering $B \to Q$ is 50. Since isotropy groups at points in $B$ must be cyclic, the only possible isotropy subgroups of $N = C_4^2$ at points in $B$ are $C_2$ and $C_4$ and have index four or eight. The full branch point contribution must therefore be a multiple of four. This contradiction yields the non-existence claimed. 
\end{proof}
%
%\subsection{The group $A_{4,4} = C_2^4 \rtimes A_{3,3}$}
\subsection{The group \texorpdfstring{$A_{4,4} = C_2^4 \rtimes A_{3,3}$}{A4,4}}
By $S_{p,q}$ for $p+q =n$ we denote a subgroup $S_p \times S_q$ of $S_n$ preserving a partition of the set $\{1,\dots, n\}$ into two subsets of cardinality $p$ and $q$. The intersection of $S_{p,q}$ with $A_n$ is denoted by $A_{p,q}$.
\begin{proposition}
There does not exists a K3-surface with a symplectic action of $A_{4,4}$ centralized by an antisymplectic involution $\sigma$ with $\mathrm{Fix}_X(\sigma) \neq \emptyset$.
\end{proposition}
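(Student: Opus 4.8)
The plan is to argue by contradiction exactly along the lines of the preceding $M_{20}$ and $F_{384}$ cases. Suppose a K3-surface $X$ with the stated properties exists. Since $|A_{4,4}| = 288 > 96$, Theorem \ref{roughclassi} applies: $Y := X/\sigma$ is a $G$-minimal Del Pezzo surface and $B := \mathrm{Fix}_X(\sigma)$ is a smooth connected curve of genus $g(B) \geq 3$ with $B \sim -2K_Y$. The group $G$ acts faithfully on $Y$ (its kernel would lie in the deck group $\langle\sigma\rangle$ of $\pi\colon X\to Y$, which contains no nontrivial symplectic automorphism) and faithfully on $B$ (a nontrivial symplectic automorphism acting trivially on the curve $B$ would have a one-dimensional fixed set, contradicting Remark \ref{order of symp aut}). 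Hence $288 = |G| \leq |\mathrm{Aut}(B)| \leq 84(g(B)-1)$ by Hurwitz, so $g(B) \geq 5$, and adjunction $2g(B)-2 = (K_Y+B)\cdot B = 2K_Y^2$ gives $\deg Y = K_Y^2 = g(B)-1 \geq 4$.

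Next I would run through the $G$-minimal Del Pezzo surfaces of degree at least $4$ and eliminate each. As noted above, the blow-up of $\mathbb P_2$ in one or two points is never $G$-minimal, which disposes of degrees $7$ and $8$ (the Hirzebruch surface $\mathbb F_1$). For degrees $4$, $5$, $6$ I would use the configuration of $(-1)$-curves: by Remark \ref{stab of minus one curve} the stabilizer in $G$ of a $(-1)$-curve is cyclic or dihedral of order at most $12$, so every $G$-orbit of $(-1)$-curves has length at least $288/12 = 24$, whereas a Del Pezzo surface of degree $4$, $5$, $6$ carries only $16$, $10$, $6$ such curves, so their union cannot decompose into $G$-orbits. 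This leaves $Y \cong \mathbb P_1 \times \mathbb P_1$ and $Y \cong \mathbb P_2$.

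If $Y \cong \mathbb P_1 \times \mathbb P_1$, the subgroup $G_0 < G$ of index at most two preserving the two rulings turns one of the two projections $\mathbb P_1 \times \mathbb P_1 \to \mathbb P_1$ into a $G_0$-equivariant conic bundle; since $\mathbb P_1 \times \mathbb P_1$ carries no curve of negative self-intersection it is its own $G_0$-minimal model, so Lemma \ref{conicbundle} forces $|G_0| \leq 96$, contradicting $|G_0| \geq 144$ — precisely the argument used for $F_{384}$. If $Y \cong \mathbb P_2$, a faithful $G$-action on $\mathbb P_2$ restricts to a faithful action of the normal subgroup $C_2^4 \lhd A_{4,4}$, i.e.\ to an embedding $C_2^4 \hookrightarrow \mathrm{PGL}_3(\mathbb C)$. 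No such embedding exists: a finite lift of $C_2^4$ to $\mathrm{SL}_3(\mathbb C)$ is a group $\Gamma$ of order $16$ or $48$ with $\Gamma \cap Z(\mathrm{SL}_3(\mathbb C))$ an abelian normal subgroup of index $16$, and if $\Gamma$ is abelian it is diagonalizable with $2$-torsion at most $C_2^2$, while if $\Gamma$ is non-abelian its faithful $3$-dimensional representation would contradict Ito's theorem, since $3$ does not divide $16$; equivalently one may cite Blichfeldt's classification of finite subgroups of $\mathrm{PGL}_3(\mathbb C)$, none of which contains $C_2^4$. Thus no admissible surface $Y$ exists, proving the proposition.

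I expect the exclusion of $Y \cong \mathbb P_2$ to be the only genuinely new step: degrees $4$–$6$ are settled by counting $(-1)$-curves just as before, and $\mathbb P_1 \times \mathbb P_1$ is handled verbatim as for $F_{384}$, but in degree $9$ there are no $(-1)$-curves, so one must feed in the representation theory of $A_{4,4}$, concretely the non-embeddability of $C_2^4$ into $\mathrm{PGL}_3(\mathbb C)$.
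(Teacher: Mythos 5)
Your proof is correct, but it takes a genuinely different route from the paper. The paper never analyzes the possible degrees of $Y$: it uses the semidirect product structure $A_{4,4}=C_2^4\rtimes A_{3,3}$ and applies Riemann--Hurwitz to the quotient map $B\to Q=B/C_2^4$. Since $C_3\times C_3<A_{3,3}$ acts on $Q$, the curve $Q$ is not rational, so $e(Q)\leq 0$; the only possible $C_2^4$-isotropy on $B$ is $C_2$ (isotropy on a curve in $X$ must be cyclic), so each branch point contributes $8$, and these points come in orbits of length $\geq 3$ under $C_3\times C_3$, giving total branch contribution $\geq 24$. Hence $g(B)\geq 13$, contradicting $g(B)=\deg(Y)+1\leq 10$ from adjunction. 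Your argument instead gets only $\deg(Y)\geq 4$ from Hurwitz and then eliminates the Del Pezzo surfaces of degree $4$--$9$ case by case: degrees $7$ and $8$ ($\mathbb F_1$) by non-minimality, degrees $4,5,6$ by the $(-1)$-curve orbit count via Remark \ref{stab of minus one curve}, $\mathbb P_1\times\mathbb P_1$ by Lemma \ref{conicbundle} applied to the index-two subgroup preserving the rulings (exactly as the paper does for $F_{384}$), and $\mathbb P_2$ by the non-embeddability of $C_2^4$ in $\mathrm{PGL}_3(\mathbb C)$. All of these steps are sound; the last one is the genuinely new input, and it is correct: the preimage of $C_2^4$ in $\mathrm{SL}_3(\mathbb C)$ is a central extension by $C_3$ which splits (as $\gcd(16,3)=1$), so one gets an abelian $C_2^4\subset\mathrm{SL}_3(\mathbb C)$, which is diagonalizable and hence at most $C_2^2$ --- note that your non-abelian alternative therefore never occurs, and the appeal to Ito's theorem is both unnecessary and slightly off (the given $3$-dimensional representation need not be irreducible), but this does not affect the argument. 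In comparison, the paper's proof is shorter, uniform with the $M_{20}$ and $F_{384}$ cases, and avoids any classification of automorphism groups of Del Pezzo surfaces, while yours is more geometric and makes explicit which quotient surfaces are excluded for which structural reason, at the cost of the extra representation-theoretic lemma and the $(-1)$-curve bookkeeping.
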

\begin{proof}
We again assume that a K3-surface with these properties exists and applying Theorem \ref{roughclassi} we see that $X \to Y$ is branched along a single $A_{4,4}$-invariant smooth curve $B$ on the Del Pezzo surface $Y$. The group $A_{4,4}$ is a semi-direct product $C_2^4 \rtimes A_{3,3}$ (see e.g.\ \cite{mukai}). We consider the quotient $Q$ of $B$ by the normal subgroup $N \cong C_2^4$. On $Q$ there is an action of $A_{3,3}$.
Since $A_{3,3}$ contains the subgroup $C_3 \times C_3$,
it follows that $Q$ not rational. In particular, $e(Q) \leq 0$. We apply the Riemann-Hurwitz formula to the covering $B \to Q$,
\[
2-2g(B) = e(B) = 16 e(Q) - \text{branch point contributions} \leq  - \text{branch point contributions}.
\]
As above, isotropy groups must be cyclic and the maximal possible isotropy group of the $C_2^4$-action on $B$ is $C_2$, which has index eight in $C_2^4$. Consequently, the branch point contribution at each branch point is eight.
The action of $C_3 \times C_3 < A_{3,3}$ on $Q$ has orbits of length greater than or equal to three. Therefore, the total branch point contribution must be greater than or equal to $24$. In particular, $g(B) = \mathrm{deg}(Y) +1 \geq 13$ contrary to $\mathrm{deg}(Y) \leq 9$.
\end{proof}
%
%\subsection{The groups $T_{192} = (Q_8 * Q_8) \rtimes S_3$ and $H_{192} = C_2^4  \rtimes D_{12}$}
\subsection{The groups \texorpdfstring{$T_{192} = (Q_8 * Q_8) \rtimes S_3$}{T192} and \texorpdfstring{$H_{192} = C_2^4  \rtimes D_{12}$}{H192}}
By $Q_8$ we denote the quaternion group $\{\pm 1, \pm I, \pm J, \pm K\}$ where $I^2= J^2= K^2 = IJK = -1$. 
The central product $ Q_8 * Q_8 $ is defined as the quotient of $Q_8  \times Q_8$ by the central involution $(-1, -1)$.
Most importantly, we benefit from the observation that
both groups $T_{192}$ and $H_{192}$ are semi-direct products $C_2^3 \rtimes S_4$ (cf.\ \cite {mukai}).

We prove the following non-existence result. 
\begin{proposition}\label{192}
For $G =T_{192}$ or $G = H_{192}$ there does not exists a K3-surface with a symplectic action of $G$ centralized by an antisymplectic involution $\sigma$ with $\mathrm{Fix}_X(\sigma) \neq \emptyset$.
\end{proposition}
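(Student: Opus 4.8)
\emph{Proof proposal.} Assume for contradiction that such an $X$ exists. Since $|G|=192>96$, Theorem \ref{roughclassi} applies: the quotient $Y=X/\sigma$ is a $G$-minimal Del Pezzo surface and $\pi\colon X\to Y$ is branched along a single smooth connected curve $B$ with $B\sim -2K_Y$. Adjunction on $Y$ gives $2g(B)-2=(-2K_Y)\cdot(-K_Y)=2K_Y^2$, so $g(B)=\deg Y+1\le 10$. Moreover $G$ acts faithfully on $B$: an element fixing $B$ pointwise would, via a symplectic lift, fix the curve $\pi^{-1}(B)\subset X$ pointwise, contradicting the fact that nontrivial symplectic automorphisms have isolated fixed points (Remark \ref{order of symp aut}). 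The strategy is to read off $Y$ from the branching data of the Galois cover $B\to B/G$ and then to exclude the single surface that survives.

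I would first record that every element of $G$ has order at most $6$. Using the presentation $G=C_2^3\rtimes S_4$, a $4$-cycle $\rho\in S_4$ acts on $C_2^3\cong\mathbb F_2^3$ as a single unipotent Jordan block, so $1+\rho+\rho^2+\rho^3=(1+\rho)^3=0$ and hence $(\rho w)^4=\mathrm{id}$ for every $w\in C_2^3$; the remaining element types ($3$-cycles times a centralizing involution, double transpositions together with elements of $C_2^3$, etc.) manifestly have order in $\{1,2,3,4,6\}$, and an element of order $12$ would force a commuting pair consisting of a $3$-cycle and a $4$-cycle in $S_4$, which does not exist. In particular $G$ has no element of order $7$, $8$ or $\ge 9$.

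Now the Riemann--Hurwitz relation for $B\to B/G$ reads $192\,\bigl(2h-2+\sum_i(1-\tfrac1{m_i})\bigr)=2g(B)-2$, where $(m_1,\dots,m_k)$ is the signature over a base of genus $h$. Since $0<2g(B)-2\le 18$ we get $h=0$ and $0<\sum_i(1-\tfrac1{m_i})-2\le\tfrac{3}{32}$. Enumerating the finitely many triangle (and $\ge4$-point) signatures with this property and discarding those requiring an element of order $7$, $8$ or $\ge 9$ in $G$, the only survivors are $(2,4,6)$ and $(3,3,4)$; both give $2g(B)-2=192/12=16$, so $g(B)=9$ and $\deg Y=8$. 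A $G$-minimal Del Pezzo surface of degree $8$ is $\mathbb P_1\times\mathbb P_1$, since the blow-up of $\mathbb P_2$ at one point has a unique, hence $G$-invariant, $(-1)$-curve and is therefore not $G$-minimal. Thus $Y\cong\mathbb P_1\times\mathbb P_1$ and $B$ is a smooth $G$-invariant curve of bidegree $(4,4)$.

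It remains to exclude $Y\cong\mathbb P_1\times\mathbb P_1$, which is the decisive and hardest step. Write $\mathrm{Aut}(\mathbb P_1\times\mathbb P_1)=(\mathrm{PGL}_2(\mathbb C)\times\mathrm{PGL}_2(\mathbb C))\rtimes C_2$ and set $G_0=G\cap(\mathrm{PGL}_2(\mathbb C)\times\mathrm{PGL}_2(\mathbb C))$, of index $1$ or $2$. If $G$ preserves both rulings ($G_0=G$), then the first projection $p_1\colon B\to\mathbb P_1$ has degree $4$ and, being invariant under $K_1:=\ker(\rho_1\colon G\to\mathrm{PGL}_2)$, factors through $B/K_1$; hence $|K_1|$ divides $4$ and $|\rho_1(G)|\in\{48,96,192\}$, none of which is the order of a finite subgroup of $\mathrm{PGL}_2(\mathbb C)$ all of whose elements have order $\le 6$ (such a group lies among $C_n$, $D_{2n}$ with $n\le 6$, $A_4$, $S_4$, $A_5$, so has order $\le 60$) -- a contradiction. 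If $G$ swaps the rulings, applying the same argument to $G_0$ (of order $96$) forces $\rho_i(G_0)\cong S_4$ with $K_i:=\ker(\rho_i|_{G_0})$ of order $4$ and $B/K_i\cong\mathbb P_1$ for $i=1,2$ (a direct check of the centre of $G_0$ rules out $K_i\cong C_4$, so $K_i\cong V_4$); together with the fact that the ruling-swap conjugates $K_1$ onto $K_2$ inside the $2$-radical $O_2(G_0)\cong C_2^4$, this pins down the $G$-action on $\mathbb P_1\times\mathbb P_1$ very explicitly. I would finish -- either by the ensuing group-theoretic bookkeeping for $C_2^3\rtimes S_4$ inside $(\mathrm{PGL}_2)^2\rtimes C_2$, or, exactly as in the exclusion of $\mathbb P_1\times\mathbb P_1$ for $G=S_5$ carried out earlier, by listing the $G_0$-semi-invariant forms of bidegree $(4,4)$ -- by showing that no smooth $G$-invariant curve of bidegree $(4,4)$ exists. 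Since the whole argument uses only the presentation $G=C_2^3\rtimes S_4$, it treats $T_{192}$ and $H_{192}$ simultaneously. The main obstacle is precisely this ruling-swapping sub-case: everything up to the identification $Y\cong\mathbb P_1\times\mathbb P_1$, and the ruling-preserving half of the final step, is routine.
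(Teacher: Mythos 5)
Your reduction to $Y\cong\mathbb P_1\times\mathbb P_1$ is correct and genuinely different from the paper: instead of excluding the degrees $3\le d\le 7$ via orbits of $(-1)$-curves and killing $\mathbb P_2$ by a Riemann--Hurwitz count for $B\to B/C_2^3$, you bound element orders in $C_2^3\rtimes S_4$ by $6$ and run Riemann--Hurwitz for the full cover $B\to B/G$, which forces $g(B)=9$, $\deg Y=8$, hence $Y\cong\mathbb P_1\times\mathbb P_1$ with $B$ of bidegree $(4,4)$; your ruling-preserving sub-case (degree-$4$ projection forces $|\ker\rho_1|\mid 4$, so $|\rho_1(G)|\ge 48$, impossible in $\mathrm{PGL}_2(\mathbb C)$ with element orders $\le 6$) is also fine, and this is a tidy shortcut.

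However, the proof is not complete: the ruling-swapping sub-case, which you yourself identify as the main obstacle, is only a statement of intent (``group-theoretic bookkeeping'' or ``listing the $G_0$-semi-invariant forms of bidegree $(4,4)$''), and some of the structural assertions you lean on there ($K_i\cong V_4$ ``by a check of the centre'', $K_1,K_2\subset O_2(G_0)\cong C_2^4$) are unverified. This is exactly where the paper does its real work: it proves that the ruling-preserving subgroup meets the normal $C_2^3$ in a $C_2\times C_2$, extracts a factor-swapping involution $\sigma\in N\setminus A$ commuting with an order-three element $\lambda$ of the commutator subgroup, puts the pair in the normal form $\sigma(z,w)=(w,z)$, $\lambda(z,w)=(\lambda_z z,\lambda_z w)$, and then lists the $\lambda$-invariant bidegree-$(4,4)$ monomials to conclude that at least three of the four $\lambda$-fixed points of $Y$ lie on $B$; the contradiction is then that the symplectic lift of $\lambda$ to $X$ would have at most five fixed points, whereas an order-three symplectic automorphism of a K3-surface has exactly six (Remark \ref{order of symp aut}). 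Note in particular that the paper does \emph{not} show that no smooth $G$-invariant $(4,4)$-curve exists; the contradiction is a fixed-point count on the K3-surface, so your proposed finish (``no smooth invariant curve of bidegree $(4,4)$'') may not be reachable by semi-invariant listing alone, and at the very least the needed computation is missing. As it stands there is a genuine gap at the decisive step.
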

Assume that a K3-surface as in the proposition exists. Applying Theorem \ref{roughclassi} we see that $X \to Y$ is branched along a single $G$-invariant smooth curve $C$ on the Del Pezzo surface $Y$. The genus of $C$ is at least four by Hurwitz's formula and therefore $d = \mathrm{deg}(Y) \geq 3$.
%
%\subsection*{The cases $3 \leq  d \leq 7$}
\subsection*{The cases \texorpdfstring{$3 \leq  d \leq 7$}{d=3,4,5,6,7}}
We consider the action of $G$ on the Del Pezzo surface $Y$ of degree $\geq 3$, in particular the induced action on its configuration of (-1)-curves. By Remark \ref{stab of minus one curve} 
the stabilizer of a (-1)-curve in $Y$ has index $\geq 16$ in $G$ and we may immediately exclude the cases $\mathrm{deg}(Y) = 3,5,6,7$.
The automorphism group of a Del Pezzo surface of degree four is $C_2^4 \rtimes \Gamma$ for $\Gamma \in \{C_2, C_4, S_3, D_{10}\}$ (cf.\ Section 10.2.2 in \cite{dolgachev}). In particular, the maximal possible order is 160 and therefore $\mathrm{deg}(Y) \neq 4$.
It remains to consider the cases $Y \cong \mathbb P_2$ and $Y \cong \mathbb P_1 \times \mathbb P_1$
%
%\subsection*{The case $Y \cong \mathbb P_2$}
\subsection*{The case \texorpdfstring{$Y \cong \mathbb P_2$}{Y=P2}}
If $Y \cong \mathbb P_2$ than $g(C) =10$. 
We consider the quotient $Q$ of $C$ by the normal subgroup $N = C_2^3$ and obtain
\[
 -18 = e(C) = 8\cdot e(Q) - \text{branch point contributions}.
\]
By Lemma \ref{S4 not on g=1,2} 
the quotient $Q$ is either rational or $g(Q) >2$, i.e., $e(Q) \leq -4$. Since the second case is impossible it follows that $Q$ is a rational curve and the
branch point contribution must be $34$. The maximal possible isotropy subgroup of $N = C_2^3$ at a point in $C$ is $C_2$ and the full branch point contribution must be divisible by four. This is a contradiction.
%
%\subsection*{The case $Y \cong \mathbb P_1 \times \mathbb P_1$}
\subsection*{The case \texorpdfstring{$Y \cong \mathbb P_1 \times \mathbb P_1$}{Y = P1 times P1}}
Assume that $Y \cong \mathbb P_1 \times \mathbb P_1$. The canonical projection $\pi_{1,2}: Y \to \mathbb P_1$ is equivariant with respect to a subgroup $\tilde G$ of $G$ of index at most two. It follows that $\tilde G$ fits into the exact sequences
$ \{\mathrm{id}\} \to I_i \to \tilde G \overset{(\pi_i)_*}{\to} H_i\to  \{\mathrm{id}\}$, 
where $I_i \cong C_2 \times C_2$ is the ineffectivity of the induced $H$-action on the base $\pi_i(Y)$ and $H_i \cong S_4$ (cf.\ proof of Lemma \ref{conicbundle}).
We consider the realization of $G$ as a semi-direct product $C_2^3 \rtimes S_4 = N \rtimes S_4$ and denote by $A = \tilde G \cap N$ the intersection of the normal subgroup $N$ with the index two normal subgroup $\tilde G$.
\begin{lemma}
The group $A$ is isomorphic to $C_2 \times C_2$.
\end{lemma}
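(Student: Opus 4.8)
The plan is to determine $|A|$ by confronting the two ruling-sequences of $\tilde G$ with one another. Recall from the discussion preceding the lemma that $\tilde G \lhd G$ has index two and preserves both rulings of $Y = \mathbb P_1 \times \mathbb P_1$, surjecting for $i = 1,2$ onto $H_i \cong S_4$ with kernel $I_i \cong C_2\times C_2$ via the induced action $(\pi_i)_*$ on the $i$-th base. The action of $\tilde G$ on $Y$ is faithful: a symplectic automorphism of $X$ cannot act as $\sigma$, so no nontrivial element of $\tilde G < \mathrm{Aut}_\mathrm{symp}(X)$ acts trivially on $X/\sigma$. Hence $I_1 \cap I_2 = \{\mathrm{id}\}$, and $((\pi_1)_*,(\pi_2)_*)$ embeds $\tilde G$ into $H_1 \times H_2 \cong S_4 \times S_4$; I will identify $\tilde G$ with its image. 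Then $I_1 = \ker(\pi_1)_*$ lies in $\{\mathrm{id}\}\times H_2$, where its image under $(\pi_2)_*$ is a normal copy of $C_2\times C_2$ in $H_2 \cong S_4$ and therefore equals the unique normal Klein four-subgroup $V_4 \lhd S_4$; symmetrically $I_2 = V_4 \times \{\mathrm{id}\}$. In particular $V_4 \times V_4 = I_1 I_2 \lhd \tilde G$.

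Next, since $\tilde G$ is normal in $G$ the isomorphism theorem gives $N/A = N/(N\cap\tilde G)\cong N\tilde G/\tilde G \hookrightarrow G/\tilde G \cong C_2$, so $|A| \in \{4,8\}$. As $A \leq N \cong C_2^3$ is automatically elementary abelian, it suffices to exclude the possibility $A = N$, that is, $N \leq \tilde G$.

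So assume $N \leq \tilde G$. Then $\tilde G/N$ is the unique index-two subgroup of $G/N \cong S_4$, whence $\tilde G/N \cong A_4$. On the other hand $(\pi_i)_*(N)$ is the image of the normal subgroup $N$ under the surjection $\tilde G \to H_i$, hence a normal elementary abelian subgroup of $H_i \cong S_4$; it is nontrivial, since $|N| = 8 > 4 = |I_i|$ rules out $N \leq \ker(\pi_i)_* = I_i$. The only such subgroup of $S_4$ being $V_4$, we get $(\pi_i)_*(N) = V_4$ for $i = 1,2$, so $N \leq V_4 \times V_4 \leq \tilde G$. But $|V_4 \times V_4| = 16$ and $|N| = 8$, so $(V_4\times V_4)/N$ is a normal subgroup of order two in $\tilde G/N \cong A_4$ — a contradiction, as the only nontrivial proper normal subgroup of $A_4$ is its Klein four-subgroup. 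Therefore $N \not\leq \tilde G$, so $|A| = 4$ and $A \cong C_2 \times C_2$.

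The step I expect to need the most care is the first paragraph: after embedding $\tilde G$ into $S_4 \times S_4$ one must check that each ineffectivity $I_i$ corresponds to a \emph{normal} Klein four-subgroup of the opposite factor — this relies on $I_1 \cap I_2 = \{\mathrm{id}\}$ (equivalently, on the faithfulness of the $\tilde G$-action on $Y$) together with the normality of $I_i$ in $\tilde G$. Once $V_4 \times V_4 \leq \tilde G$ is in hand, the contradiction with $\tilde G/N \cong A_4$ is immediate and the lemma follows.
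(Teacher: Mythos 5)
Your proof is correct, but it follows a genuinely different route from the paper's. The paper assumes $A=N$, notes $\tilde G = N \rtimes A_4$, and then runs a three-case analysis on the intersection $I_1 \cap N$: if it is trivial, $N \cong C_2^3$ would act effectively on $\mathbb P_1$; if $I_1 \subset N$, the quotient $\tilde G/I_1 \cong C_2 \times A_4$ would act effectively on $\mathbb P_1$; and if $I_1 \cap N \cong C_2$, an explicit computation in the semidirect product (writing $I_1 = \{(\mathrm{id},\mathrm{id}),(\sigma_1,\mathrm{id}),(\sigma_2,a),(\sigma_3,a)\}$ and using normality of $I_1$) forces a nontrivial $a \in A_4$ to be centralized by all of $A_4$, a contradiction. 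You instead use both rulings simultaneously: faithfulness of the $\tilde G$-action on $Y$ (which holds because the deck group of $X \to Y$ is $\langle\sigma\rangle$ and $\tilde G$ is symplectic) gives $I_1 \cap I_2 = \{\mathrm{id}\}$, hence an embedding $\tilde G \hookrightarrow H_1 \times H_2 \cong S_4 \times S_4$ under which $I_1I_2$ becomes the normal subgroup $V_4 \times V_4$ of order 16; then $N \leq \tilde G$ would force $N \leq V_4 \times V_4$ and produce a normal subgroup of order two in $\tilde G/N \cong A_4$. Each step checks out: the images $(\pi_2)_*(I_1)$ and $(\pi_i)_*(N)$ are normal elementary abelian subgroups of $S_4$, hence equal to the unique Klein four-group $V_4$, and $A_4$ indeed has no normal subgroup of order two. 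What your argument buys is uniformity — a single structural contradiction with no case analysis and no explicit element computation, and as a by-product it would also rule out the degenerate possibility $\tilde G = G$ — at the price of having to justify the faithfulness of the action on the quotient, which the paper uses only implicitly elsewhere; the paper's argument, by contrast, needs only one of the two ineffectivity groups and the concrete presentation $\tilde G = N \rtimes A_4$.
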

\begin{proof}
It is sufficient to exclude the case $A=N$. We argue by contradiction and assume $A=N$. The commutator subgroup of $S_4$ is $A_4$ and therefore $A_4 < G' < \tilde G < G$. Since $\tilde G$ is known to be of order 96, it follows that $\tilde G = N \rtimes A_4$. We consider the isotropy group $I_1 \cong C_2 \times C_2 < \tilde G$ and its intersection with $A=N$.

If $A\cap I_1 = \{ \mathrm{id} \}$, then $A =N \cong C_2 ^3$ acts effectively on $\pi_1 (Y) =\mathbb P_1$, a contradiction. If $A \cap I_1 = I_1$, i.e., if $I_1$ is contained in $A$, then the quotient group $\tilde G / I_1$, which acts effectively on $\mathbb P_1$ by definition, is isomorphic to $C_2 \rtimes A_4 \cong C_2 \times A_4$. This group does however not admit an effective action on $\mathbb P_1$.

It remains to consider the case $A \cap I_1 \cong C_2$. 
The only nontrivial normal subgroup of $A_4$ is isomorphic to $C_2 \times C_2$. Since $I_1 \cong C_2 \times C_2$ is not contained in $A_4$ in the case under consideration, it follows that $I_1 \cap A_4 = \{ \mathrm{id}\}$.
Written as a subgroup of the semi-direct product $\tilde G = N \rtimes A_4$ the group $I_1 \cong C_2 \times C_2$ is of the form
$I_1 = \{ (\mathrm{id},\mathrm{id} ), (\sigma_1,\mathrm{id}), (\sigma_2, a), (\sigma_3, a)\}$
for $\sigma_i \in N$ and $a \in A_4$.
Using the fact that $I_1$ is a normal subgroup of $\tilde G$ one finds that $g a g^{-1} = a$ for each $g \in A_4 < \tilde G$, a contradiction.
\end{proof}
By the lemma above, the set $N\backslash A$ consists of four elements. These are involutions in $G$ not respecting the product structure of $Y \cong \mathbb P_1 \times \mathbb P_1$. In particular, each element of $N\backslash A$ exchanges the factors of $Y$.
Since both $N$ and $A$ are normal subgroup of $G$, it follows that $G$ acts on $N$, $A$, and also $N\backslash A$ by conjugation.
We consider an element $\lambda$ of order three contained in $A_4$, the commutator of the chosen copy of $S_4$ inside $G$. In particular, 
$\lambda \in A_4 = S_4' < G' < \tilde G < G$.
The action of $\lambda$ on the set $N\backslash A$ has at least one fixed point, i.e., there exists an element $\sigma \in N\backslash A$ such that $\lambda \sigma \lambda^{-1} = \sigma$.

After a suitable change of coordinates, the action of $\sigma$ on $Y$ is of the form $\sigma (z,w) = (w,z)$. Since $\lambda \in \tilde G$ respects the product structure and commutes with $\sigma$, it follows that
$\lambda (z,w) = (\lambda_z z, \lambda _z w)$.
for some $\lambda_z \in \mathrm{PSL}_2(\mathbb C)$ of order three. We choose homogeneous coordinates on $\pi_i(Y) \cong \mathbb P_1$ such that
$\lambda (z,w) = \lambda ([z_0: z_1], [w_0: w_1]) = ([ \xi z_0: z_1], [\xi w_0: w_1])$,
where $\xi$ is some nontrivial third root of unity. Note that this choice of coordinates does not effect the chosen shape of $\sigma$.

The branch curve $B$ of the covering $\pi: X \to Y$ is given by a $G$-(semi)-invariant polynomial $f$ of bidegree $(4,4)$. Since $\lambda$ is contained in the commutator of $G$, the polynomial $f$ is invariant with respect to the induced action of $\lambda$. In particular, it is a linear combination of $\lambda$-invariant monomials of bidegree $(4,4)$. These are
\[
z_0^4 w_0^2 w_1^2, \, \,
z_0^3 z_1 w_0^3 w_1,\, \,
z_0^3 z_1 w_1^4,\, \,
z_0^2 z_1^2 w_0^4,\, \,
z_0^2 z_1^2 w_0 w_1^3,\, \,
z_0 z_1^3 w_0^2 w_1^2,\, \,
z_1^4 w_0^3 w_1,\, \,
z_1^4 w_1^4.
\]
It follows that three of the four $\lambda$-fixed points on $Y$, namely 
$([1:0],[1:0])$, $([0:1][1:0])$, and  $([1:0],[0:1])$ lie on $B$ and the action of $\lambda$ on the double covering $X$ has at most five fixed points. However by Remark \ref{order of symp aut}, a symplectic action of $C_3$ on the K3-surface $X$ has precisely six fixed points. This yields a contradiction and we conclude $Y \not \cong \mathbb P_1 \times \mathbb P_1$.

As we have obtained contradictions for all possible choices of Del Pezzo surfaces $Y$ the non-existence claimed in Proposition \ref{192} follows. 
%
%
%\subsection{The group $N_{72} = C_3^2 \rtimes D_8$}
\subsection{The group \texorpdfstring{$N_{72} = C_3^2 \rtimes D_8$}{N72}}
We let $X$ be a K3-surface with a symplectic action of $G=N_{72}$ centralized by an antisymplectic involution $\sigma$ with $\mathrm{Fix}_X(\sigma) \neq \emptyset$.
Note that in this case we may not apply Theorem \ref{roughclassi} and therefore begin by excluding that a $G$-minimal model of $Y=X/\sigma$ is an equivariant conic bundle. Then, after limiting the possible degrees of $G$-minimal models $Y_\mathrm{min}$, we study the possibility of rational branch curves.
\begin{lemma}\label{N72notconicbundle}
Any $G$-minimal model of $Y$ is a Del Pezzo surface.
\end{lemma}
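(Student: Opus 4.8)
The plan is to mimic the proof of Lemma~\ref{conicbundle}, but push harder because $|N_{72}| = 72 < 96$ and the crude bound from that lemma is not enough by itself; the point is that $N_{72}$ simply does not fit into the list of groups that can act on an equivariant conic bundle. First I would suppose, for contradiction, that a $G$-minimal model $Y_\mathrm{min}$ of $Y$ carries an equivariant conic bundle structure $\varphi\colon Y_\mathrm{min}\to \mathbb P_1$, and examine the induced $G$-action on the base $\mathbb P_1$. Write $N\lhd G$ for the kernel of this action. As in the proof of Lemma~\ref{conicbundle}, any nontrivial element of $N$ fixes the two points of a general fibre, producing a one-dimensional fixed locus downstairs, hence (since symplectic automorphisms on $X$ have only isolated fixed points) $N$ acts on the relevant curves exactly as $\sigma$ does; in particular every nontrivial element of $N$ has order two and $N$ acts faithfully on a general fibre $\cong\mathbb P_1$, so $N\cong C_2$ or $C_2\times C_2$.

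Next I would identify the quotient $G/N$, which acts effectively on $\mathbb P_1$ and is therefore cyclic, dihedral, or one of $T_{12}, O_{24}, I_{60}$; since symplectic automorphisms of $X$ have order at most $8$ (Remark~\ref{order of symp aut}), the cyclic/dihedral possibilities are bounded by $C_8$ and $D_{16}$. The key arithmetic constraint is $|G| = 72 = |N|\cdot|G/N|$ with $|N|\in\{2,4\}$, so $|G/N|\in\{18,36\}$. Now I would rule these out case by case: $O_{24}$, $T_{12}$, $I_{60}$ have orders $24$, $12$, $60$, none of which is $18$ or $36$; a cyclic group of order $18$ or $36$ is excluded since it would contain an element of order $>8$; and a dihedral group $D_{2m}$ of order $18$ or $36$ has $m=9$ or $m=18$, again containing a rotation of order $>8$. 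Thus no $G/N$ of the required order exists, giving the contradiction. I should double-check whether the structure $N_{72}=C_3^2\rtimes D_8$ needs to be invoked directly: it might be cleaner to note that $N_{72}$ contains $C_3\times C_3$, and argue that a normal $2$-subgroup $N$ of order $\le 4$ together with $(G/N)$ of order $18$ or $36$ cannot accommodate $C_3\times C_3$ — but the order bound already suffices, so I would keep the argument purely numerical.

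The step I expect to be the main obstacle is the bookkeeping in the case $N\cong C_2\times C_2$ with $|G/N|=18$: one must be careful that $G/N$ is genuinely a quotient acting \emph{effectively} on $\mathbb P_1$ and hence lies in Klein's list, and that $18$ is not the order of any such group — a cyclic $C_{18}$ or dihedral $D_{18}$ are the only abstract groups of order $18$ that could conceivably act, and both are killed by the order-$8$ bound, while $18\neq 12,24,60$ handles the exceptional cases. Once every arithmetic possibility is eliminated, the assumed conic bundle structure cannot exist, so by Proposition~\ref{Gmmp} any $G$-minimal model of the rational surface $Y$ must be a Del Pezzo surface, which is the assertion of the lemma.
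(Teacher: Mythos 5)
Your overall strategy is sound and does differ in flavour from the paper's proof. The paper disposes of the conic bundle case in one line: after showing (exactly as you do, following Lemma~\ref{conicbundle}) that the ineffectivity $I$ of the action on the base is trivial, $C_2$, or $C_2\times C_2$, it observes that in every case the quotient $G/I$ still contains $C_3\times C_3$, and no finite subgroup of $\mathrm{Aut}(\mathbb P_1)$ (cyclic, dihedral, $T_{12}$, $O_{24}$, $I_{60}$) contains $C_3\times C_3$. Your purely numerical elimination — $|G/N|\in\{18,36\}$ is not $12$, $24$, or $60$, and cyclic or dihedral groups of these orders contain elements of order $>8$, contradicting Remark~\ref{order of symp aut} — is also valid, at the cost of invoking the order-$8$ bound and a short case analysis; the paper's $C_3\times C_3$ observation (which you yourself mention as the alternative) is more uniform, since a subgroup of order $9$ automatically injects into the quotient by any $2$-group and the conclusion needs no bound on element orders.

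There is one small but genuine omission you should patch: your case list only covers $|N|\in\{2,4\}$, i.e.\ you implicitly assume the kernel $N$ of the action on the base is nontrivial. The fixed-point argument borrowed from Lemma~\ref{conicbundle} constrains $N$ only when it has nontrivial elements; the possibility $N=\{\mathrm{id}\}$, meaning $G=N_{72}$ acts effectively on $\mathbb P_1$, must be excluded separately. Fortunately the same numerics do it: $72\notin\{12,24,60\}$, and a cyclic or dihedral group of order $72$ contains an element of order $72$ or $36$, exceeding the order bound $8$ (or, following the paper, $N_{72}\supset C_3\times C_3$ cannot act effectively on $\mathbb P_1$). With that case added, your proof is complete.
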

\begin{proof}
Assume the contrary and let $Y_\mathrm{min}$ be an equivariant conic bundle and a $G$-minimal model of $Y$.  We consider the induced action of $G$ on the base $\mathbb P_1$
and denote by $I \lhd G$ the ineffectivity of this action. Arguing as in the proof of Lemma \ref{conicbundle}, we see that $I$ is trivial or isomorphic to either $C_2$ or $C_2 \times C_2$. In all cases the quotient $G/I$ contains the subgroup $C_3 \times C_3$, which has no effective action on $\mathbb{P}_1$.
\end{proof}
\begin{lemma}
Let $Y_\mathrm{min}$ be a Del Pezzo surface and a $G$-minimal model of $Y$. Then $\mathrm{deg}(Y_\mathrm{min}) \leq 4$.
\end{lemma}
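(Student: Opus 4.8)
The plan is to go through the list of Del Pezzo surfaces of degree $5 \leq d \leq 9$ and exclude each one, exactly in the spirit of the analysis carried out for $L_2(7)$, $A_6$, $S_5$, and the $192$-groups. The main tools are: the classification of $G$-minimal Del Pezzo surfaces, the bound on the order of the stabilizer of a $(-1)$-curve from Remark \ref{stab of minus one curve} (at most $12$), the structure of $\mathrm{Aut}(Y)$ for each $Y$, and the fact that $N_{72}$ contains $C_3 \times C_3$ (which in particular forbids effective actions on $\mathbb P_1$ and on genus-$0$ and genus-$1$ curves, via Remark \ref{order of symp aut} and the shape of $\mathrm{Aut}$ of an elliptic curve). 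I would also use adjunction on $Y$: the branch curve $B \sim -2K_Y$ has genus $g(B) = d+1$, and by Hurwitz $|\mathrm{Aut}(B)| \le 84(g(B)-1)$, which since $|N_{72}| = 72$ is not restrictive here but the quotient-curve argument below is.

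First I would dispose of the cases where $Y$ carries many $(-1)$-curves. For $d = 5, 6, 7$ a Del Pezzo surface has $10, 6, 3$ exceptional curves respectively; since $\mathrm{Stab}_G(E)$ has order $\le 12$, every $G$-orbit of $(-1)$-curves has length $\ge 6$, and one checks that the set of $(-1)$-curves cannot be a union of $G$-orbits in these degrees (for $d=7$ the surface is anyway not $G$-minimal, and $d=6,5$ fail on orbit-length grounds as for $S_5$). Next, $d = 9$, i.e. $Y \cong \mathbb P_2$: then $N_{72} \hookrightarrow \mathrm{PGL}_3(\mathbb C)$, but by the classification of finite subgroups of $\mathrm{PGL}_3(\mathbb C)$ (equivalently, $N_{72}$ would need a projective or linear $3$-dimensional representation), and since the $3$-Sylow of $N_{72}$ is $C_3 \times C_3$ while the $3$-part of any finite subgroup of $\mathrm{PGL}_3(\mathbb C)$ of Hessian/normalizer type is too small or the relevant representation dimensions of $N_{72}$ don't include $3$ — this is a contradiction. (Alternatively, and perhaps cleaner: $Y \cong \mathbb P_2$ would force $B$ to be a smooth $N_{72}$-invariant sextic of genus $10$, and one analyzes the quotient of $B$ by a suitable normal subgroup.) The case $d = 8$ with $Y \cong \mathbb P_1 \times \mathbb P_1$ is excluded because, as in the earlier arguments, one of the canonical projections is equivariant for a subgroup of index $\le 2$, which combined with Lemma \ref{conicbundle} (or directly with the nonexistence of an effective $C_3 \times C_3$-action on $\mathbb P_1$ together with control of the ineffectivity) gives a contradiction; the case $Y \cong \mathbb F_1$ is not $G$-minimal.

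The step I expect to be the main obstacle is $Y \cong \mathbb P_2$ (degree $9$) — and possibly degree $8$ — because there the $(-1)$-curve counting argument is unavailable and one must instead invoke either the classification of finite subgroups of $\mathrm{PGL}_3(\mathbb C)$ (Blichfeldt) or run a careful quotient-curve / Riemann–Hurwitz computation on the invariant sextic $B$, using that $N_{72} = C_3^2 \rtimes D_8$ and factoring $B$ by the normal subgroup $C_3^2$: the quotient $Q = B/C_3^2$ carries a $D_8$-action, and comparing $e(B) = 2 - 2g(B) = -18$ with $9\,e(Q) - (\text{branch contributions})$, together with the cyclicity of isotropy groups of $C_3^2$ (forcing isotropy of order $3$ and hence branch contributions divisible by $6$), should yield the contradiction. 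I would organize the proof as a short case-by-case over $d \in \{5,6,7,8,9\}$, leaning on Remark \ref{stab of minus one curve} for $d \in \{5,6,7\}$, on Lemma \ref{conicbundle} and the $C_3\times C_3$ obstruction for $d=8$, and on the subgroup structure of $\mathrm{PGL}_3(\mathbb C)$ (or the sextic quotient argument) for $d=9$, concluding $\mathrm{deg}(Y_\mathrm{min}) \le 4$.
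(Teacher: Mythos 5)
Your overall strategy (case-by-case over $\mathrm{deg}(Y_\mathrm{min})\in\{5,\dots,9\}$, using stabilizers of $(-1)$-curves, the automorphism groups of the surfaces, and the $C_3\times C_3$ obstruction to actions on $\mathbb P_1$) is the same as the paper's, and your treatment of $d=5$ (also immediate from $\mathrm{Aut}=S_5$ and $72\nmid 120$), $d=7$, and $d=8$ is fine. However, two of your cases have genuine gaps. For $d=6$ the claim that it ``fails on orbit-length grounds'' is not correct: a degree-$6$ Del Pezzo surface has exactly six $(-1)$-curves, and since stabilizers of $(-1)$-curves may have order up to $12$, a single $G$-orbit of length $6$ with stabilizer $D_{12}$ is not excluded by counting. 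Moreover $D_{12}\cong D_6\times C_2$ \emph{does} occur as a subgroup of $N_{72}=C_3^2\rtimes D_8$ (take a $C_3<C_3^2$, its order-two centralizer in $D_8$, and the central element of $D_8$, which inverts it), so Remark \ref{stab of minus one curve} gives no contradiction. The paper instead uses the structure $\mathrm{Aut}=( \mathbb C^*)^2\rtimes(S_3\times C_2)$ and analyzes $A=N_{72}\cap(\mathbb C^*)^2$, exploiting that the centralizer in $D_8$ of any $C_3<C_3^2$ is only $C_2$ and that no $C_2$ in $N_{72}$ centralizes all of $C_3^2$; some argument of this finer kind is needed.

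For $d=9$ your fallback Riemann--Hurwitz computation does not produce a contradiction: with $N=C_3^2$ all isotropy groups on $B$ are $C_3$, each branch point contributes $9-3=6$, and $-18=9\cdot 2-36$ is perfectly consistent with $Q=B/N\cong\mathbb P_1$ and six branch points (a $D_8$-invariant configuration of six points on $\mathbb P_1$ also exists), so divisibility by $6$ rules out nothing. Your primary argument via finite subgroups of $\mathrm{PGL}_3(\mathbb C)$ is the right idea but is misstated: the relevant obstruction is not the ``$3$-part'' being too small, nor merely that $N_{72}$ has no $3$-dimensional linear representation (one must also exclude $3$-dimensional representations of a degree-three central extension, since a $\mathbb P_2$-action only lifts projectively). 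What actually kills this case is that for either embedding type of $C_3^2$ in $\mathrm{PGL}_3(\mathbb C)$ the quotient normalizer-modulo-centralizer is $S_3$ (diagonal type) or $SL_2(\mathbb F_3)$ (Hessian type), and neither contains $D_8$ acting faithfully on $C_3^2$ --- this is exactly the explicit computation the paper performs (deferred to \cite{kf}, Section 4.8). As written, your $d=6$ and $d=9$ cases are therefore not proved.
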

\begin{proof}
We exclude all $G$-minimal Del Pezzo surfaces of degree at least five:

A Del Pezzo surface of degree five has automorphism group $S_5$ (cf.\ Theorem 10.2.2 in \cite{dolgachev}). Since $N_{72} \nless S_5$, it follows that $\mathrm{deg}(Y_\mathrm{min}) \neq 5$

The automorphism group of a Del Pezzo surface of degree six is $(\mathbb C^* )^2 \rtimes (S_3 \times C_2)$ (cf.\ Theorem 10.2.1 in \cite{dolgachev}). Assume that $N_{72} = C_3^2 \rtimes D_8$ is contained in this group and consider the intersection $ A = N_{72} \cap (\mathbb C^* )^2$. The quotient of $N_{72}$ by $A$ is a subgroup of $S_3 \times C_2$ and may not contain a copy of $C_3^2$. Therefore, the order of $A$ is at least six and $A$ contains a copy of $C_3$. If $|A| =6$, then $A = C_6 = C_3 \times C_2$ and $C_2$ is central in $N_{72}$. Using explicitly the group structure of $N_{72}$ one finds however that there is no copy of $C_2$ in $N_{72}$ centralizing $C_3 \times C_3$.
If $|A|  >6$, then the centralizer of $C_3$ in $D_8$ has order greater than 2. This is contrary to the fact that for every choice of $C_3$ inside $C_3 \times C_3$ the centralizer inside $D_8$ is isomorphic to $C_2$. It follows that $\mathrm{deg}(Y_\mathrm{min}) \neq 6$.

If $G$ acts on $\mathbb P_1 \times \mathbb P_1$, then the canonical projections are equivariant with respect to a subgroup $H$ of index two in $G$. We consider one of these projections. The action of $H$ induces an effective action of $H/I$ on the base $\mathbb P_1$, where the group $I$ is either trivial or isomorphic to $C_2$ or $C_2 \times C_2$. In all case we find an effective action of $C_3 ^2$ on the base, a contradiction.  

It remains to exclude $\mathbb P_2$.
If $N_{72}$ acts on $\mathbb P_2$ we consider its embedding into $\mathrm{PSL}_3(\mathbb C)$, in particular the realization of the subgroup $C_3^2= \langle a \rangle \times \langle b \rangle$ 
and its lifting to $\mathrm{SL}_3(\mathbb C)$. 
One uses explicit realizations of the generators $a$ and $b$ in appropriately chosen coordinates and checks that the action of $D_8$ on $C_3^2$ cannot be realized in $\mathrm{PSL}_3(\mathbb C)$. The calculation is omitted here and the reader is referred to Section 4.8 in \cite{kf} for details.
It follows that there is no action of $N_{72}$ on $\mathbb P_2$.
\end{proof}
\begin{lemma}\label{noratN72}
There are no rational curves in $\mathrm{Fix}_X(\sigma)$.
\end{lemma}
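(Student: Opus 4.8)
The plan is to argue by contradiction, adapting the proof of the proposition that excludes rational branch curves when $|G|>108$, but exploiting two features special to $N_{72}$: the inequality $e(Y_\mathrm{min})\geq 8$ coming from $\deg(Y_\mathrm{min})\leq 4$, and the very short list of subgroups of $N_{72}$ that can act effectively on $\mathbb P_1$. So suppose $\mathrm{Fix}_X(\sigma)$ contains a rational curve; then the set $\mathcal C$ of rational branch curves of $\pi\colon X\to Y$ is nonempty, and by \eqref{atmostten} its cardinality $n=|\mathcal C|$ satisfies $1\leq n\leq 10$. By the two preceding lemmas any $G$-minimal model $Y_\mathrm{min}$ of $Y$ is a Del Pezzo surface of degree at most four, hence $e(Y_\mathrm{min})=12-\deg(Y_\mathrm{min})\geq 8$, so \eqref{moribound} sharpens to $m\leq n+4$ and \eqref{boundforn} to $\tfrac N2\,n\leq m\leq n+4$, where $m=|\mathcal E|$ is the number of Mori fibers and $N$ is the largest integer with $\mathcal C_{\geq N}=\mathcal C$ (with $N\geq 1$, since every rational branch curve is a $(-4)$-curve on $Y$ and must therefore be modified by the Mori reduction). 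I would then fix a minimizing curve $C\in\mathcal C$ and set $H=\mathrm{Stab}_G(C)$; since a symplectic automorphism has only isolated fixed points, $H$ acts effectively on $C\cong\mathbb P_1$, and its orbit $G.C$ has length $[G:H]\leq n$.

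Next I would pin down $H$. An effectively acting finite subgroup of $\mathrm{Aut}(\mathbb P_1)$ is cyclic, dihedral, or one of $T_{12},O_{24},I_{60}$. The group $N_{72}=C_3^2\rtimes D_8$ contains none of the last three: $|A_5|=60$ does not divide $72$, and a copy of $A_4$ or $S_4$ would contain all four of its Sylow $3$-subgroups inside the \emph{normal} Sylow $3$-subgroup $C_3^2$, hence would meet $C_3^2$ in a subgroup containing two distinct subgroups of order three, that is, in all of $C_3^2$, forcing $9\mid 12$ resp.\ $9\mid 24$, which is absurd. Moreover, by Remark \ref{order of symp aut} together with the Sylow structure of $N_{72}$ (Sylow $2$-subgroup $D_8$, Sylow $3$-subgroup $C_3^2$), every element of $N_{72}$ has order at most $6$; in particular $N_{72}$ has no subgroup $C_8$, and its dihedral subgroups have order at most $12$. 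Hence $|H|\leq 12$, so from $[G:H]\leq n\leq 10$ we get $|H|=72/[G:H]\geq 8$; as $|H|$ divides $72$ this leaves $|H|\in\{8,12\}$, and therefore $H\cong D_8$ with $[G:H]=9$, or $H\cong D_{12}$ with $[G:H]=6$ (the remaining groups of orders $8$, $9$, $12$ either fail to act effectively on $\mathbb P_1$ or do not embed in $N_{72}$); in either case $n\geq 6$. Now the $N$ points of $C\cap\bigcup E_i$ form an $H$-invariant subset of $C\cong\mathbb P_1$, and neither $D_8$ nor $D_{12}$ has an invariant subset of $\mathbb P_1$ of size $1$ or $3$ (their orbits have size $2$, then $4$ resp.\ $6$, then larger), so $N=2$ or $N\geq 4$; but $N\geq 4$ would give $2n\leq\tfrac N2\,n\leq n+4$, i.e.\ $n\leq 4<6$. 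Hence $N=2$.

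It remains to treat $N=2$, which I expect to be the heart of the argument. Here every curve of $\mathcal C$ meets $\bigcup E_i$ in exactly two points, forming the two-point orbit of the cyclic subgroup $C_m\lhd H$ ($m\in\{4,6\}$), and I would re-run the case analysis from the proof of the $|G|>108$ proposition in the dihedral, $N=2$ case: for a given rational branch curve, either a single Mori fiber meets it in both points — then that Mori fiber meets no other branch curve (Lemma \ref{at most two}) and, by Remark \ref{selfintblowdown}, contracting it turns the curve into a \emph{singular} curve of self-intersection $0$, which a Del Pezzo surface does not carry, so a further Mori fiber is forced — or it meets two distinct Mori fibers in one point each, whose contraction turns it into a $(-2)$-curve, again impossible on a Del Pezzo, so again further Mori fibers are forced; using Lemma \ref{at most two} and $G$-invariance one checks that the Mori fibers so produced are private to the curves of the orbit, whence $m\geq 2\,[G:H]$. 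If $H\cong D_8$ this gives $m\geq 18>n+4\geq m$ (since $n\leq 10$), a contradiction. If $H\cong D_{12}$ it gives $m\geq 12$, so $m\leq n+4$ forces $n\geq 8$; but $G.C$ already accounts for $6$ curves, so $\mathcal C$ contains a further $G$-orbit of rational branch curves of size dividing $72$ and at most $n-6\leq 4$, hence of size $1$, $2$, $3$ or $4$, whose stabilizer would have order $72$, $36$, $24$ or $18$ and act effectively on $\mathbb P_1$ — impossible for each of these orders, none of which occurs among the effectively acting finite subgroups of $\mathrm{Aut}(\mathbb P_1)$ that embed in $N_{72}$. All cases being contradictory, $\mathrm{Fix}_X(\sigma)$ contains no rational curve. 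The main obstacle is precisely this last step: transferring the Mori-fiber bookkeeping of the $|G|>108$ proposition to the present setting and combining it, together with the arithmetic of $|G|=72$ and the sharp Euler bound, to a contradiction.
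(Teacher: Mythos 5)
Your proof is correct and follows essentially the same route as the paper: the sharpened Euler bound $m\le n+4$ coming from $\deg(Y_\mathrm{min})\le 4$, the effective action of the stabilizer of a rational branch curve on $\mathbb P_1$ together with the exclusion of $S_4$ (and $A_4$) from $N_{72}$ to force $n\ge 6$, and a Mori-fiber count showing the reduction would need roughly twice as many fibers as there are rational branch curves. The paper's endgame is a bit more economical -- it notes every rational branch curve has non-cyclic stabilizer, hence meets the Mori fibers in at least two points, asserts $m\ge 2n$ ``by invariance'' (the same privacy claim you make for the orbit of the minimizing curve, at the same level of detail), and concludes $n\le 4$ directly -- whereas you pin down $H\in\{D_8,D_{12}\}$ and $N=2$ and need the extra second-orbit stabilizer argument in the $D_{12}$ case; both versions rest on the same key counting step.
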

\begin{proof}
 Let $n$ denote the total number of rational curves in $\mathrm{Fix}_X(\sigma)$ and recall $n \leq 10$. If $n \neq 0$, let $C$ be a rational curve in the image of  $\mathrm{Fix}_X(\sigma)$ in $Y$ and let $H = \mathrm{Stab}_G(C)$ be its stabilizer. 
Its order is at least eight and its action on $C$ is effective. 
First note that $G$ does not contain $S_4 = O_{24}$ as a subgroup. If this were the case, consider the intersection $S_4 \cap C_3^2$ and the quotient $S_4 \to S_4 / (S_4 \cap C_3^2) < D_8$. Since the only nontrivial normal subgroups of $S_4$ are $A_4$ and $C_2 \times C_2$, this leads to a contradiction.
Consequently, the order of $H$ is at most twelve. In particular, $n \geq 6$. We will obtain a contradiction by showing $n \leq 4$.

Since $| H| \geq 8$ and $C_8 \nless G$, the group $H$ is not cyclic and any $H$-orbit on $C$ consists of at least two points.
It follows from $C^2 = -4$ that $C$ must meet the union of Mori fibers and the union of Mori fibers meets the curve $C$ in at least two points. Recalling that each Mori fibers meets the branch locus $B$ in at most two points we see that at least $n$ Mori fibers meeting $B$ are required. However, no configuration of $n$ Mori fibers is sufficient to transform the curve $C$ into a curve on a Del Pezzo surface and further Mori fibers are required. By invariance, the total number $m$ of Mori fibers must be at least $2n$.
Combining the Euler-characteristic formula \eqref{eulerchar}
with our observation $\mathrm{deg}(Y_\mathrm{min}) \leq 4$, i.e., $e(Y_\mathrm{min}) \geq 8$ we see that $n \leq 4$.
\end{proof}
\begin{proposition}
Let $X$ be a K3-surface with a symplectic action of $G=N_{72}$ centralized by an antisymplectic involution $\sigma$ with $\mathrm{Fix}_X(\sigma) \neq \emptyset$.
 Then the quotient surface $Y = X / \sigma $ is $G$-minimal and isomorphic to the Fermat cubic $\{x_1^3 + x_2^3 +x_3^3 +x_4^3 =0\} \subset \mathbb P_3$. Up to equivalence, there is a unique action of $G$ on $Y$ and the branch locus of $X \to Y$ is given by $\{x_1x_2 + x_3x_4 =0\}$. In particular, $X$ is equivariantly isomorphic to Mukai's $N_{72}$-example.
\end{proposition}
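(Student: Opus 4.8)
The plan is first to establish that the quotient surface $Y=X/\sigma$ is $G$-minimal. By Lemma~\ref{noratN72} there are no rational curves in $\mathrm{Fix}_X(\sigma)$, so $n=0$ and, by Nikulin's description, $\mathrm{Fix}_X(\sigma)$ is either a single smooth curve $B$ or a union of two disjoint elliptic curves. If $Y$ were not $G$-minimal, the Mori reduction would contract a $(-1)$-curve $E\subset Y$ whose stabilizer $\mathrm{Stab}_G(E)$ is cyclic or dihedral of order at most $12$ (Remark~\ref{stab of minus one curve}), so its $G$-orbit would contribute at least $72/12=6$ Mori fibers, forcing $m\ge 6$; but $\deg(Y_\mathrm{min})\le 4$ gives $e(Y_\mathrm{min})\ge 8$, and inequality~\eqref{moribound} with $n=0$ yields $m\le 12-e(Y_\mathrm{min})\le 4$, a contradiction. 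Hence $m=0$, $Y=Y_\mathrm{min}$ is a Del Pezzo surface of degree $d\le 4$, and --- since a Del Pezzo surface carries no two disjoint elliptic curves summing to $-2K_Y$ (by adjunction such $T_i$ would have $-K_Y\cdot T_i=T_i^2=0$) --- $B$ is a single smooth curve with $B\sim-2K_Y$ and $g(B)=d+1$; the Hurwitz bound $72\le 84(g(B)-1)$ then only gives $d\ge 1$.

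Next I would rule out $d\in\{1,2,4\}$, using that $N_{72}=C_3^2\rtimes D_8$ has trivial centre (a direct check: the $D_8$ acts on $C_3^2$ without nonzero fixed vectors). For $d=1$ the base point of $|-K_Y|$ is $\mathrm{Aut}(Y)$-fixed, so linearization embeds $N_{72}$ into $\mathrm{GL}_2(\mathbb C)$ and, after dividing by scalars (which meet $N_{72}$ trivially), into $\mathrm{PSL}_2(\mathbb C)$ --- impossible, as $N_{72}\supset C_3\times C_3$ is neither cyclic nor dihedral nor one of $A_4,S_4,A_5$. For $d=2$ one has $\mathrm{Aut}(Y)=\langle\gamma\rangle\times G'$ with $\gamma$ the Geiser involution and $G'\hookrightarrow\mathrm{PSL}_3(\mathbb C)$; since $\langle\gamma\rangle$ is central and meets $N_{72}$ trivially, $N_{72}$ would act on $\mathbb P_2$, contradicting the non-existence result proved above. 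For $d=4$ one has $\mathrm{Aut}(Y)=C_2^4\rtimes\Gamma$, $\Gamma\in\{C_2,C_4,S_3,D_{10}\}$ (cf.\ Section 10.2.2 in \cite{dolgachev}), and since $N_{72}\cap C_2^4\lhd N_{72}$ is a $2$-group, $C_3\times C_3$ would embed into $\Gamma$, which is absurd. So $d=3$, and $Y$ is a cubic surface with faithful $N_{72}$-action; by Dolgachev's classification of automorphism groups of cubic surfaces (\cite{dolgachev}, Chapter 10) the only such surface is the Fermat cubic $V=\{x_1^3+x_2^3+x_3^3+x_4^3=0\}\subset\mathbb P_3$, with $\mathrm{Aut}(V)=C_3^3\rtimes S_4$, and the $N_{72}$-action is unique up to equivalence since the subgroups of $\mathrm{Aut}(V)$ isomorphic to $N_{72}$ form a single conjugacy class.

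Finally I would pin down the branch curve. From $h^0(Y,\mathcal O(-2K_Y))=1+\frac12\cdot 2\cdot 3\cdot 3=10=\dim H^0(\mathbb P_3,\mathcal O(2))$ (cf.\ Lemma 8.3.1 in \cite{dolgachev}) and the fact that a cubic surface lies on no quadric, the restriction $H^0(\mathbb P_3,\mathcal O(2))\to H^0(Y,\mathcal O(-2K_Y))$ is an isomorphism, so $B=V\cap Q$ for a quadric $Q=\{q=0\}$; $G$-invariance of $B$ and irreducibility of $V$ then force $q$ to be an $N_{72}$-semi-invariant quadratic form, and an invariant-theory computation in $N_{72}$-adapted coordinates shows the space of such forms to be spanned by $x_1x_2+x_3x_4$. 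Thus $B=\{x_1^3+x_2^3+x_3^3+x_4^3=x_1x_2+x_3x_4=0\}$ and $X$ is the double cover of $V$ branched along $\{x_1x_2+x_3x_4=0\}$, which is the surface $\{x_1^3+x_2^3+x_3^3+x_4^3=x_1x_2+x_3x_4+x_5^2=0\}\subset\mathbb P_4$; this surface carries the antisymplectic involution $x_5\mapsto-x_5$ with exactly this quotient and branch locus, and since the given symplectic $N_{72}$ on $X$ descends injectively to the (unique up to conjugacy) $N_{72}$ on $Y=V$ and each element of $N_{72}$ has a unique symplectic lift, the Remark after Definition~\ref{equivariantequivalence} gives the equivariant isomorphism with Mukai's $N_{72}$-example.

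I expect the main obstacle to be the representation- and invariant-theoretic heart of the last two steps --- identifying the Fermat cubic (with its unique $N_{72}$-action) among all cubic surfaces, and checking that $x_1x_2+x_3x_4$ spans the semi-invariant quadrics --- which requires a concrete handle on the embedding $C_3^2\rtimes D_8\hookrightarrow C_3^3\rtimes S_4$ together with Dolgachev's tables; the exclusions of $d=1,2$ additionally depend on the earlier absence of an $N_{72}$-action on $\mathbb P_2$ and on the triviality of $Z(N_{72})$.
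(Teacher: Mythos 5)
Your proposal is correct and follows the same skeleton as the paper's proof: use the two preceding lemmata (no rational branch curves, Del Pezzo model of degree at most four) together with the Euler-characteristic bound and the order-at-most-12 bound on stabilizers of Mori fibers to force $m=0$, then identify $Y$ among Del Pezzo surfaces of degree $\leq 4$, pin down the invariant quadric cutting out $B$, and match with Mukai's example via the lifting remark. The differences are only in how the sub-steps are justified. For excluding degrees $1,2,4$ the paper reads the orders and central $C_3$'s off Dolgachev's tables, while you argue structurally from $Z(N_{72})=\{\mathrm{id}\}$ (linearization at the base point of $|-K_Y|$ for $d=1$, centrality of the Geiser involution for $d=2$, the quotient by $C_2^4$ for $d=4$); note that for $d=2$ you do not need the splitting $\mathrm{Aut}(Y)=\langle\gamma\rangle\times G'$ --- centrality of $\gamma$ plus triviality of the centre already gives $N_{72}\hookrightarrow\mathrm{PGL}_3(\mathbb C)$, which is what contradicts the earlier exclusion of $\mathbb P_2$. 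For the identification step the paper invokes the representation and invariant theory of $N_{72}$ (unique projective action on $\mathbb P_3$, unique invariant cubic and quadric, computations deferred to Appendix A.1 of \cite{kf}), whereas you go through Dolgachev's classification of automorphism groups of cubic surfaces (only the Fermat cubic has order divisible by $72$) plus the assertion that the $N_{72}$-subgroups of $C_3^3\rtimes S_4$ form a single conjugacy class and that the semi-invariant quadrics are spanned by $x_1x_2+x_3x_4$; these two assertions are exactly the finite computations you flag at the end, and they carry the same weight as the computations the paper itself defers, so there is no genuine gap --- just make sure they are actually carried out or cited.
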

\begin{proof}
Using the Euler-cha\-rac\-te\-ris\-tic formula \eqref{eulerchar} together with the two previous lemmata, we find that the total number $m$ of Mori fibers is bounded by four. Since the maximal order of a stabilizer group of a Mori fiber is twelve (cf.\ proof of Theorem \ref{roughclassi}) we conclude $m=0$, i.e., $Y$ must be $G$-minimal.

In order to conclude that $Y$ is the Fermat cubic we consult Dolgachev's lists of automorphisms groups of Del Pezzo surfaces of degree less than or equal to four (\cite{dolgachev} Section 10.2.2; Tables 10.3; 10,4; and 10.5):
It follows immediately from the order of $G$ that $Y$ is not of degree two or four. If $G$ were a subgroup of an automorphism group of a Del Pezzo surface of degree one, it would contain a central copy of $C_3$, which is not the case.
After excluding the cases $\mathrm{deg}(Y) \in \{1,2,4\}$
it remains to consider the case of a cubic hypersurface $Y$ in $\mathbb P_3$. 
The action of $G$ on $Y$ is induced by a four-dimensional (projective) representation of $G$ and the branch curve $C \subset Y$ is the intersection of $Y$ with an invariant quadric (compare proof of Proposition \ref{S5 on degree three}). It follows from the representation and invariant theory of the group $N_{72}$ that there is a unique action of $N_{72}$ on $\mathbb P_3$, a unique invariant cubic hypersurface, namely the Fermat cubic, and a unique invariant quadric hypersurface $\{x_1x_2 + x_3x_4 =0\}$. The necessary computations are carried out in Appendix A.1 in \cite{kf}.

Mukai's $N_{72}$-example is defined by $\{ x_1^3+ x_2 ^3 + x_3^3 +x_4^3= x_1x_2 + x_3x_4+ x_5^2 = 0 \} \subset \mathbb P_4$. An anti-symplectic involution centralizing the action of $N_{72}$ is given by the map $ x_5 \mapsto -x_5$. The quotient of Mukai's example by this involution is the Fermat cubic and the fixed point set of the involution is given by $\{x_1x_2 + x_3x_4= 0 \}$.
\end{proof}
%
%\subsection{The group $M_9 =C_3^2 \rtimes Q_8$}
\subsection{The group \texorpdfstring{$M_9 =C_3^2 \rtimes Q_8$}{M9}}
Let $G = M_9$ and let $X$ be a K3-surface with a symplectic $G$-action centralized by the antisymplectic involution $\sigma$ such that $\mathrm{Fix}_X(\sigma) \neq \emptyset$.
We proceed in analogy to the case $G=N_{72}$ above. Arguing precisely as in the proof of Lemma \ref{N72notconicbundle} one shows that any $G$-minimal model of $Y$ is a Del Pezzo surface. As a next step we exclude rational branch curves.
\begin{lemma}\label{subgroupsM9}
There are no rational curves in $\mathrm{Fix}_X(\sigma)$.
\end{lemma}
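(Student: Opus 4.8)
The plan is to mimic closely the proof of Lemma \ref{noratN72}, replacing $N_{72}$-specific subgroup information by the analogous facts for $M_9 = C_3^2 \rtimes Q_8$. Suppose on the contrary that $\mathrm{Fix}_X(\sigma)$ contains a rational curve; let $n \leq 10$ be the total number of rational curves in $\mathrm{Fix}_X(\sigma)$, let $C$ be (the image in $Y$ of) such a curve, and set $H = \mathrm{Stab}_G(C)$. Since a symplectic automorphism has no one-dimensional fixed locus, $H$ acts effectively on $C \cong \mathbb P_1$, so $H$ is one of $C_k$, $D_{2k}$, $T_{12}$, $O_{24}$, $I_{60}$; and the index $[G:H]$ is at most $n \leq 10$. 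As $|G| = 72$, this forces $|H| \geq 8$.

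First I would pin down which subgroups of $M_9$ of order $\geq 8$ can act effectively on $\mathbb P_1$. The group $M_9$ has order $72 = 2^3 \cdot 3^2$, so a subgroup of order $\geq 8$ has index $\leq 9$; combining with $[G:H] \leq 10$ gives $|H| \in \{8, 9, 12, \dots\}$, but $H$ must be on the list of rotation groups, so $H \in \{C_8, D_{16}, D_8, T_{12}, O_{24}, \dots\}$ intersected with "has order dividing $72$ and at most $\ldots$". Since $C_8 \nless M_9$ (the maximal order of a symplectic automorphism is eight, but more relevantly a $2$-Sylow of $M_9$ is $Q_8$, which has no element of order $8$ and no $C_8$ subgroup), $H$ is not cyclic of order $8$. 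Also $O_{24} = S_4 \nless M_9$: a copy of $S_4$ would meet the normal $C_3^2$ in a normal subgroup of $S_4$ contained in $C_3^2$, hence trivial, so $S_4$ would embed into $M_9/C_3^2 \cong Q_8$, which is absurd. And $T_{12} = A_4 \nless M_9$ for the same reason ($A_4$ has no nontrivial normal subgroup of order coprime to... actually $A_4$ meets $C_3^2$ in a normal subgroup of $A_4$ of order dividing $9$, so order $1$ or $3$; order $3$ is impossible since $A_4$ has no normal subgroup of order $3$; so $A_4$ embeds in $Q_8$, impossible). Dihedral subgroups: the maximal order of a dihedral group of symplectic automorphisms on a K3 surface is $12$ (Proposition 3.10 in \cite{mukai}), and a $D_{12}$ or $D_8$ inside $M_9$ — I would check whether these occur, but in any case $|H| \leq 12$. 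So $[G:H] \geq 6$, i.e. $n \geq 6$.

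Next I would derive the contradiction $n \leq 4$ exactly as in Lemma \ref{noratN72}. Since $|H| \geq 8$ and $C_8 \nless M_9$, the stabilizer $H$ is noncyclic, so every $H$-orbit on $C$ has at least two points; by invariance, every curve in the orbit $G.C$ (which consists of $n$ curves) is a $(-4)$-curve in $Y$ that must be modified by the Mori reduction, hence meets the union of Mori fibers, and in fact in at least two points. By Lemma \ref{at most two} each Mori fiber meets $B$ in at most two points, so transforming all $n$ branch curves requires at least $n$ Mori fibers meeting $B$; and, arguing as in Lemma \ref{noratN72}, a single layer of $n$ Mori fibers cannot turn a $(-4)$-curve into a curve on a Del Pezzo surface, so further Mori fibers are forced and, by invariance, $m \geq 2n$. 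Finally I would feed this into the Euler-characteristic formula \eqref{eulerchar}, $24 = 2e(Y_\mathrm{min}) + 2m - 2n + (2g-2)$, together with $\mathrm{deg}(Y_\mathrm{min}) \leq 4$ — which I still need to establish for $M_9$, by the same run through Dolgachev's tables and the $\mathbb P_1 \times \mathbb P_1$ / $\mathbb P_2$ arguments as for $N_{72}$ — hence $e(Y_\mathrm{min}) \geq 8$, to conclude $2m \leq 24 - 16 + 2n = 8 + 2n$, i.e. $m \leq n+4$; combined with $m \geq 2n$ this gives $n \leq 4$, contradicting $n \geq 6$.

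The main obstacle I anticipate is the group-theoretic bookkeeping for $M_9 = C_3^2 \rtimes Q_8$: verifying which dihedral and cyclic groups embed in $M_9$ (needed to bound $|H| \leq 12$ sharply enough to get $n \geq 6$), and re-establishing $\mathrm{deg}(Y_\mathrm{min}) \leq 4$ — in particular excluding $Y_\mathrm{min} \cong \mathbb P_2$, which for $N_{72}$ required an explicit computation in $\mathrm{SL}_3(\mathbb C)$ showing the $D_8$-action on $C_3^2$ cannot be realized, and the analogue here is ruling out the $Q_8$-action on $C_3^2$ inside $\mathrm{PSL}_3(\mathbb C)$. These are finite checks but they carry the weight of the argument; the intersection-theoretic "$m \geq 2n$" step is then a routine repetition of the $N_{72}$ reasoning.
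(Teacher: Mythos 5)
There is a genuine gap, and it sits exactly where you anticipated trouble: your contradiction rests on establishing $\mathrm{deg}(Y_\mathrm{min}) \leq 4$, i.e.\ $e(Y_\mathrm{min}) \geq 8$, by excluding in particular $Y_\mathrm{min} \cong \mathbb P_2$ via the non-realizability of the $Q_8$-action on $C_3^2$ inside $\mathrm{PSL}_3(\mathbb C)$. That exclusion is false for $M_9$: unlike $N_{72}$, the group $M_9 = C_3^2 \rtimes Q_8$ does embed in $\mathrm{PSL}_3(\mathbb C)$ (it sits inside the Hessian group of order 216), and indeed Mukai's $M_9$-example is a double cover of $\mathbb P_2$ branched along an $M_9$-invariant sextic; the paper's subsequent proposition shows that in the end $Y \cong \mathbb P_2$. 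So the analogue of the $N_{72}$ degree bound is not available, the estimate $m \leq n+4$ from the Euler-characteristic formula \eqref{eulerchar} cannot be derived, and the contradiction with $n \geq 6$ evaporates. Your Mori-fiber step ($m \geq 2n$) is fine as a repetition of Lemma \ref{noratN72}, but it has nothing to collide with.

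The remedy is to push the group theory that you deferred, and then no geometry is needed at all — this is what the paper does. Every candidate stabilizer $H$ acting effectively on $\mathbb P_1$ can be tested against the extension $C_3^2 \lhd M_9$ with quotient $Q_8$: for $S_4$, $A_4$, $D_8$, $C_8$ the intersection with $C_3^2$ is a normal subgroup of $H$ lying in a $3$-group, hence trivial, so $H$ would embed in $Q_8$, which is impossible; for $D_{12}$ the intersection is trivial or $C_3$, and in either case one gets $D_{12}$ or $C_2 \times C_2$ inside $Q_8$, again impossible (recall $Q_8$ has a unique involution); and $A_5$, $D_{10}$, $D_{14}$, $D_{16}$ are excluded outright. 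Hence every subgroup of $M_9$ with an effective action on a rational curve has order at most $6$, so $[G:H] \geq 12$ and the orbit $G.C$ already forces $n \geq 12$, contradicting Zhang's bound \eqref{atmostten} $n \leq 10$ directly. In particular your preliminary deduction $|H| \geq 8$ from $n \leq 10$ is itself the contradiction once the subgroup check is completed; had you carried out the $D_8$/$D_{12}$ verification instead of only concluding $|H| \leq 12$, the proof would have closed without any appeal to $Y_\mathrm{min}$.
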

\begin{proof}
Let $n$ be the total number of rational curves in $\mathrm{Fix}_X(\sigma)$. Assume $n \neq 0$, let $C$ be a rational curve in the image of $\mathrm{Fix}_X(\sigma)$ in $Y$ and let $H <G$ be its stabilizer. The action of $H$ on $C$ is effective. We go through the list of finite groups with an effective action on a rational curve. 
Since $M_9$ is a group of symplectic transformations on a K3-surface, its element have order at most eight.
Clearly, $A_5, \, D_{10}, \, D_{14}, \, D_{16} \nless M_9$. If $S_4 < M_9 = C_3^2 \rtimes Q_8$, then $S_4 \cap C_3^2$ is a normal subgroup of $S_4$ and it is therefore trivial. Now $ S_4 = S_4 / (S_4 \cap C_3^2) <  M_9 / C_3^2 =  Q_8$ yields a contradiction. The same argument can be carried out for $A_4$, $D_8$ and $C_8$.  If $D_{12} < M_9 = C_3^2 \rtimes Q_8$, then either $D_{12} \cap C_3^2 = C_3$ and $C_2 \times C_2 = D_{12} / C_3 < M_9 / C_3^2 =  Q_8$ or $D_{12} \cap C_3^2 = \{ \mathrm{id} \}$ and $D_{12} < Q_8$, both are impossible.
It follows that the subgroups of $M_9$ admitting an effective action on a rational curve have index greater than or equal to twelve. Therefore  $n \geq 12$, contrary to inequality \eqref{atmostten} stating $n \leq 10$.
\end{proof}
\begin{proposition}
 The quotient surface $Y$ is $G$-minimal and isomorphic to $\mathbb P_2$. Up to equivalence, there is a unique action of $G$ on $Y$ and the branch locus of $X \to Y$ is given by
$\{x_1^6 + x_2^6+ x_3^6-10( x_1^3x_2^3 + x_2^3x_3^3+ x_3^3x_1^3 ) =0\}$. In particular, $X$ is equivariantly isomorphic to Mukai's $M_9$-example.
\end{proposition}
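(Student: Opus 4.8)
\emph{Overall strategy.} The plan is to proceed exactly as in the $N_{72}$-case: first determine the $G$-minimal model $Y_\mathrm{min}$ of $Y = X/\sigma$, then show that the Mori reduction is trivial so that $Y$ is itself $G$-minimal, then read off the branch locus by invariant theory, and finally verify that the $M_9$-action lifts symplectically to the double cover. Two inputs are already available: $Y_\mathrm{min}$ is a Del Pezzo surface (as noted just before Lemma~\ref{subgroupsM9}) and $\mathrm{Fix}_X(\sigma)$ contains no rational curves (Lemma~\ref{subgroupsM9}). To these I would add the purely group-theoretic facts that $M_9 = C_3^2 \rtimes Q_8$ has no faithful two-dimensional representation, no abelian subgroup of order $>9$, and no subgroup of order $12$ or $24$ --- the last because such a subgroup would meet $C_3^2$ in a copy of $C_3$ invariant under an order-$4$ subgroup of $M_9/C_3^2 = Q_8 < \mathrm{SL}_2(\mathbb F_3)$, whereas the order-$4$ elements of $\mathrm{SL}_2(\mathbb F_3)$ act irreducibly on $\mathbb F_3^2$. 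By Remark~\ref{stab of minus one curve} the stabiliser in $M_9$ of any $(-1)$-curve has order at most $12$, hence index at least $6$.

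\emph{Identifying $Y_\mathrm{min}$.} Next I would run through Dolgachev's classification of automorphism groups of Del Pezzo surfaces (\cite{dolgachev}, Chapter~10). Degree one is excluded because $|-K|$ has an $\mathrm{Aut}$-fixed base point; degree two because $\mathrm{Aut}$ would act on the branch quartic, a curve of genus three, while $C_3^2$ cannot act on a genus-three curve by Riemann--Hurwitz; degrees three to six because $M_9$ embeds into none of the possible automorphism groups (for degree six one compares $M_9 \cap (\mathbb C^*)^2$ with the abelian normal subgroups of $M_9$, and for degree three one uses that the Sylow $2$-subgroup of $M_9$ is $Q_8$, not $D_8$); a degree-seven surface and $\mathbb F_1$ are never $G$-minimal; and $\mathbb P_1\times\mathbb P_1$ is ruled out since an index-two subgroup of $M_9$ would then map onto an effective group of automorphisms of $\mathbb P_1$ with kernel of order $\le 4$, impossible for a group containing $C_3^2$. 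Hence $Y_\mathrm{min} \cong \mathbb P_2$, and since $M_9$ is, up to conjugacy, the order-$72$ primitive subgroup of $\mathrm{PGL}_3(\mathbb C)$ sitting inside the Hessian group of order $216$ (\cite{blichfeldt}), the action of $M_9$ on $\mathbb P_2$ is unique up to equivalence.

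\emph{Excluding elliptic branch curves and Mori fibres.} By Nikulin's description of $\mathrm{Fix}_X(\sigma)$ together with Lemma~\ref{subgroupsM9}, $\mathrm{Fix}_X(\sigma)$ is either a single curve of genus $g\ge 1$ or a disjoint pair of elliptic curves. A single elliptic curve is impossible since $\mathrm{Aut}$ of an elliptic curve is $L\ltimes T$ with $|L|\le 6$ and $M_9$ has no abelian subgroup of order $>9$. For a pair $T_1\sqcup T_2$ one shows that $M_9$ must interchange them (again from the shape of $\mathrm{Aut}(T_i)$), so $T_i^2=0$, whence $|T_1|$ is an elliptic pencil preserved by $M_9$; since $M_9/C_3^2=Q_8$ is not a subgroup of $\mathrm{PGL}_2(\mathbb C)$, the central involution of that $Q_8$ acts fibrewise and restricts on $T_1$ to an inversion composed with a translation, hence is antisymplectic --- contradicting $M_9<\mathrm{Aut}_\mathrm{symp}(X)$. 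So $\mathrm{Fix}_X(\sigma)=B$ is a single curve carrying an effective $M_9$-action, necessarily of genus $g\ge 2$ by Hurwitz. With $e(Y_\mathrm{min})=3$ and $n=0$ the Euler-characteristic formula~\eqref{eulerchar} gives $m+g=10$. If $m\ge 1$, the first extremal contraction removes a $G$-orbit of disjoint $(-1)$-curves of length $\ge 6$, so $m\ge 6$ and $g\le 4$; but $m=6$ would require a subgroup of order $12$, $m=7$ would require either an $M_9$-fixed $(-1)$-curve (hence an $M_9$-fixed point of a surface) or an orbit of length $7\nmid 72$, and $m=8$ would force $g=2$, contradicting $|\mathrm{Aut}(B)|=72>48$ for a genus-two curve. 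Therefore $m=0$: $Y=\mathbb P_2$, $B$ is a smooth $M_9$-invariant plane sextic, and $g(B)=10$.

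\emph{The branch sextic and the symplectic lift.} By Molien's formula the space of $M_9$-invariant sextic forms on $\mathbb P_2$ is one-dimensional, so $B$ must be the curve in the statement (the generator is recorded explicitly in suitable coordinates, following \cite{kf}), and one checks this sextic is smooth, so that the double cover $X$ is a K3-surface. Conversely, the $M_9$-action on $\mathbb P_2$ lifts to a central $C_2$-extension $E$ of $M_9$ on this double cover; since the covering involution $\sigma$ is antisymplectic and a composition of symplectic lifts is again symplectic, the symplectic lifts form a subgroup mapping isomorphically onto $M_9$, so $E=M_9\times\langle\sigma\rangle$ and $M_9$ acts symplectically centralised by $\sigma$. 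Together with the uniqueness of the $M_9$-action on $\mathbb P_2$ and of the invariant sextic, this identifies $(X,M_9)$ with the double cover of $\mathbb P_2$ branched along $\{x_1^6+x_2^6+x_3^6-10(x_1^3x_2^3+x_2^3x_3^3+x_3^3x_1^3)=0\}$, i.e.\ Mukai's $M_9$-example. The main obstacle is the third step: unlike for $N_{72}$, the small Euler characteristic of $\mathbb P_2$ leaves slack in~\eqref{eulerchar}, and closing it off --- ruling out every non-$\mathbb P_2$ Del Pezzo model, the two--elliptic-curve configuration, and a non-trivial Mori reduction --- requires the detailed subgroup structure of $M_9$ together with the antisymplecticity argument for fibrewise inversions.
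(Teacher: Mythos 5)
There is a genuine gap at the invariant-theory step. You assert that ``by Molien's formula the space of $M_9$-invariant sextic forms on $\mathbb P_2$ is one-dimensional, so $B$ must be the curve in the statement''. This is false, and it skips the substantive part of the argument. The branch curve only needs to be $M_9$-invariant as a curve (i.e.\ defined by a semi-invariant polynomial), and there are in fact three $M_9$-invariant sextic curves: Mukai's sextic and the two curves $\{f_a=0\}$ with
$f_a=x_1^6 + x_2^6+ x_3^6 + (18-3a)x_1^2x_2^2x_3^2 +2(x_1^3x_2^3 + x_1^3x_3^3+ x_2^3x_3^3) + a(x_1^4x_2x_3 + x_1x_2^4x_3+ x_1x_2x_3^4)$, $a^2-6a+36=0$, and depending on $a$ the polynomial $f_a$ is even honestly invariant, not merely semi-invariant. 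Excluding the two covers branched along $\{f_a=0\}$ is the real work: one uses that the fixed point $p=[0:1:-1]$ of the order-four element $I\in Q_8$ lies \emph{on} $\{f_a=0\}$, linearizes the action of $\langle I\rangle\times\langle\sigma\rangle$ at the unique preimage point, and finds that the induced action of $I$ on $Y$ near $p$ has determinant $\neq 1$, contradicting $\det(\tilde I)=1$ for the explicit matrix in $\mathrm{SL}_3(\mathbb C)$ fixing $p$. Without this (or some equivalent) argument your proof cannot conclude that $B$ is Mukai's sextic. The same issue undermines your last step: the claim that ``the symplectic lifts form a subgroup mapping isomorphically onto $M_9$'' is not automatic, because for a given $g$ the two lifts differ by the antisymplectic $\sigma$ and act on $\omega$ by $\pm\zeta$; if $\zeta\neq\pm1$ neither lift is symplectic, and this is exactly what happens for the order-four elements on the $f_a$-covers. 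Even for Mukai's sextic the symplectic lifting of $I,J$ has to be verified by a determinant-one linearization at a fixed point \emph{off} the branch curve (as in Remark~\ref{M9 symplectic}), not by the general nonsense you invoke.

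Your earlier steps are essentially sound but take a longer route than the paper: the paper gets $G$-minimality at once from the fact that cyclic or dihedral subgroups of $M_9$ have order at most six, so a Mori-fiber orbit would have length $\geq 12$, contradicting $m\leq 9$; it excludes elliptic branch curves in one line because they would force $e(Y)=12$ for the $G$-minimal Del Pezzo $Y$; and it obtains $Y\cong\mathbb P_2$ directly from the observation that $Q=B/C_3^2$ carries a $Q_8$-action, hence $g(Q)\geq 2$, hence $g(B)\geq 10$, while adjunction gives $g(B)=\deg(Y)+1\leq 10$. Your case-by-case Dolgachev analysis and your order-$12$/order-$24$ subgroup observations are acceptable substitutes (and your $m$-analysis is fine), though your treatment of the two-elliptic-curve configuration --- in particular the claim that the fibre-preserving involution restricts to $T_1$ as an inversion composed with a translation --- is sketchy compared with the Euler-characteristic argument available once $G$-minimality is known. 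But these are matters of economy; the missing exclusion of the sextics $\{f_a=0\}$ is the genuine gap.
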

\begin{proof}
We first check that $Y$ is $G$-minimal. Again, we proceed as in the proof of Theorem \ref{roughclassi} and Lemma \ref{subgroupsM9} above to see a stabilizer group of a Mori fiber has order at most six. If $Y$ is not $G$-minimal, this implies that the total number of Mori fibers is $ \geq 12$, contradicting $m \leq 9$.
Note that $X \to Y$ is not branched along one or two elliptic curves as this would imply $e(Y) =12$ and contradict the fact that $Y$ is a Del Pezzo surface. 

Let $B$ be the branch curve of $ X \to  Y$ and consider the quotient $Q$ of $B$ by the normal subgroup $N = C_3^2$ in $G$. On $Q$ there is an action of $Q_8$ implying that $Q$ is not rational or elliptic.
 It follows that the genus of $Q$ is at least two and the genus of $B$ is at least ten. Adjunction on the Del Pezzo surface $Y$ now implies $g=10$ and $Y \cong \mathbb P_2$. 

It follows from direct computation involving the generators of $M_9$ (cf.\ Appendix A.2 in \cite{kf}) that,
up to natural equivalence, there is a unique action of $M_9$ on the projective plane. We may therefore consider the explicit action of $M_9$ on $\mathbb P_2$ specified by Mukai (cf.\ \cite{mukai}).
Studying the induced action of $M_9$ on the space of sextic curves
one finds three $M_9$-invariant sextic curves, namely
$\{x_1^6 + x_2^6+ x_3^6-10( x_1^3x_2^3 + x_2^3x_3^3+ x_3^3x_1^3 ) =0\}$, 
which is the example presented by Mukai, and additionally the two curves defined by 
$f_a=x_1^6 + x_2^6+ x_3^6 + (18-3a)(x_1^2x_2^2x_3^2) +2(x_1^3x_2^3 + x_1^3x_3^3+ x_2^3x_3^3)
      + a(x_1^4x_2x_3 + x_1x_2^4x_3+ x_1x_2x_3^4)$, 
where $a$ is a solution of the quadratic equation $a^2-6a+36$.
Depending on the choice of $a$, the polynomial $f_a$ is either invariant or semi-invariant with respect to the action of $M_9$.

We need to prove that $X$ is not the double cover of $\mathbb P_2$ branched along $\{f_a=0\}$. If this was the case consider the fixed point $p=[0:1:-1]$ of the automorphism $I \in Q_8 < M_9 < \mathrm{PSL}_3(\mathbb C)$ given by
\[
\tilde I= \frac{1}{\xi -\xi^2}
 \begin{pmatrix}
1 & 1 & 1 \\
1 & \xi & \xi^2\\
1 & \xi^2& \xi
 \end{pmatrix} \in \mathrm{SL}_3(\mathbb C)
\]
for some third root of unity $\xi$
and note that $f_a(p)=0$. In particular, the fiber $\pi^{-1}(p)$ consists of one point $x \in X$. We linearize the $\langle I \rangle \times \langle \sigma \rangle$-action at $x$. In suitably chosen coordinates the action of the symplectic automorphism $I$ of order four is of the form  $(z,w) \mapsto (iz, -iw)$. Since the action of $\sigma$ commutes with $I$, the $\sigma$-quotient of $X$ is locally given by
$(z,w) \mapsto (z^2, w)$ or $(z,w) \mapsto (z, w^2)$.
It follows that the action of $I$ on $Y$ is locally given by either
$ (x,y) \mapsto (-x, -i y)$ or
$ (x, y) \mapsto (i x, -y)$.
In particular, the local linearization of $I$ at $p$ has determinant $\neq 1$. 
By a direct computation using the explicit form $\tilde I \in \mathrm{SL}_3(\mathbb C)$ of the generator $I$, in particular the facts that $\mathrm{det}(\tilde I) =1$ and $\tilde I v =v$ for $[v]=p$, we obtain a contradiction.
 This completes the proof of the proposition.
\end{proof}
\begin{remark}\label{M9 symplectic}
In the proof of the proposition above we have observed that an element of $\mathrm{SL}_3(\mathbb C)$ does not necessarily lift to a symplectic transformation on the double cover of $\mathbb P_2$ branched along a sextic given by an invariant polynomial.
Mukai's $M_9$-example $X$ is the double cover of $\mathbb P_2$ branched along the sextic curve $\{x_1^6 + x_2^6+ x_3^6-10( x_1^3x_2^3 + x_2^3x_3^3+ x_3^3x_1^3 ) =0\}$ and for this particular example, the action of $M_9$ does lift to a group of symplectic transformation as stated by Mukai.\\
To see this consider the set $\{a,b,I,J\}$ of generators of $M_9 =C_3^2 \rtimes Q_8= (\langle a \rangle  \times \langle b \rangle  ) \rtimes Q_8$. Since $a$ and $b$ are commutators in $M_9$, they can be lifted to symplectic transformation $\overline a, \overline b$ on $X$. For $I,J$ consider the linearization at the fixed point $[0:1:-1] \in \mathbb P_2$ and, using the explicit realization of the group, one checks that it has determinant one. Since $[0:1:-1]$ is \emph{not} contained in the branch set of the covering, its preimage in $X$ consists of two points $p_1,p_2$. We can lift $I$ ($J$, respectively) to a transformation of $X$ fixing both $p_1,p_2$ and a neighbourhood of $p_1$ is $I$-equivariantly isomorphic to a neighbourhood of $ [0:1:-1] \in \mathbb P_2$. In particular, the action of the lifted element $\overline I$ ($\overline J$, respectively) is symplectic. On $X$ there is the action of a degree two central extension $E$ of $M_9$.
The elements $\overline a, \overline b, \overline I, \overline J$ generate a subgroup $\tilde M_9$ of $E_\mathrm{symp}$ mapping onto $M_9$. Since $E_\mathrm{symp} \neq E$, the order of $\tilde M_9$ is 72 and it follows that $\tilde M_9$ is isomorphic to $M_9$. In particular $E$ splits as $E_\mathrm{symp} \times C_2$ with  $E_\mathrm{symp}= M_9$.
\end{remark}
%
%\subsection{The group $T_{48} = Q_8 \rtimes S_3$}
\subsection{The group \texorpdfstring{$T_{48} = Q_8 \rtimes S_3$}{T48}}
We begin by specifying the group structure of $T_{48}$: the element $c$ of order three in $S_3$ acts on $Q_8$ by permuting $I,J,K$ and an element $d$ of order two acts by exchanging $I$ and $J$ and mapping $K$ to $-K$.
Now let $X$ be a K3-surface with an action of $T_{48} \times C_2$ where the action of $G = T_{48}$ is symplectic and the generator $\sigma$ of $C_2$ is antisymplectic and has fixed points. 
\begin{lemma}\label{not two}
A $G$-minimal model $Y_\mathrm{min}$ of $Y$ is either $\mathbb P_2$, a Hirzebruch surface $\Sigma_k$ with $k >2$, or $e(Y_\mathrm{min}) \geq 9$.
\end{lemma}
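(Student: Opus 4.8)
The plan is to apply the equivariant minimal model program (Proposition \ref{Gmmp}): since $Y = X/\sigma$ is rational, a $G$-minimal model $Y_\mathrm{min}$ is either a Del Pezzo surface or a $G$-equivariant conic bundle over $\mathbb P_1$, and I would treat these two alternatives in turn. Recalling that $e(\mathbb P_2)=3$, that a Del Pezzo surface of degree $d$ has $e=12-d$ for $d\le 7$ and $e=4$ for $d=8$, and that every Hirzebruch surface has $e=4$, the statement amounts to excluding Del Pezzo surfaces of degree $4,5,6,7,8$ together with $\Sigma_0=\mathbb P_1\times\mathbb P_1$, $\Sigma_1$ and $\Sigma_2$. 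The arguments rest on the group structure of $G=T_{48}=Q_8\rtimes S_3$: its normal subgroups are $\{1\}$, the centre $Z=\{\pm 1\}$, $Q_8$, $\mathrm{SL}_2(\mathbb F_3)=Q_8\rtimes C_3$ and $G$, so every nontrivial normal subgroup contains $Z$, the only abelian normal subgroups are $\{1\}$ and $Z$, and every subgroup of index at most two (that is, $G$ or its unique index-two subgroup $\mathrm{SL}_2(\mathbb F_3)$) contains $Q_8$. Since the finite subgroups of $\mathrm{PGL}_2(\mathbb C)$ of order $8$ are $C_8$ and $D_8$, no subgroup of $G$ of index $\le 2$ embeds into $\mathrm{PGL}_2(\mathbb C)$; combined with the fact that a symplectic automorphism of $X$ has only isolated fixed points and hence cannot fix a curve pointwise, this yields a contradiction whenever $G$, or such a subgroup, acts faithfully on a curve isomorphic to $\mathbb P_1$.

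For a conic bundle $\varphi: Y_\mathrm{min}\to\mathbb P_1$, the ineffectivity $N$ of the induced $G$-action on the base is trivial or an elementary abelian $2$-group (as in the proof of Lemma \ref{conicbundle}), hence a normal elementary abelian $2$-subgroup of $G$, so $N\in\{\{1\},Z\}$. If $N=\{1\}$ then $G\hookrightarrow\mathrm{PGL}_2(\mathbb C)$, which is impossible; hence $N=Z$ and $G/N\cong S_4$ acts effectively on $\mathbb P_1$. Every $S_4$-orbit on $\mathbb P_1$ has at least six points, and the singular fibers of $\varphi$ lie over an invariant finite set, so either there are no singular fibers, in which case $Y_\mathrm{min}$ is some $\Sigma_k$, or there are at least six, in which case $e(Y_\mathrm{min})\ge 4+6=10$. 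The surface $\Sigma_1$ carries a unique $(-1)$-curve and so is never $G$-minimal, and $\Sigma_0=\mathbb P_1\times\mathbb P_1$ is dealt with below; it remains to rule out $\Sigma_2$, which is the delicate point.

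To exclude $\Sigma_2$, I would use the negative section $C_0\subset\Sigma_2$, the unique irreducible curve of negative self-intersection and hence $G$-invariant, together with the relation $B\sim -2K_Y$ for the branch curve of $\pi: X\to Y$. Let $\tilde C_0\subset Y$ be the strict transform of $C_0$ under the Mori reduction $Y\to\Sigma_2$; it is a smooth rational curve with $\tilde C_0^2\le C_0^2=-2$, and adjunction on $Y$ together with $B\sim -2K_Y$ gives $\tilde C_0\cdot B=-2\,\tilde C_0\cdot K_Y=4+2\tilde C_0^2$. If $\tilde C_0\subset B$, then the component of $\mathrm{Fix}_X(\sigma)$ lying over it (the two are isomorphic, $\pi$ restricting to an isomorphism from $\mathrm{Fix}_X(\sigma)$ onto $B$) is a $G$-invariant curve isomorphic to $\mathbb P_1$ on which $G$ acts faithfully, contradicting the first paragraph. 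If $\tilde C_0\not\subset B$, then $0\le\tilde C_0\cdot B=4+2\tilde C_0^2\le 0$, so $\tilde C_0^2=-2$ and $\tilde C_0\cap B=\emptyset$; hence $\pi$ is unramified over $\tilde C_0$ and $\pi^{-1}(\tilde C_0)$ is a disjoint union of two $(-2)$-curves interchanged by $\sigma$, on which $G$ acts by permutation, so that $G$ or its index-two subgroup $\mathrm{SL}_2(\mathbb F_3)$ acts faithfully on a curve isomorphic to $\mathbb P_1$ --- again impossible. Thus $Y_\mathrm{min}\ne\Sigma_2$, and a Hirzebruch $Y_\mathrm{min}$ without singular fibers has $k>2$.

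Finally I would run through the Del Pezzo surfaces by degree $d=K_{Y_\mathrm{min}}^2$. For $d=9$, $Y_\mathrm{min}\cong\mathbb P_2$; for $d\le 3$, $e(Y_\mathrm{min})=12-d\ge 9$; so only $d\in\{4,5,6,7,8\}$ must be excluded. A Del Pezzo surface of degree $7$, and one of degree $8$ other than $\mathbb P_1\times\mathbb P_1$, has an $\mathrm{Aut}$-invariant $(-1)$-curve and so is not $G$-minimal. For $d=5$, $\mathrm{Aut}(Y_\mathrm{min})\cong S_5$ has order $120$, not a multiple of $|G|=48$. For $d=6$ and $d=4$ one uses, from Dolgachev's tables (cf.\ \cite{dolgachev}), that $\mathrm{Aut}(Y_\mathrm{min})=A\rtimes\Gamma$ with $A$ abelian ($A=(\mathbb C^*)^2$, resp.\ $A=C_2^4$) and $|\Gamma|\le 12$ ($\Gamma\le S_3\times C_2$, resp.\ $\Gamma\in\{C_2,C_4,S_3,D_{10}\}$): then $G\cap A$ is a finite abelian normal subgroup of $G$, hence $\{1\}$ or $Z$, so $G/(G\cap A)$, of order $\ge 24$, would embed into $\Gamma$. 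It remains to exclude $\mathbb P_1\times\mathbb P_1$: if $G$ preserves the two rulings it embeds into $\mathrm{PGL}_2(\mathbb C)\times\mathrm{PGL}_2(\mathbb C)$, otherwise its index-two subgroup $\mathrm{SL}_2(\mathbb F_3)$ does; since every nontrivial normal subgroup of $G$, resp.\ of $\mathrm{SL}_2(\mathbb F_3)$, contains $Z$, the two coordinate projections cannot both be injective, so a subgroup of index $\le 2$ maps faithfully into a single $\mathrm{PGL}_2(\mathbb C)$, which is impossible. Assembling the cases yields the trichotomy. The main obstacle is the exclusion of $\Sigma_2$, where one must leave pure group theory and combine the geometry of the ruled surface with the structure of the branch divisor $B\sim -2K_Y$.
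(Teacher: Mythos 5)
Your proposal is correct, and at the two delicate points it takes a genuinely different route from the paper. The common skeleton (equivariant MMP dichotomy, identification of the ineffectivity on the base with the centre $Z$, group-theoretic exclusion of $\mathbb P_1\times\mathbb P_1$ and of Del Pezzo surfaces of degree $4$--$8$ via normal-subgroup structure and Dolgachev's tables) matches the paper. The differences: (i) for conic bundles the paper shows there are \emph{no} singular fibers at all, by linearizing $Z$ at the node of a singular fiber and using the two $Z$-fixed sections $\Lambda_1,\Lambda_2$; you instead observe that singular fibers lie over an $S_4$-invariant set on the base, hence come in groups of at least six, and you absorb that case into the alternative $e(Y_\mathrm{min})\geq 9$. (ii) For the exclusion of $\Sigma_2$ the paper stabilizes both sections, passes to the line bundle $H^2=-K_{\mathbb P_1}$ and derives the splitting $T_{48}\cong S_4\times C_2$, a contradiction; you instead play the $G$-invariant negative section against $B\sim -2K_Y$ by intersection theory and lift to $X$, obtaining either a $G$-invariant rational ramification curve or an unramified preimage consisting of two $(-2)$-curves, and in both cases a faithful action of $T_{48}$ or of its index-two subgroup (which contains $Q_8$) on $\mathbb P_1$ --- impossible. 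Both arguments are sound; yours is arguably more elementary and in fact rules out every $\Sigma_k$ with $k\geq 2$ without appealing to the later Lemma \ref{no rat T48}, since for $k>2$ your adjunction count forces the section into $B$. What the paper's version buys is the stronger structural output that the third alternative $e(Y_\mathrm{min})\geq 9$ arises only from Del Pezzo surfaces of degree $\leq 3$; this is tacitly used later when the case $e(Y_\mathrm{min})=11$ is identified with a degree-one Del Pezzo surface, whereas with your proof one must add the (easy) remark that a conic bundle with $e=11$ would need seven singular fibers, which is incompatible with your own orbit-size bound for the effective $S_4$-action on the base.
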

\begin{proof}
Let us first consider the case where $ Y_\mathrm{min}$ is a Del Pezzo surface.

If $Y_\mathrm{min} \cong \mathbb P_1 \times \mathbb P_1$, then both canonical projections are equivariant with respect to the index two subgroup $G'= Q_8 \rtimes C_3$. 
Since $Q_8$ has no effective action on $\mathbb P_1$, it follows that the subgroup $Z =\{+1,-1\} < Q_8$ acts trivially on the base. This holds with respect to both projections and the subgroup $Z$ acts trivially on $Y_\mathrm{min}$, a contradiction. 

Using the group structure of $T_{48}$ one checks that the only nontrivial normal subgroup $N$ of $T_{48}$ such that $N \cap Q_8 \neq Q_8$ is the center $Z= \{+1,-1\}$ of $T_{48}$. It follows that $T_{48}$ is neither a subgroup of $(\mathbb C^*)^2 \rtimes (S_3 \times C_2)$ nor a subgroup of any of the automorphism groups $C_2^4 \rtimes \Gamma$ for $\Gamma \in \{C_2, C_4, S_3, D_{10}\}$ of a Del Pezzo surface of degree four. 
Furthermore,
$T_ {48} \nless S_5$ and consequently $\mathrm{deg}(Y_\mathrm{min}) \neq 4,5,6$.

So if $Y_\mathrm{min}$ is a Del Pezzo surface, then $Y_\mathrm{min} \cong \mathbb P_2$ or $e(Y_\mathrm{min}) \geq 9$ .

Let us now turn to the case where $Y_\mathrm{min}\to \mathbb P_1$ is an equivariant conic bundle.
The center $Z = \{+1,-1\}$ of $G= T_{48}$ acts trivially on the base and has two fixed points in the generic fiber. Let $\Lambda_1$ and $\Lambda_2$ denote the two curves of $Z$-fixed points in $Y_\mathrm{min}$. 

We first show that $Y_\mathrm{min}$ is not a conic bundle with singular fibers.
Any singular fiber $F$ is the union of two (-1)-curves $F_1,F_2$ meeting transversally in one point. We consider the action of $Z$ on this union of curves. The group $Z$ does not act trivially on either component of $F$ since linearization at a smooth point of $F$ would yield a trivial action of $Z$ on $Y_\mathrm{min}$.
Consequently,
it has either one or three fixed points on $F$. 
The first is impossible since $\Lambda_1$ and $\Lambda_2$ intersect $F$ in two points. It follows that $Z$ stabilizes each curve $F_i$. We linearize the action of $Z$ at the point of intersection $F_1 \cap F_2$. The intersection is transversal and the action of $Z$ is by $-\mathrm{Id}$ on the tangent space $T_{F_1} \oplus T_{F_2}$ contradicting the fact the $Z$ acts trivially on the base. Thus $Y_\mathrm{min}$ is not a conic bundle with singular fibers.

It remains to consider the case where $p:Y_\mathrm{min} = \Sigma _k \to \mathbb P_1$ is a Hirzebruch surface.
The curves $\Lambda_1$ and $\Lambda_2$ are disjoint sections of $p$. 
This is only possible if $\Lambda_1^2 = -\Lambda_2^2 = k$. 
In particular, the action of $T_{48}$ on $\Sigma_k$ stabilizes each curve $\Lambda_i$.
Removing the exceptional section $\Lambda_2= E_\infty$ from $\Sigma_k$, we obtain $H^k$, the $k^\text{th}$ power of the hyperplane bundle of $\mathbb P_1$. We chose the section $\Lambda_2$ to be the zero section and conclude that the action of $T_{48}$ on $H^k$ is by bundle automorphisms. 
If $k=2$, then $H^k$ is the anticanonical line bundle of $\mathbb P_1$ and the effective action of $S_4$ on the base induces an action of $S_4$ on $H^2$ by bundle automorphisms. It follows that $T_{48}$ splits as $S_4 \times C_2$, a contradiction. Thus, if $Y_\mathrm{min}$ is a Hirzebruch surface $\Sigma_k$, then $k \neq 2$. 
\end{proof}
\begin{lemma}\label{no rat T48}
 There are no rational curves in $\mathrm{Fix}_X(\sigma)$.
\end{lemma}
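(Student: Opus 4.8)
The plan is to argue by contradiction, combining three ingredients: a group-theoretic restriction on the possible stabilizers of a rational branch curve, the fact that such a curve has self-intersection $-4$ and so must be heavily modified by the Mori reduction, and the Euler-characteristic bound as refined by Lemma \ref{not two}. So suppose $\mathrm{Fix}_X(\sigma)$ contains a rational curve; let $C\subset Y=X/\sigma$ be its image and $H=\mathrm{Stab}_G(C)$. Since a symplectic automorphism of $X$ has only isolated fixed points, $H$ acts effectively on $C\cong\mathbb P_1$, so $H$ is cyclic, dihedral, or one of $T_{12},O_{24},I_{60}$. Using $T_{48}=Q_8\rtimes S_3$ and Remark \ref{order of symp aut} (elements have order $\leq 8$): the unique index-two subgroup of $T_{48}$ is $G'=Q_8\rtimes C_3\cong\mathrm{SL}(2,3)$, so $T_{48}$ contains neither $S_4=O_{24}$ nor a dihedral group of order $24$; it contains no $A_4=T_{12}$, because an $A_4$ would meet the centre $Z$ trivially and map isomorphically onto the index-two subgroup $A_4<S_4=T_{48}/Z$, forcing $\mathrm{SL}(2,3)\cong A_4\times C_2$, which is false; and $I_{60}$ is too large. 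Hence $H$ is cyclic of order $\leq 8$ or dihedral of order $\leq 12$, so $|H|\leq 12$, the orbit $G.C$ consists of $[G:H]\geq 4$ curves, and $n\geq 4$, where $n\leq 10$ denotes the number of rational curves in $\mathrm{Fix}_X(\sigma)$.

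Next I would bound the total number $m$ of Mori fibers from below. By Remark \ref{selfintcovering} we have $C^2=-4$. By Lemma \ref{not two} a $G$-minimal model $Y_\mathrm{min}$ of $Y$ is $\mathbb P_2$, a Hirzebruch surface $\Sigma_k$ with $k>2$, or has $e(Y_\mathrm{min})\geq 9$; moreover the image of $C$ in $Y_\mathrm{min}$ cannot be the exceptional section $E_\infty$ of $\Sigma_k$, since $E_\infty$ is $\mathrm{Aut}(\Sigma_k)$-invariant and $C$ would then be its $G$-invariant strict transform, forcing $\mathrm{Stab}_G(C)=T_{48}$ to act effectively on $\mathbb P_1$. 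Thus in every case $C$ must be modified by the Mori reduction, and by the argument in the proof of Lemma \ref{noratN72} — a single $H$-orbit of intersection points with the Mori fibers would leave behind a singular rational curve of self-intersection $0$ on $Y_\mathrm{min}$, which a Del Pezzo or Hirzebruch surface does not carry — further Mori fibers are forced; since by Lemma \ref{at most two} a Mori fiber meeting $B$ in one point, or tangentially, meets no other branch curve, these extra Mori fibers are dedicated to $C$, and $G$-invariance yields $m\geq 2n$.

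I would then combine this with the Euler-characteristic formula \eqref{eulerchar} and the bound $m\leq n+12-e(Y_\mathrm{min})$ of \eqref{moribound}. When $e(Y_\mathrm{min})\geq 9$ this reads $m\leq n+3$, so $2n\leq n+3$, i.e.\ $n\leq 3$, contradicting $n\geq 4$. The remaining possibilities, $Y_\mathrm{min}=\mathbb P_2$ and $Y_\mathrm{min}=\Sigma_k$ with $k>2$, where one only has $m\leq n+9$, require sharpening the lower bound on $m$: realizing $Y$ as $\mathbb P_2$ (or $\Sigma_k$) blown up $m$ times, the curve $C$ — if not itself contracted — is the \emph{smooth} strict transform of a plane curve $\overline C$ of some degree $e$ with $e^2-\sum\mu_i^2=-4$, and the smoothness of $C$ forces the multiplicities $\mu_i$ to be small, hence many more Mori fibers to meet $C$ (fewest when $\overline C$ is a line); combining this with $G$-invariance and the constraint from Lemma \ref{at most two} that each Mori fiber meets at most two of the rational branch curves raises $m$ past $n+9$ and gives the desired contradiction. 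Hence $\mathrm{Fix}_X(\sigma)$ contains no rational curve.

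The part I expect to be the main obstacle — and the reason this does not simply fall out of the general Proposition on rational ramification curves — is that $|T_{48}|=48$ is small, so the stabilizer estimate only yields $n\geq 4$, not an immediate clash with $n\leq 10$. The decisive and delicate work is the Mori-fiber bookkeeping: one must separately handle the three possibilities for $Y_\mathrm{min}$ from Lemma \ref{not two}, treat the sub-case $H\cong C_8$ (where $C$ can meet the Mori fibers in a single point), and — in the case $Y_\mathrm{min}\cong\mathbb P_2$, which genuinely occurs for $T_{48}$ (cf.\ row 11a of Table \ref{Mukai times invol}) — push the dedicated-Mori-fiber count using the plane-curve geometry of the image $M(C)$, which is precisely where the Euler-characteristic slack is largest.
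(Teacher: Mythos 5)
Your opening step (effective action of $H=\mathrm{Stab}_G(C)$ on $C$, exclusion of $S_4$, $A_4$, $I_{60}$ from $T_{48}$, hence $|H|\leq 12$ and $n\geq 4$) agrees with the paper, but after that the argument has two genuine gaps. First, the inequality $m\geq 2n$ is not established. You import it "by the argument in the proof of Lemma \ref{noratN72}", but that argument rests on $H$ being non-cyclic, so that every $H$-orbit on $C$ has length $\geq 2$ and $C$ meets the Mori fibers in at least two points. In $T_{48}$ the stabilizer can be $C_8$ (or $C_6$, $D_8$), and then $C$ may meet the union of Mori fibers in a single $H$-fixed point, possibly tangentially, where by Lemma \ref{at most two} one tangent Mori fiber already raises $C^2$ from $-4$ to $0$ -- an option not ruled out on a Hirzebruch surface by the self-intersection argument alone. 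The paper eliminates exactly these stabilizers (and constrains the stabilizers of Mori fibers to exclude $C_4$ and $D_8$) by counting fixed points of the central involution $Z=\{\pm 1\}$: a symplectic involution has only eight fixed points on $X$, while $H\in\{C_6,C_8,D_8\}$ would force at least twelve $Z$-fixed points on the orbit $G.C$. This central-involution bookkeeping is the engine of the paper's proof and is entirely missing from yours; it is what reduces the problem to the two cases $H=D_{12}$, $n=4$ and $H=D_6$, $n=8$.

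Second, even granting $m\geq 2n$, your contradiction only closes the case $e(Y_{\mathrm{min}})\geq 9$. For $Y_{\mathrm{min}}\cong\mathbb P_2$ and $Y_{\mathrm{min}}\cong\Sigma_k$, $k>2$, you offer only a plan ("push $m$ past $n+9$ using plane-curve multiplicities"), and that plan does not work as stated: for $n=4$ with the images of the rational branch curves being lines one needs $\sum\mu_i^2=5$ on each, giving only $m\geq 10$, which is compatible with the bound $m\leq 13$, so no contradiction follows from the count alone. The paper's treatment of these cases is a genuine case-by-case analysis over the admissible values of $m$ (constrained to orbits of length $\geq 4$, resp.\ $\geq 8$), and the contradictions obtained are of a different nature: excess $Z$-fixed points, images of the $R_i$ that are inadmissible curves on a Del Pezzo or Hirzebruch surface, stabilizer fixed-point constraints on Mori fibers of the second reduction step, the Euler-characteristic formula combined with the previously excluded degrees $4$ and $5$, and, in the $\Sigma_k$ case with $m=12$, the linear equivalence $\sum\tilde R_i\sim -2K_{\Sigma_k}\sim 4E_\infty+(2k+4)F$ forcing $k=2$, contradicting Lemma \ref{not two}. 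As it stands, your proposal correctly identifies where the difficulty lies but leaves precisely that part unproved.
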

\begin{proof}
We let $n$ denote the total number of rational curves in $\mathrm{Fix}_X(\sigma)$ and assume $n >0$. Recall $n \leq 10$, let $C$ be a rational curve in $B=\pi(\mathrm{Fix}_X(\sigma)) \subset Y$ and let $H = \mathrm{Stab}_G(C) < G$ be its stabilizer group. The action of $H$ on $C$ is effective, the index of $H$ in $G$ is at most 8. Using the quotient homomorphism $T_{48} \to T_{48}/Q_8 = S_3$ one checks that $T_{48}$ does not contain $O_{24}=S_4$ or $T_{12}= A_4$ as a subgroup. It follows that $H$ is a cyclic or a dihedral group.

If $H\in \{C_6, C_8, D_8\}$, then $H$ and all conjugates of $H$ in $G$ contain the center $Z= \{+1,-1\}$ of $G$. It follows that $Z$ has two fixed point on each curve $gC$ for $g \in G$. Since there are six (or eight) distinct curves $gC$ in $Y$, it follows that $Z$ has at least 12 fixed points in $Y$ and in $X$. This contradicts to assumption that $Z < G$ acts symplectically on $X$ and therefore has eight fixed points in the K3-surface $X$.

It remains to study the cases $H = D_{12}$, i.e., $n=4$, and $H = D_6$, i.e., $n = 8$. 

We note that a Hirzebruch surface has precisely one curve with negative self-intersection and only fibers have self-intersection zero. A Del Pezzo surface does not contains curves of self-intersection less than $-1$. The rational branch curves must therefore meet the union of Mori fibers in $Y$.

The total number of Mori fibers is bounded by $n+9$. We study the possible stabilizer subgroups $\mathrm{Stab}_G(E) < G$ of Mori fibers. A Mori fiber $E$ with self-intersection (-1) meets the branch locus $B$ in one or two points and its stabilizer is either cyclic or dihedral. If $\mathrm{Stab}_G(E) \in \{C_4, D_8\}$, then the points of intersection of $E$ and $B$ are fixed points of the center $Z$ of $G$ and we find too many $Z$-fixed points on $X$. 

Assume $n=4$ and let $R_1, \dots R_4$ be the rational curves in $B$. We denote by $\tilde R_i$ their images in $Y_\mathrm{min}$. The total number $m$ of Mori fibers is bounded by 12.
We go through the list of possible configurations:
\begin{itemize}
 \item 
If $m = 4$, there is no invariant configuration of Mori fibers such that the contraction maps the four rational branch curves to an admissible configuration on the Hirzebruch or Del Pezzo surface $Y_\mathrm{min}$.
\item
If $m= 6$, then $\mathrm{Stab}_G(E) = C_8$ and the points of intersection of $E$ and $B$ are $Z$-fixed. Since $Z$ has at most eight fixed points on $B$, it follows that each curve $E$ meets $B$ only once. The images $\tilde R_i$ of the $R_i$ contradict our observations about curves in Del Pezzo and Hirzebruch surfaces.
\item
If $m=8$ and all Mori fibers have self-intersection $-1$, then each Mori fiber meets $\bigcup R_i$ in a $Z$-fixed point.
 Since there at at most eight such points, it follows that each Mori fibers meets $\bigcup R_i$ only once and their contractions does not transform the curves $R_i$ sufficiently. 
\item
If $m=8$ and only four Mori fibers have self-intersection $-1$, we consider the four Mori fibers of the second reduction step. Each of these meets a Mori fiber $E$ of the first step in precisely one point. By invariance, this would have to be a fixed points of the stabilizer $ \mathrm{Stab}_G(E)= D_{12}$, a contradiction.
\item
If $m=12$, then either $e(Y_\mathrm{min}) = 3$ and there exist a branch curve $B_{g=2}$ of genus two or $e(Y_\mathrm{min}) = 4$ and $B = \bigcup R_i$. In the first case, $Y_\mathrm{min}  \cong \mathbb P_2$ and twelve Mori fibers are not sufficient to transform $B = B_{g=2} \cup \bigcup R_i$ into an admissible configuration of curves in the projective plane.
So $Y_\mathrm{min} = \Sigma_k$ for $k > 2$.
Recall that $Z$ has two fixed points in each fiber of $p: \Sigma_k \to \mathbb P_1$, i.e., the $Z$-action on $\Sigma_k$ has two disjoint curves of fixed points. As was remarked above, these curves are the exceptional section $E_\infty$ of self-intersection $-k$ and a section $E_0 \sim E_\infty + k F$ of self-intersection $k$. Here $F$ denotes a fiber of $p: \Sigma_k \to \mathbb P_1$. There is no automorphisms of $\Sigma_k$ mapping $E_\infty$ to $E_0$.
Each rational branch curve $\tilde R_i$ has two $Z$-fixed points. These are exchanged by an element of $\mathrm{Stab}_G(R_i)$ and therefore both lie on either $E_\infty$ or $E_0$, i.e., $\tilde R_i$ cannot have nontrivial intersection with both $E_0$ and $E_\infty$. By invariance all curves $\tilde R_i$ either meet $E_0$ or $E_\infty$ and not both.
Using the fact that $\sum \tilde R_i$ is linearly equivalent to $-2K_{\Sigma_k} \sim 4 E_\infty +(2k +4)F$ we find that $\tilde R_i \cdot E_\infty = 0$ and $k=2$, a contradiction to Lemma \ref{not two}. 
\end{itemize}
We have shown that
all possible configurations in the case $n \neq 4$ lead to a contradiction. We now turn to the case $n=8$ and let $R_1, \dots R_8$ be the rational branch curves. The total number of Mori fibers is bounded by 16. Note that by invariance, the $G$-orbit of a Mori fiber meets $\bigcup R_i$ in at least 16 points or not at all. In particular, Mori fibers meeting $R_i$ come in orbits of length $\geq 8$. As above, we go through the list of possible configurations.
\begin{itemize}
 \item 
If $m =16$, then the set of all Mori fibers consists of one orbit of length 16 or of two orbits of length eight. If all 16 Mori fibers meet $B$, then each meets $B$ in one point and $R_i$ is mapped to a (-2)-curve in $Y_\mathrm{min}$. If only eight Mori fibers meet $B$, then each of the eight Mori fibers of the second reduction step meets one Mori fiber $E$ of the first reduction step in one point. This point has to be a $\mathrm{Stab}_G(E)$-fixed point. The $\mathrm{Stab}_G(E)$-fixed points on $E$ must however coincide with the points $E \cap B$, a contradiction.
\item
If $m=12$, then the set of all Mori fibers consists of a single $G$-orbit and each curve $R_i$ meets three distinct Mori fibers. Their contraction transforms $R_i$ into a (-1)-curve on $Y_\mathrm{min}$. It follows that $Y_\mathrm{min}$ contains at least eight (-1)-curves and is a Del Pezzo surface of degree $\leq 5$. We have seen above that $\mathrm{deg}( Y_\mathrm{min}) \neq 4,5$ and therefore $e(Y_\mathrm{min}) \geq 9$. With $m=12$ and $n=8$, this contradicts the Euler characteristic formula $24 = 2 e(Y_\mathrm{min}) +2m -2n +(2g-2)$.
\item
If $m=8$ there is no invariant configuration of Mori fibers such that the contraction maps the eight rational branch curves to an admissible configuration on the Hirzebruch or Del Pezzo surface $Y_\mathrm{min}$.
\end{itemize}
This completes the proof of the lemma.
\end{proof}
Since there is an effective action of $T_{48}$ on $\mathrm{Fix}_X(\sigma)$, it is neither an elliptic curve nor the union of two elliptic curves. It follows that $ X \to Y$ is branched along a single $T_{48}$-invariant curve $B$ with $g(B) \geq 2$.
\begin{lemma}
The genus of $B$ is neither three nor four.
\end{lemma}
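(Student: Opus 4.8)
The statement to prove is that the branch curve $B$ of $X \to Y$, which carries an effective action of $T_{48}$ and has genus $g(B) \geq 2$, cannot have genus $3$ or $4$. The natural strategy, parallel to what was done for $F_{384}$, $A_{4,4}$, $T_{192}$, and $H_{192}$, is to pass to a quotient of $B$ by a well-chosen normal subgroup of $T_{48}$ and derive a contradiction from the Riemann–Hurwitz formula together with the constraint that isotropy groups at points of $B$ must be cyclic (symplectic automorphisms have only isolated fixed points on $X$, so stabilizers acting on a curve in $B$ are cyclic or dihedral, and point stabilizers are cyclic). The most useful normal subgroup here is $Q_8 \lhd T_{48}$, with quotient $S_3$; alternatively one can use the center $Z = \{\pm 1\}$ or the subgroup $Q_8$ together with the $C_3 \lhd S_3$.

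First I would recall the Hurwitz bound: since $T_{48}$ acts effectively on $B$, we need $48 \leq 84(g(B)-1)$, which already forces $g(B) \geq 2$ but does not by itself exclude $g = 3$ or $g=4$ — so a finer argument is needed. For $g(B) \in \{3,4\}$ I would form the quotient $Q = B/Q_8$ with its induced $S_3 = T_{48}/Q_8$-action and apply Riemann–Hurwitz to the degree $8$ cover $B \to Q$:
\[
2 - 2g(B) = e(B) = 8\, e(Q) - \sum_{\text{branch pts}} \bigl(8 - |Q_8/\mathrm{Stab}|\bigr).
\]
The point stabilizers inside $Q_8$ are cyclic, hence among $\{1, C_2, C_4\}$ (note $Q_8$ has a unique $C_2$, namely $Z$), contributing $8$, $4$, or $2$ fixed points respectively; so every individual branch point contributes a multiple of $2$ to the total, and in fact the contributions from $C_2$-isotropy come in $Z$-orbits. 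For $g(B) = 3$: $e(B) = -4 = 8\,e(Q) - (\text{contribution})$. Since $Q_8$ is nonabelian it admits no effective action on a curve of genus $0$ or $1$ only if... — here I must check: $Q_8$ does act on elliptic curves, so I cannot immediately say $Q$ is not elliptic. Instead I would argue directly on the arithmetic: $e(Q) \in \{2, 0, -2, \ldots\}$, and for each value solve for the branch contribution and check divisibility/positivity and compatibility with the $S_3$-action on $Q$ (which has orbit-length constraints: an $S_3$-orbit on $\mathbb{P}^1$ has length $\geq 2$, on a higher-genus curve the element of order $3$ has its own fixed-point constraints). The genus $4$ case, $e(B) = -6$, is handled the same way.

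The main obstacle I anticipate is that, unlike the cases $F_{384}, T_{192}$ where $g(B) = 10$ was large and the quotient argument was clean, here $g(B) \in \{3,4\}$ is small and the arithmetic slack is tight, so a single quotient may not suffice — I expect to need to combine the $B \to B/Q_8$ analysis with a second descent $B/Q_8 \to B/T_{48}$ (using the $S_3$-orbit structure and the order-$3$ and order-$2$ element fixed-point counts), and possibly to invoke the classification of the $T_{48}$-minimal model $Y_{\mathrm{min}}$ from Lemma \ref{not two} together with adjunction $2g(B) - 2 = 2K_{Y_{\mathrm{min}}}^2$ wherever $Y_{\mathrm{min}}$ is Del Pezzo (forcing $K^2 = g(B)-1 \in \{2,3\}$, i.e.\ a degree $2$ or $3$ Del Pezzo — and Lemma \ref{not two} already excludes $\deg Y_{\mathrm{min}} \in \{4,5,6\}$, leaving $\mathbb{P}^2$, degree $\leq 3$, or a Hirzebruch $\Sigma_k$ with $k>2$). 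Ruling out the Hirzebruch case for small $g$ requires counting Mori fibers via the Euler-characteristic formula \eqref{eulerchar} as in Lemma \ref{no rat T48}. So the proof will likely be a short case check over $(g, Y_{\mathrm{min}})$, each case closed either by the Riemann–Hurwitz divisibility obstruction on $B \to B/Q_8$ or by an intersection-theory/Mori-fiber count; the delicate point is making sure no arithmetically consistent configuration survives both filters.
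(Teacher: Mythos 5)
There is a genuine gap. Your sketch stays in the right circle of techniques (quotient by a normal subgroup, Riemann--Hurwitz, cyclic point stabilizers), but the constraints you actually name do not eliminate the surviving configurations, and the ingredient that makes the argument close is missing: Nikulin's fixed-point counts for symplectic automorphisms (Remark \ref{order of symp aut}), in particular that the central involution $Z=\{\pm1\}<T_{48}$ acts symplectically on $X$ and therefore has exactly eight fixed points there, hence at most eight on $B$, and that an order-four symplectic automorphism has exactly four. Run your own arithmetic for $B\to Q=B/Q_8$: for $g(B)=3$ and $Q\cong\mathbb P_1$ the branch contribution is $20=4a+6b$, with solutions $(a,b)=(5,0),(2,2)$, both compatible with cyclic stabilizers, divisibility, and $S_3$-orbit lengths on $\mathbb P_1$ (lengths $2,3,6$); and for $e(Q)=0$ the configurations $(a,b)=(1,0)$ (for $g=3$) and $(0,1)$ (for $g=4$) pass every filter you list --- you yourself flag that you cannot exclude an elliptic quotient, and the proposal never supplies the extra argument needed there (for instance, that the fibre over the unique branch point would be a $T_{48}$-invariant set of $4$ resp.\ $2$ points, forcing a cyclic point stabilizer of order $12$ resp.\ $24$, which $T_{48}$, being the binary octahedral group with element orders $1,2,3,4,6,8$, does not contain). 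Your fallback is also not sound as stated: adjunction gives $2g(B)-2=2K_Y^2$ on $Y$, not on $Y_{\mathrm{min}}$, so $g(B)-1=K^2_{Y_{\mathrm{min}}}-m$; in particular $g=4$ with $Y_{\mathrm{min}}\cong\mathbb P_2$ and $m=6$ is perfectly consistent with the Euler-characteristic and Mori-orbit counting --- that is precisely the configuration the subsequent lemma needs the present statement to kill, so the $(g,Y_{\mathrm{min}})$ case check cannot replace it.

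For comparison, the paper's proof is shorter and uses a different quotient: it divides by the center $Z$ rather than by $Q_8$, writes $e(B)=2e(Q)-|\mathrm{Fix}_B(Z)|$, and uses that $T_{48}/Z\cong S_4$ admits no effective action on curves of genus one or two (Remark \ref{S4 not on g=1,2}) to force $Q\cong\mathbb P_1$; then $g(B)=4$ would require ten $Z$-fixed points on $B$, contradicting the eight-point bound, while $g(B)=3$ would put all eight $Z$-fixed points of $X$ on $R$, so the four fixed points of the order-four element $I$ would lie on $R$ and descend to four fixed points of the involution induced by $I$ on $Q\cong\mathbb P_1$, which is absurd. Both the $S_4$ restriction and these two fixed-point counts have no counterpart in your outline, and without them (or a much finer $T_{48}$-orbit analysis than you indicate) the proposed proof does not go through.
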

\begin{proof}
We consider the quotient $Q = B/Z$ of the curve $B$ by the center $Z$ of $G$ and apply the Euler characteristic formula,  $e(B) = 2 e(Q) - |\mathrm{Fix}_B(Z)|$. On $Q$ there is an effective action of the group $G/Z = (C_2 \times C_2) \rtimes C_3 = S_4$. Using Remark \ref{S4 not on g=1,2} we see that $e(Q) \in \{2, -4, -6,-8 \dots\}$.

If $g(B)=3$, then $e(B) = -4$ and the only possibility is $Q \cong \mathbb P_1$ and $|\mathrm{Fix}_B(Z)| =8$. In particular, all $Z$-fixed points on $X$ are contained in the curve $R = \pi^{-1}(B)$. Let $A < G$ be the group generated by $I \in Q_8 = \{ \pm 1, \pm I, \pm J, \pm K \}$. The four fixed points of $A$ in $X$ are contained in $\mathrm{Fix}_X(Z) = \mathrm{Fix}_B(Z)$ and the quotient group $A/Z \cong C_2$ has four fixed points in $Q$. This is a contradiction.

If $g(B)=4$, then $e(B) = -6$ and the only possibility is $Q \cong \mathbb P_1$ and $|\mathrm{Fix}_B(Z)| =10$. This contradicts the fact that $Z$ has at most eight fixed points in $B$ since it has precisely eight fixed points in $X$.
\end{proof}
In Lemma \ref{not two} we have reduced the classification to the cases $e(Y_\mathrm{min}) \in \{3,4, 9, 10, 11\}$. In the following, we will exclude the cases $e(Y_\mathrm{min}) \in \{4, 9, 10\}$ and describe the remaining cases more precisely. Recall that the maximal possible stabilizer subgroup of a Mori fiber is $D_{12}$, in particular, $m = 0$ or $ m \geq 4$.
\begin{lemma}
 If  $e(Y_\mathrm{min}) =3$, then $Y_\mathrm{min} = Y = \mathbb P_2$ and $X \to Y$ is branched along the curve $\{x_1x_2(x_1^4-x_2^4) + x_3^6=0\}$. In particular, $Y$ is equivariantly isomorphic to Mukai's $T_{48}$-example.
\end{lemma}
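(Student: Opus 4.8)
The plan is to run the same machine as in the preceding subsections: reduce $Y_\mathrm{min}$, extract the numerical data from the Euler characteristic, kill the spurious configurations with the central involution, and then pin down $\mathbb P_2$ and the sextic by invariant theory. First, since $e(\mathbb P_2)=3$ while a Hirzebruch surface has $e=4$, the hypothesis $e(Y_\mathrm{min})=3$ together with Lemma \ref{not two} forces $Y_\mathrm{min}\cong\mathbb P_2$. There are no rational branch curves (Lemma \ref{no rat T48}) and, by the preceding lemma, the single smooth branch curve $B$ satisfies $g(B)\geq 2$ and $g(B)\notin\{3,4\}$, so the Euler characteristic formula \eqref{eulerchar} reads $m+g(B)=10$. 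Every Mori contraction removes a $G$-orbit of disjoint $(-1)$-curves with common stabilizer of order $\leq 12$ (Remark \ref{stab of minus one curve}), hence every such orbit has length $\geq |T_{48}|/12=4$; so $m\in\{0,4,6,8\}$, and $m=6$ (which would give $g(B)=4$) is impossible. This leaves $(m,g(B))\in\{(0,10),(4,6),(8,2)\}$, and the first is the assertion of the lemma.

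Next I would eliminate $(4,6)$ and $(8,2)$ using the central involution $Z=\{\pm 1\}\lhd T_{48}$, which acts symplectically and so has exactly eight fixed points on $X$ (Remark \ref{order of symp aut}), whence $|\mathrm{Fix}_B(Z)|\leq 8$. In case $(4,6)$ the four Mori fibers form one orbit whose stabilizer has order $12$; as $T_{48}$ has no cyclic subgroup of order $12$, this stabilizer is $D_{12}\cong C_2\times S_3$, containing $Z$ centrally. Since the stabilizer acts effectively on the rational curve $E$ (proof of Theorem \ref{roughclassi}, Remark \ref{self-int of Mori-fibers}) and its only invariant subset of cardinality $\leq 2$ is the pair of poles of the cyclic subgroup $C_6\supset Z$, Lemma \ref{at most two} forces $E\cap B$ to be those two $Z$-fixed points; the four fibers being disjoint we get $|\mathrm{Fix}_B(Z)|=8$, whence $-10=e(B)=2\,e(B/Z)-8$ gives the impossible value $e(B/Z)=-1$. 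For $(8,2)$ the eight Mori fibers form either two orbits of length four with $D_{12}$-stabilizers — forcing sixteen $Z$-fixed points on $B$, absurd — or a single orbit of length eight with stabilizer of order six; if that stabilizer is $C_6\supset Z$, each of the eight disjoint $(-1)$-curves again contributes a $Z$-fixed point to $B$, giving $|\mathrm{Fix}_B(Z)|=8$ and the impossible value $e(B/Z)=3$, while the remaining subcase of $S_3$-stabilizers (which contain no central involution) I would rule out by a sharper count of the fixed loci of $Z$ and of the Sylow $3$-subgroups of $T_{48}$ on $X$. This last subcase is the real obstacle of the lemma; it may alternatively be disposed of by noting that a \emph{smooth} genus-two branch curve occurs over the degree-one del Pezzo surface of Example 11b and not over $\mathbb P_2$. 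Thus $Y=Y_\mathrm{min}=\mathbb P_2$ and $B$ is a smooth $T_{48}$-invariant sextic with $g(B)=10$.

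It then remains to show that, up to equivariant equivalence, $T_{48}$ has a unique faithful action on $\mathbb P_2$, arising from its essentially unique three-dimensional projective representation, and that the only smooth invariant sextic is $\{x_1x_2(x_1^4-x_2^4)+x_3^6=0\}$; the relevant representation- and invariant-theoretic computations are the analogues of those for $L_2(7)$, $A_6$ and $M_9$ and are carried out explicitly in \cite{kf}. Any further invariant sextic is singular or reducible and therefore not the branch locus of a K3 double cover. Finally, exactly as in the earlier cases, one checks that the degree-two central extension $E$ of $T_{48}$ acting on the double cover $X\to\mathbb P_2$ splits as $E=E_\mathrm{symp}\times C_2=T_{48}\times C_2$ with $C_2$ generated by the antisymplectic covering involution: the generators of $T_{48}$ admit determinant-one lifts to $\mathrm{SL}_3(\mathbb C)$ which, linearized at a point of $\mathbb P_2$ off the sextic, act symplectically on $X$, so the whole action lifts symplectically. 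Hence $(X,T_{48})$ is equivariantly isomorphic to Mukai's $T_{48}$-example and $Y=\mathbb P_2$, as claimed.
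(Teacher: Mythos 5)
Your reduction to the three numerical possibilities $(m,g(B))\in\{(0,10),(4,6),(8,2)\}$ is exactly the paper's starting point, and your elimination of $(4,6)$ by counting fixed points of the center $Z=\{\pm1\}$ on the branch curve (two $Z$-fixed points on each of four disjoint Mori fibers, giving $|\mathrm{Fix}_B(Z)|=8$ and the parity contradiction $e(B/Z)=-1$) is a correct alternative to what the paper does. But your treatment of $(8,2)$ has a genuine gap, and you say so yourself: when the eight Mori fibers form a single orbit whose stabilizer is $S_3$ (which need not contain $Z$), no contradiction is produced, and the two escape routes you offer do not work. A ``sharper count of the fixed loci of $Z$ and of the Sylow $3$-subgroups'' is not carried out, and the appeal to Example 11b is circular: that a smooth genus-two branch curve appears over the degree-one Del Pezzo surface in the eventual classification cannot be used to exclude a genus-two configuration over $\mathbb P_2$ while the classification is being proved. (A smaller inaccuracy: in the $4+4$ subcase you claim sixteen $Z$-fixed points on $B$, but second-step Mori fibers viewed in $Y$ may be $(-2)$-curves disjoint from $B$ by Remark \ref{self-int of Mori-fibers}, so they contribute nothing; the first orbit's eight points already suffice there, since $g(B)=2$ would force $e(B/Z)=3$.)

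The paper closes precisely the case you could not, and by a different mechanism: it does not try to kill $m=4$ and $m=8$ by fixed-point counting at all. Instead it observes that in either case the image $M(B)\subset\mathbb P_2$ under the Mori reduction would be a $T_{48}$-invariant \emph{singular} sextic (for $m=8$ one uses $B^2=4$ and the fact that $M(B)^2$ must be a perfect square, whence $M(B)^2=36$ and some fibers meet $B$ twice), while the invariant-theoretic computation shows that \emph{every} $T_{48}$-invariant sextic is a member of the smooth pencil $C_\lambda=\{x_1x_2(x_1^4-x_2^4)+\lambda x_3^6=0\}$; the contradiction forces $Y=Y_\mathrm{min}$ and simultaneously identifies $B$. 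Note this also corrects your closing assertion that ``any further invariant sextic is singular or reducible'': there are no further invariant sextics, and their smoothness is exactly the lever of the paper's argument. Your identification of the unique $\mathbb P_2$-action and the symplectic lifting of $T_{48}$ to the double cover agree with the paper (Remark \ref{T48 symplectic}), so the proposal becomes complete once the $(8,2)$, $S_3$-stabilizer subcase is handled, e.g.\ by switching to the paper's smooth-pencil argument.
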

\begin{proof}
Let $ M: Y \to \mathbb P_2$ denote a Mori reduction of $Y$ and let $B \subset Y$ be the branch curve of the covering $X \to Y$. 
If $Y = Y_\mathrm{min}$, then $B= M(B)$ is a smooth sextic curve.
If $Y \neq Y_\mathrm{min}$, then the Euler characteristic formula with $m \in \{4,6,8\}$ shows that $g(B) \in \{2,4,6\}$. The case $m=6$, $g(B) =4$ has been excluded by the previous lemma.

If $m=4$, then the stabilizer group of each Mori fiber is $D_{12}$ and each Mori fiber meets $B$ in two points. 
The image $M(B)$ of $B$ in $Y_\mathrm{min}$ is an irreducible singular sextic.

If $m=8$, then $g(B) =2$ and $B^2 =4$. Since the self-intersection number $M(B)^2$ must be a square, one checks that all possible invariant configurations of Mori fibers yield $M(B)^2 =36$ and involve Mori fibers meeting $B$ in two points. In particular, also in this case $M(B)$ must be a singular sextic.

By explicit computation using the group structure of $T_{48}$ one can determine the unique action of $T_{48}$ on $\mathbb P_2$. This is carried out in detail in Section 4.10 in \cite{kf}.
Using the explicit form of the $T_{48}$-action and the fact that the commutator subgroup of $T_{48}$ is $Q_8 \rtimes C_3$ one then checks that any invariant curve of degree six is of the form 
\[
C_\lambda = \{ x_1x_2(x_1^4-x_2^4) + \lambda x_3^6 =0\}
\]
for some $\lambda \in \mathbb C^*$.
In order to avoid this calculation, one can also argue that the polynomial $x_1x_2(x_1^4-x_2^4)$ is the lowest order invariant of the octahedral group $S_4 \cong T_{48} / Z$. 
The curve $C_\lambda$ is smooth and it follows that $Y = Y_\mathrm{min}$.
We may adjust the coordinates equivariantly such that $\lambda =1$ and find that our surface $X$ is precisely Mukai's $T_{48}$-example.
\end{proof}
\begin{remark}\label{T48 symplectic}
As stated by Mukai, the action of $T_{48}$ on $\mathbb P_2$ does indeed lift to a symplectic action of $T_{48}$ on the double cover of $\mathbb P_2$ branched along the invariant curve $\{ x_1x_2(x_1^4-x_2^4) +  x_3^6 =0\}$. The elements of the commutator subgroup can be lifted to symplectic transformation on the double cover $X$. 
The remaining generator $d$ 
is an involution fixing the point $[0:0:1]$. Any involution $\tau$ with a fixed point $p$ outside the branch locus can be lifted to a symplectic involution on the double cover $X$ as follows:
The linearized action of $\tau$ at $p$ has determinant $\pm 1$. We consider the lifting $\tilde \tau$ of $\tau$ fixing both points in the preimage of $p$. Its linearization coincides with the linearization on the base and therefore also has determinant $\pm 1$. 
In particular, $\tilde \tau$ is an involution. 
It follows that either $\tilde \tau$ or the second choice of a lifting $\sigma \tilde \tau$ acts symplectically on $X$. 
The group generated by all lifted automorphisms is either isomorphic to $T_{48}$ or to its degree two central extension $E$
acting on the double cover.
Since $E_\mathrm{symp} \neq E$ the later is impossible it follows that $E$ splits as $E_\mathrm{symp} \times C_2$ with $E_\mathrm{symp} = T_{48}$.
\end{remark}
Finally, we return to the remaining possibilities $e(Y_\mathrm{min}) \in \{4, 9, 10, 11\}$.
\begin{lemma}
$e(Y_\mathrm{min}) \not\in \{4,9,10\}$.
\end{lemma}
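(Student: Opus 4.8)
The plan is to set $g:=g(B)$ and to pit the Euler characteristic formula \eqref{eulerchar} against the restrictions on $g$ already established, handling the single genuinely geometric value $e(Y_\mathrm{min})=4$ by an intersection computation on the relevant Hirzebruch surface.

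First I would collect the inputs. By Lemma \ref{no rat T48} there are no rational curves in $\mathrm{Fix}_X(\sigma)$, so $n=0$, and (as noted just before this lemma) $\mathrm{Fix}_X(\sigma)$ is then a single smooth branch curve $B$ with $g\ge 2$ and, by the previous lemma, $g\notin\{3,4\}$. Setting $n=0$ in \eqref{eulerchar} gives $g=13-e(Y_\mathrm{min})-m$, and I would also use the fact recalled in the text that $m=0$ or $m\ge 4$ (a Mori fibre has stabilizer of order at most twelve, so its $G$-orbit has length at least four).

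Next I would dispatch $e(Y_\mathrm{min})\in\{9,10\}$ directly: for $e(Y_\mathrm{min})=9$ one gets $g=4-m$, so $g\ge 2$ forces $m=0$ and $g=4$; for $e(Y_\mathrm{min})=10$ one gets $g=3-m$, forcing $m=0$ and $g=3$. Both contradict $g\notin\{3,4\}$. For $e(Y_\mathrm{min})=4$, by Lemma \ref{not two} this must be the case $Y_\mathrm{min}=\Sigma_k$ with $k\ge 3$, and here I would proceed as follows. Write $F$ for a fibre and $E_\infty$ for the section of $\Sigma_k\to\mathbb P_1$ with $E_\infty^2=-k$, so that $-2K_{\Sigma_k}\sim 4E_\infty+(2k+4)F$, and let $M\colon Y\to\Sigma_k$ be the Mori reduction. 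The branch curve $B\sim -2K_Y$ is irreducible of genus $\ge 2$, hence is not contracted by $M$, so $M(B)$ is an irreducible curve with $[M(B)]=M_*(-2K_Y)=-2K_{\Sigma_k}$. From $M(B)\cdot F=4\ne 1=E_\infty\cdot F$ one sees $M(B)\ne E_\infty$, whence $M(B)\cdot E_\infty\ge 0$; but $(-2K_{\Sigma_k})\cdot E_\infty=4-2k\le -2$ since $k\ge 3$, a contradiction. This rules out $e(Y_\mathrm{min})=4$ as well.

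I expect the only real subtlety to be the last case: one must make sure, after the sequence of blow-downs, that the image $M(B)$ has class exactly $-2K_{\Sigma_k}$ and is genuinely distinct from the negative section, so that the forced non-negativity of $M(B)\cdot E_\infty$ really does clash with the computed value $4-2k<0$. The values $9$ and $10$ are eliminated purely by the Euler-characteristic count together with the already-established facts $n=0$, $g\ge 2$, $g\notin\{3,4\}$ and the dichotomy $m=0$ or $m\ge 4$.
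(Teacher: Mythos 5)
Your proof is correct and follows essentially the same route as the paper: the values $9$ and $10$ are eliminated by the Euler-characteristic formula combined with $n=0$, $g\ge 2$, $g\notin\{3,4\}$ and the dichotomy $m=0$ or $m\ge 4$, while $e(Y_\mathrm{min})=4$ (the case $Y_\mathrm{min}=\Sigma_k$, $k>2$) is killed by pushing $B\sim -2K_Y$ forward and computing $M(B)\cdot E_\infty=2(2-k)<0$. The only cosmetic difference is how the final contradiction is phrased: you rule out $M(B)=E_\infty$ via the fibre degree, whereas the paper notes that the negative intersection would force $E_\infty$ to be a component of $M(B)$, contradicting Lemma \ref{no rat T48}; both are valid.
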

\begin{proof}
Recalling that the genus of the branch curve $B$ is neither three nor four and that $m$ is either zero or $\geq 4$, we may exclude $e(Y_\mathrm{min}) =9,10$ using the Euler characteristic formula $12 = e(Y_\mathrm{min}) +m +g-1$. It remains to consider the case $Y_\mathrm{min} = \Sigma _k$ with $k >2$ and we claim that this is impossible.

Let $M: Y \to Y_\mathrm{min} = \Sigma_k$ denote a (possibly trivial) Mori reduction of $Y$. The image $M(B)$ of $B$ in $\Sigma _k$ is linearly equivalent to $-2K_{\Sigma _k}$. Now $M(B) \cdot E_\infty = 2(2-k) < 0$ and it follows that $M(B)$ contains the rational curve $E_\infty$. This is a contradiction since $B$ does not contain any rational curves by Lemma \ref{no rat T48}.
\end{proof}
In the last remaining case, i.e., $e(Y_\mathrm{min})=11$, the quotient surface $Y$ is a $G$-minimal Del Pezzo surface of degree 1. Consulting \cite{dolgachev}, Table 10.5, we find that $Y$ is a hypersurface in weighted projective space $\mathbb P(1,1,2,3)$ defined by the weighted homogeneous equation
$x_1x_2(x_1^4-x_2^4)+ x_3^3+x_4^2$.
This follows from the invariant theory of the group $S_4 \cong T_{48}/Z$ and fact that the anticanonical map realizes  $Y$ as a double cover of a quadric cone $Q$ in $\mathbb P_3$ branched along the intersection of $Q$ with a cubic hypersurface.

The linear system of the anticanonical divisor $K_Y$ has precisely one base point $p$. In coordinates $[x_1:x_2:x_3:x_4]$ this point is given as $[0:0:1:i]$. It is fixed by the action of $T_{48}$. The linearization of $T_{48}$ at $p$ is given by the unique faithful 2-dimensional represention of $T_{48}$. 
It follows that there is a unique action of $T_{48}$ on $Y$. The branch curve $B$ is linearly equivalent to $-2K_Y$, i.e., $B = \{s=0\}$ for a section $s \in \Gamma(Y, \mathcal O(-2K_Y))$ which is either invariant or semi-invariant.

The adjunction formula for hypersurfaces in weighted projective space (cf.\ Theorem 3.3.4 in \cite{DolgWPS}) yields $\mathcal O (-2K_Y)) = \mathcal O _Y(2)$. The four-dimensional space of sections $\Gamma(Y, \mathcal O(-2K_Y))$ is generated by the weighted homogeneous polynomials $x_1^2, x_2^2, x_1x_2, x_3$. 
We consider the map $ Y \to \mathbb P (\Gamma(Y, \mathcal O(-2K_Y))^*)$ associated to $|-2K_Y|$. Since this map is equivariant with respect to $\mathrm{Aut}(Y)$, the fixed point $p$ is mapped to a fixed point in $\mathbb P (\Gamma(Y, \mathcal O(-2K_Y))^*)$. It follows that the section corresponding to the homogeneous polynomial $x_3$ is invariant or semi-invariant with respect to $T_{48}$. It is the only section of $\mathcal O(-2K_Y)$ with this property since the representation of $T_{48}$ on the span of $x_1^2, x_2^2, x_1x_2$ is irreducible.
It follows that $B = \{x_3 =0\}$.

In order to prove existence, let $b: Y \to \mathbb P_2$ be the blow down of eight disjoint (-1)-curves in $Y$. The curve $C = b(B)$ is a singular sextic. The double cover $X$ of $Y$ branched along $B$ is the minimal desingularization of the double cover $X_\mathrm{sing}$ of $\mathbb P_2$ branched along $C$. In particular, one finds that $X$ is a K3-surface. 
It remains to check that the action of $T_{48}$ on $Y$ lifts to a group of symplectic transformation on $X$. First note that $B$ does not contain the base point $p$. 
For $I,J,K, c \in T_{48}$ we choose liftings $ \overline I, \overline J, \overline K , \overline c \in \mathrm{Aut}(X)$ fixing both points in $\pi^{-1}(p)= \{p_1,p_2\}$. The linearization of $ \overline I, \overline J, \overline K , \overline c$ at $p_1$ is the same as the linearization at $p$ and in particular has determinant one. 
By the general considerations in Remark \ref{T48 symplectic} the involution $d$ can be lifted to a symplectic involution on $X$.
The symplectic liftings of $I,J,K,c,d$ generate a subgroup $\tilde G$ of $\mathrm{Aut}(X)$ which is isomorphic to either $T_{48}$ or to the central degree two extension of $T_{48}$ acting on $X$. 
In analogy to Remarks \ref{M9 symplectic} and \ref{T48 symplectic} we conclude that $\tilde G \cong T_{48}$ and the action of $T_{48}$ on $Y$ induces a symplectic action of $T_{48}$ on the double cover $X$.

This completes the classification of K3-surfaces with $T_{48} \times C_2$-symmetry. We have shown:
\begin{proposition}
 Let $X$ be a K3-surface with a symplectic action of the group $T_{48}$ centralized by an antisymplectic involution $\sigma$ with $\mathrm{Fix}_X(\sigma) \neq \emptyset$. Then $X$ is equivariantly isomorphic either to Mukai's $T_{48}$-example or to the double cover of 
$\{x_1x_2(x_1^4-x_2^4)+ x_3^3+x_4^2=0\} \subset \mathbb P(1,1,2,3)$
branched along $\{x_3=0\}$
\end{proposition}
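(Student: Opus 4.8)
The plan is to assemble the case analysis carried out in the preceding lemmas of this subsection. First I would invoke Lemma \ref{not two} to reduce a $G$-minimal model $Y_\mathrm{min}$ of $Y = X/\sigma$ to one of the cases $Y_\mathrm{min} \cong \mathbb P_2$, $Y_\mathrm{min} = \Sigma_k$ with $k > 2$, or $e(Y_\mathrm{min}) \geq 9$; since the only degree-one Del Pezzo surface has $e = 11$ and no Del Pezzo surface has $e > 11$, this means $e(Y_\mathrm{min}) \in \{3,4,9,10,11\}$. Next, Lemma \ref{no rat T48} guarantees that $\mathrm{Fix}_X(\sigma)$ contains no rational curves, so by Nikulin's description it is a single smooth curve $B$ carrying an effective $T_{48}$-action (the two-elliptic-curve alternative is impossible because $T_{48}$ acts faithfully), hence $g(B) \geq 2$, and the genus lemma above further excludes $g(B) \in \{3,4\}$.

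I would then feed these constraints into the Euler characteristic formula $12 = e(Y_\mathrm{min}) + m + g(B) - 1$, together with the fact (from the proof of Theorem \ref{roughclassi}) that a Mori fiber has stabilizer of order at most $12$, so that $m = 0$ or $m \geq 4$. This immediately kills $e(Y_\mathrm{min}) \in \{9,10\}$, and the case $e(Y_\mathrm{min}) = 4$ is excluded by the argument that $M(B) \cdot E_\infty = 2(2-k) < 0$ forces the rational curve $E_\infty$ to lie in $M(B)$, contradicting the absence of rational branch curves. For $e(Y_\mathrm{min}) = 3$ I would appeal to the lemma identifying $Y = Y_\mathrm{min} = \mathbb P_2$ with branch curve $\{x_1 x_2(x_1^4 - x_2^4) + x_3^6 = 0\}$, i.e.\ Mukai's $T_{48}$-example, whose symplectic liftability is Remark \ref{T48 symplectic}.

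The remaining case $e(Y_\mathrm{min}) = 11$ is the heart of the matter, where $Y$ is a $G$-minimal Del Pezzo surface of degree one. Using Dolgachev's Table 10.5 and the invariant theory of $S_4 \cong T_{48}/Z$ — via the presentation of $Y$ through its anticanonical map as a double cover of a quadric cone branched along a cubic section — I would identify $Y$ with the hypersurface $\{x_1 x_2(x_1^4 - x_2^4) + x_3^3 + x_4^2 = 0\} \subset \mathbb P(1,1,2,3)$. I would then pin down the $T_{48}$-action by linearizing at the unique base point $p = [0:0:1:i]$ of $|-K_Y|$, which must carry the faithful $2$-dimensional representation of $T_{48}$, so the action on $Y$ is unique. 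Since $\mathcal O(-2K_Y) = \mathcal O_Y(2)$ has sections spanned by $x_1^2, x_2^2, x_1 x_2, x_3$, equivariance of the associated map and irreducibility of the $T_{48}$-module spanned by $x_1^2, x_2^2, x_1 x_2$ force the (semi-)invariant branch section to be $x_3$, so $B = \{x_3 = 0\}$. Existence — that the double cover is a K3-surface, realized as the minimal desingularization of a singular-sextic double cover of $\mathbb P_2$ — and symplectic liftability of the whole group (rather than only its central extension) are then checked exactly as in Remarks \ref{M9 symplectic} and \ref{T48 symplectic}.

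The step I expect to be the main obstacle is precisely this degree-one case: establishing that the weighted hypersurface above is the only degree-one $G$-minimal Del Pezzo with a $T_{48}$-action, controlling that action, and carrying out the final check that the liftings of $I,J,K,c,d$ generate a copy of $T_{48}$ inside $\mathrm{Aut}(X)$ and not its degree-two central extension.
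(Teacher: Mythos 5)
Your proposal is correct and follows essentially the same route as the paper: assembling Lemmas \ref{not two} and \ref{no rat T48}, the genus exclusion, and the Euler characteristic formula to reduce to $e(Y_\mathrm{min})\in\{3,11\}$, handling $e=3$ via the $\mathbb P_2$ lemma and Remark \ref{T48 symplectic}, and treating the degree-one Del Pezzo case exactly as the paper does (Dolgachev's Table 10.5, linearization at the base point of $|-K_Y|$, the decomposition of $\Gamma(Y,\mathcal O(-2K_Y))$ forcing $B=\{x_3=0\}$, and the liftability check as in Remarks \ref{M9 symplectic} and \ref{T48 symplectic}). No substantive differences from the paper's argument.
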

\begin{remark}
The automorphism group of the Del Pezzo surface $Y = \{x_1x_2(x_1^4-x_2^4)+ x_3^3+x_4^2=0\} \subset \mathbb P(1,1,2,3)$ is the trivial central extension $C_3 \times T_{48}$.  By contruction, the curve $B=\{s=0\}$ is invariant with respect to the full automorphism group. The double cover $X$ of $Y$ branched along $B$ carries the action of a finite group $\tilde G$ of order $2 \cdot 3 \cdot 48 = 288$ containing $T_{48} < \tilde G_\mathrm{symp}$. Since $T_{48}$ is a maximal group of symplectic transformations, we find $T_{48} = \tilde G_\mathrm{symp}$.
In analogy to the proof of Claim 2.1 in \cite{OZ168}, one can check that 288 is the maximal order of a finite group $H$ acting on a K3-surface with $T_{48} < H_\mathrm{symp}$. It follows that $\tilde G$ is a maximal finite subgroup of $\mathrm{Aut}(X)$.
\end{remark}
\subsection*{Conclusion}
The classification and non-existence results for the individual groups from Table \ref{TableMukai} obtained in the preceding subsections yield the proof of Theorem \ref{thm mukai times invol}.
%
%
%%%%%%%%%%%%%%%%%%
%%% REFERENCES %%%
%%%%%%%%%%%%%%%%%%
%
\providecommand{\bysame}{\leavevmode\hbox to3em{\hrulefill}\thinspace}

\end{document}